\newcommand{\setdef}[1]{{\left\{ #1 \right\}}}
\DeclareMathOperator{\Sp}{Sp}
\DeclareMathOperator{\Hom}{Hom}
\DeclareMathOperator{\Spec}{Spec}
\renewcommand{\O}{\operatorname{O}}
\DeclareMathOperator{\Id}{Id}
\def\H{\mathbb{H}}
\def\C{\mathbb{C}}
\def\F{\mathbb{F}}
\def\intersect{\cap}
\def\bigintersect{\bigcap}
\def\isomorphic{\cong}
\def\directsum{\oplus}
\def\bigdirectsum{\bigoplus}
\newcommand{\tensor}[1][\@nil]{%
  \def\tmp{#1}%
   \ifx\tmp\@nnil
        \mathbin{\otimes}
    \else
         \mathbin{\otimes}_{#1}
    \fi}
\def\comp{\circ}
\def\ideal{\mathbin{\triangleleft}}
\newtheorem{theorem}{Theorem}
\newtheorem{lemma}[theorem]{Lemma}
\newtheorem{corollary}[theorem]{Corollary}
\newtheorem{proposition}[theorem]{Proposition}
\theoremstyle{remark}
\newtheorem{definition}[theorem]{Definition}
\newtheorem{remark}[theorem]{Remark}
\let\orig@hyper@refstepcounter\hyper@refstepcounter%
\newcommand{\thmcountername}{theorem}
\renewcommand{\hyper@refstepcounter}[1]{%
  \begingroup
  \def\temp@a{#1}%
  \edef\temp@b{\thmcountername}
  \ifx\temp@a\temp@b
  \ifx\@currenvir\temp@b
  \orig@hyper@refstepcounter{#1}%
  \else
  \expandafter\let\csname c@\@currenvir\expandafter\endcsname\csname c@\thmcountername\endcsname
  \orig@hyper@refstepcounter{\@currenvir}%
  \fi
  \else
  \orig@hyper@refstepcounter{#1}%
  \fi
  \endgroup
}
\newcommand{\theolabel}[1]{%
  \label[\@currenvir]{#1}%
}
\tikzstyle{halftqft}=[tqft/cobordism height=1cm,tqft/circle width=5pt,tqft/circle depth=2.5pt,tqft/boundary separation=1cm]
\tikzstyle{minitqft}=[tqft/cobordism height=1.2em,tqft/circle width=5pt,tqft/boundary separation=17.5pt,tqft/circle depth=1pt]
\tikzstyle{tinytqft}=[tqft/cobordism height=0.6em,tqft/circle width=2.5pt,tqft/boundary separation=8.75pt,tqft/circle depth=0.5pt]
\tikzset{%
    tqft/cobordism height=3em,
    tqft/circle width=5pt,
    tqft/boundary separation=25pt
}
\title{Extended TQFTs and Algebraic Geometry}
\author{Peter Banks}
\DeclareMathOperator{\Ext}{Ext}
\DeclareMathOperator{\Ev}{Ev}
\DeclareMathOperator{\Bord}{Bord}
\DeclareMathOperator{\jacobsonradical}{Jac}
\DeclareMathOperator{\Specm}{Specm}
\renewcommand{\O}{\mathcal{O}}
\newcommand{\Od}{\mathcal{O}_\Delta}
\newcommand{\Odd}{\mathcal{O}_{\Delta_3}}
\newcommand{\Ox}{\mathcal{O}_X}
\newcommand{\Oy}{\mathcal{O}_Y}
\newcommand{\Oxx}{\mathcal{O}_{X \times X}}
\renewcommand{\c}{\mathscr{C}}
\renewcommand{\d}{\mathscr{D}}
\newcommand{\e}{\mathscr{E}}
\newcommand{\f}{\mathscr{F}}
\newcommand{\g}{\mathscr{G}}
\newcommand{\p}{\mathscr{P}}
\newcommand{\q}{\mathscr{Q}}
\renewcommand{\r}{\mathscr{R}}
\newcommand{\x}{\mathscr{X}}
\renewcommand{\C}{\mathcal{C}}
\newcommand{\D}{\mathcal{D}}
\renewcommand{\F}{\mathcal{F}}
\newcommand{\Sh}{\mathcal{SH}}
\newcommand{\catname}[1]{{\normalfont\textbf{#1}}}
\newcommand{\Var}{{\mathcal{V\mkern-6mu a\mkern-2mu r}}}
\newcommand{\AffVar}{{\mathcal{A\mkern-4mu f\mkern-4mu f \mkern-2mu V\mkern-6mu a\mkern-2mu r}}}
\newcommand{\Rmod}{R\catname{-mod}}
\newcommand{\Smod}{S\catname{-mod}}
\newcommand{\Alg}{{\mathcal{A \mkern-3mu l \mkern-3mu g}}}
\newcommand{\ABBimod}{A\catname{-}B\catname{-bimod}}
\newcommand{\ABmod}{(A \tensor_k B)\catname{-mod}}
\newcommand{\DABBimod}{\D(\ABmod)}
\newcommand{\DAlg}{{\Alg_d}}
\renewcommand{\H}{\catname{H}}
\newcommand{\HC}{\catname{H}\C}
\newcommand{\HBord}{\catname{H}\Bord}
\newcommand{\HVar}{\catname{H}\Var}
\newcommand{\AlgProjFn}{\Psi}
\newcommand{\AlgInclFn}{\Phi}
\newcommand{\sheaffunctor}{{\widetilde{\hphantom{M}}}}
\newcommand{\pt}{\setdef{\text{pt}}}
\newcommand{\boxtensor}{\boxtimes}
\newcommand{\m}{\mathfrak{m}}
\newcommand{\n}{\mathfrak{n}}
\newcommand{\bordism}[2][tqft]{%
\begin{tikzpicture}[#1,baseline=(baseline)]%
	\node[tqft/#2,draw] (obj0) {};%
\ifthenelse{\equal{#2}{cup}}
{\coordinate (baseline) at ([yshift=0.2*\pgfkeysvalueof{/pgf/tqft/cobordism height}]obj0.west);}
{
\ifthenelse{\equal{#2}{cap}}
{\coordinate (baseline) at (obj0.outgoing boundary 1);}
{\coordinate (baseline) at (obj0.west);}
}
\end{tikzpicture}%
}
\newcommand{\bordismtensor}[3][tqft]{%
\begin{tikzpicture}[#1,baseline=(baseline)]%
	\node[tqft/#2,draw] (obj0) {};%
	\node[tqft/#3,draw,right=1em of obj0] (obj1) {};%
\ifthenelse{\equal{#2}{cup}}
{\coordinate (baseline) at ([yshift=0.2*\pgfkeysvalueof{/pgf/tqft/cobordism height}]obj0.west);}
{
\ifthenelse{\equal{#2}{cap}}
{\coordinate (baseline) at (obj0.outgoing boundary 1);}
{\coordinate (baseline) at (obj0.west);}
}
\end{tikzpicture}%
}
\newcommand{\bordismtwo}[3][tqft]{%
\begin{tikzpicture}[#1,baseline={([yshift=-0.3em]obj0.outgoing boundary 1)}]%
	\node [tqft/#2,draw] (obj0) {}; %
	\node [tqft/#3,draw,anchor=incoming boundary 1] (obj1) at (obj0.outgoing boundary 1) {};
\end{tikzpicture}
}
\newcommand{\bordismbox}[1][tqft]{%
\begin{tikzpicture}[#1,baseline=(baseline)]
	\node [tqft/cap] (cap0) {};
	\node [tqft/cup,anchor=incoming boundary 1] (cup0) at (cap0.outgoing boundary 1) {};
	\node [coordinate] (topleft)  at ([xshift=-6pt]cap0.north) {};
	\node [coordinate] (botright) at ([xshift=6pt]cup0.south) {};
	\node [fit=(topleft)(botright),dotted,draw,rectangle] (box) {};
	\coordinate (baseline) at ([yshift=-0.4em]cap0.outgoing boundary 1);
\end{tikzpicture}
}
\newcommand{\bordismcapcup}{%
\begin{tikzpicture}[minitqft,tqft/cobordism height=0.6em,baseline=(baselinecoord)]%
	\node[tqft/cup,draw] (cup) {};%
	\node[tqft/cap,anchor=incoming boundary 1,draw] (cap) at (cup.outgoing boundary 1) {};%
	\coordinate (baselinecoord) at ([yshift=-3pt] cap.incoming boundary 1);
\end{tikzpicture}%
}
\newcommand{\bordismcopants}{%
\begin{tikzpicture}[minitqft,baseline=(xxx)]%
	\node[tqft/reverse pair of pants,draw] (cpants0) {};
	\coordinate (xxx) at ([yshift=-3pt]cpants0.west);
\end{tikzpicture}
}
\begin{document}
\maketitle
\begin{abstract}
We study a potential method for constructing the Rozansky--Witten TQFT as an extended $(1+1+1)$-TQFT. We construct a $2$-category consisting of schemes, complexes of sheaves and sheaf morphisms and show that there are $(1+1)$-TQFTs valued in the truncation of this category which have state spaces that agree with the Rozansky--Witten TQFT. However, we also show that if such a TQFT is based on a reduced Noetherian scheme, it cannot be extended upwards to a $(1+1+1)$-TQFT.
\end{abstract}

\section{Introduction}

After their formalisation by Atiyah\cite{Atiyah1988}, TQFTs have proven to be a rich area of study in mathematics. They have been used to categorify previously known invariants, such as the Jones polynomial, and also give new $3$-manifold invariants. Part of their power comes from allowing intuition about physical systems to be applied to mathematics. Indeed, many TQFTs are defined using path integrals, a notion from physics whereby an invariant of the manifold $M$ is defined by performing an integral over the space of all fields $M \to X$ for some target space $X$.

This was the method used by Rozansky and Witten in defining their TQFT~\cite{RozanskyWitten1997invariants}. They in fact construct a family of TQFTs, parameterised by a hyperk{\"a}hler manifold $X$ (although this has since been generalised by Kontsevich~\cite{Kontsevich1999} and Kapranov~\cite{Kapranov1999} to any sympletic holomorphic manifold). The TQFT itself is a twisted $N=6$ sigma model.

Analysing this TQFT can be used to generate results in different areas of mathematics depending on one's viewpoint. Since the TQFT is parameterised by hyperk{\"a}hler manifolds, it can be used to give insight into geometric problems. It can also be approached from a topological point of view, insofar as it gives a $3$-manifold invariant. Finally, it can also be considered from the original physical viewpoint. 

Since the TQFT is constructed based on a path integral, it is only well-defined at a physical level of rigour. Despite this, many results have been proven: for example, it has been used to derive a formula relating the $\hat{A}$-genus of a hyperk{\"a}hler manifold (a topological property) to its curvature (a geometric property).

In their original paper, Rozansky and Witten give a description of the state spaces of the genus $0$- and $1$-surfaces, and conjecture a general formula. Let $\Sigma_g$ denote the genus $g$ surface. Their conjecture is the formula:
\[ Z(\Sigma_g) = \bigdirectsum_{q=0}^{\dim_\mathbb{C}(X)} H^q_{\overline{\partial}}\left(X, (\wedge T^{1,0} X)^{\tensor g}\right) \,. \]
For any $(n+1)$-TQFT, the space $A = Z(S^n)$ is given the structure of a finite-dimensional commutative Frobenius algebra; we say that $Z$ is \emph{based} on $A$. The results proved in their paper show that the Rozansky--Witten TQFT is based on the algebra $k[\alpha]/\alpha^2$. In itself, this is of some interest. The TQFTs constructed by the Tureav--Viro method must be decomposable into a direct sum of TQFTs based on $k$, so this would give an example of a TQFT not constructed from that method. Furthermore, by the classification result of Bartlett, Douglas, Schommer-Pries and Vicary \cite{BDSV2015}, such a TQFT would be an example of a TQFT that could not be extended to produce a $(1+1+1)$-TQFT valued in $\catname{2Vect}$.

There have been many attempts to give a rigorous construction of this TQFT. The work of Kapustin, Rozansky and Saulina \cite{KRS2009} approaches the problem by attempting to construct the category of boundary conditions of the TQFT. There have also been attempts to formalise the TQFT within the BV-formalism \cite{KCQ2017}.

Instead, the focus of this paper is the conjectured construction given by Roberts and Willerton \cite{RobertsWillerton2003}. They conjecture that the Rozansky--Witten TQFT can be viewed as an extended $(1+1+1)$-TQFT valued in a $2$-category with objects given by derived categories of sheaves, $1$-morphisms functors between these categories and $2$-morphisms natural transformations. In this TQFT, the pair-of-pants bordism is sent to the functor given by taking the tensor product with the diagonal sheaf and taking the inverse image. 

Rather than use this category, we chose our target category to be $\Var$, which has objects given by schemes and $\Hom$-categories given by derived categories of sheaves of modules. A sheaf of modules $\e \in \Sh(X \times Y)$ can be viewed as a functor via the associated Fourier--Mukai transform
\begin{align*}
	\Phi_\e\colon \Sh(X) &\to \Sh(Y) \,, \\
		\f &\mapsto \pi_{Y*}(\pi^*_{X}(\f) \tensor \e) \,.
\end{align*}
The composition law of $\Var$ is chosen to respect this interpretation as a functor: that is, so that $\Phi_{\f \comp \e} \isomorphic \Phi_\f \comp \Phi_\e$. This situation is analogous to the category of algebras, bimodules and morphisms; in fact, we will see this category is equivalent to the full subcategory of affine schemes of $\Var$.

Using this interpretation of sheaves as functors, a subcategory of the original category considered by Roberts and Willerton can be recovered. In particular, the tensor product and inverse image functors can be represented as Fourier--Mukai transforms, so we can construct the TQFTs that they describe.

The benefits of using this category is that it allows us to define a symmetric monoidal structure. We take the tensor product of two sheaves $X \tensor Y$ to be the product $X \times Y$; we then take the product of two schemes to be their external product (and likewise for $2$-morphisms). We will show in \autoref{sec:VarConstruction} that this is indeed a symmetric monoidal $2$-category.

Having shown this result, we will then show that for any scheme $X$, there are  Frobenius algebra objects in the truncated category $\HVar$ as described by Roberts and Willerton (\autoref{prop:diagonaltqft}). These have unit and counit morphisms given by the structure sheaf $\Ox$; the multiplication and comultiplication are given by the triagonal sheaf $i^1_{123*}(\Ox)$, where $i^1_{123}\colon X \to X^3$ is the unique map determined by the property $\pi^{123}_{i} \comp i^1_{123} = \Id_X$, for $i = 1, 2, 3$. We denote the associated $(1+1)$-TQFT by $Z_X$. In certain cases, we can calculate that the state space associated to the genus $g$ surface by this TQFT is isomorphic to that conjectured by Rozansky and Witten (\autoref{prop:PropertyDStateSpaceCalculation}).

However, these TQFTs cannot, in general, be extended upwards to $(1+1+1)$-TQFTs. This is the main result of this paper.

\begin{theorem}
\label{thm:ETQFTDiscrete}
Let $Z$ be a $(1+1+1)$-TQFT valued in $\Var$ such that the induced $(1+1)$-TQFT corresponds is of the form $Z_X$ defined above, where $X = Z(S^1)$. If $X$ is of finite type and reduced, then it must be discrete. In this case, $Z$ is isomorphic to a direct sum of extended TQFTs, each of which sends $S^1$ to a single point.
\end{theorem}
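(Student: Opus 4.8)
The plan is to extract from the three-dimensional part of $Z$ the statement that the commutative Frobenius algebra object $X = Z(S^1)$ of $\HVar$ is \emph{separable}, and then to show that for the particular Frobenius structure carried by $Z_X$ separability already forces $X$ to be zero-dimensional. This is, one categorical level up, the analogue of the fact that a $(1+1)$-TQFT valued in $\catname{2Vect}$ extends to a $(1+1+1)$-TQFT precisely when its Frobenius algebra is semisimple. Concretely: the pair-of-pants $\mu$ and its reverse $\delta$ make $X$ into a commutative Frobenius algebra object of $\HVar$, and the three-dimensional bordisms (as in the analysis underlying the $\catname{2Vect}$ classification of~\cite{BDSV2015}) upgrade this to a \emph{separable} Frobenius algebra object; equivalently --- since $X$ is already Frobenius --- the handle operator $H := \mu\comp\delta\colon X \to X$ must be an invertible $1$-morphism of $\HVar$. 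Carrying out this first step rigorously for the target $\Var$ is the part I expect to be the main obstacle.

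Granting that, the next step is to identify $H$ for $Z_X$ by a direct computation with the triagonal sheaf. Since the composition law of $\Var$ satisfies $\Phi_{\f\comp\e}\isomorphic\Phi_\f\comp\Phi_\e$ and $\Phi_{\Id_X}\isomorphic\Id_{\D(X)}$, and since $\mu$ and $\delta$ are both given by $i^1_{123*}(\Ox)$, one reads off $\Phi_\mu\isomorphic\Delta^*$ and $\Phi_\delta\isomorphic\Delta_*$ for the diagonal $\Delta\colon X\to X\times X$; hence $\Phi_H\isomorphic\Delta^*\Delta_*$, and the projection formula gives $\Phi_H\isomorphic(-)\tensor^L\Delta^*\Delta_*\Ox$. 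Since $H$ is an invertible $1$-morphism, $\Phi_H$ is an autoequivalence of $\D(X)$, and therefore $\Delta^*\Delta_*\Ox$ is an invertible object of $(\D(X),\tensor^L,\Ox)$.

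To conclude discreteness, I would restrict to the smooth locus $U\subseteq X$, which is dense because $X$ is reduced and of finite type over $k$. Over $U$ the diagonal is a regular embedding with conormal bundle $\Omega^1_U$, so the self-intersection formula gives $\Delta^*\Delta_*\O_U\isomorphic\bigdirectsum_q\Omega^q_U[q]$. Restricting this along a closed point $x\in U$ yields $\bigdirectsum_q\wedge^q(\Omega^1_U\tensor\kappa(x))$, a graded $\kappa(x)$-vector space of total dimension $2^{\dim_x X}$; but the restriction of an invertible object of $\D(X)$ to a point is one-dimensional. Hence $\dim_x X=0$ for every $x\in U$, so $\dim X=0$, and being reduced of finite type $X\isomorphic\bigdisjointunion_{j=1}^N\Spec K_j$ for finite extensions $K_j/k$ --- that is, $X$ is discrete.

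For the last assertion, the decomposition $\Ox(X)=\prod_j K_j$ gives orthogonal central idempotents $e_j\in\operatorname{End}_{\HVar}(\Id_X)$ summing to $\Id_X$. Because coproducts of schemes act as direct sums in $\Var$ --- products distribute over them and $\D(\bigdisjointunion_jY_j)\isomorphic\prod_j\D(Y_j)$ --- and because the structure sheaves and the triagonal sheaf defining $Z_X$ respect the component decomposition of $X$, these idempotents are carried through all of $Z$ by the usual idempotent-splitting argument and split it as $Z\isomorphic\bigdirectsum_jZ_j$ with $Z_j(S^1)=\Spec K_j$ a one-point scheme; each $Z_j$ then sends $S^1$ to a point. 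As noted, the one genuinely delicate step is the first --- establishing invertibility of the handle operator from the three-dimensional bordisms inside $\Var$ --- after which the argument is a short self-intersection computation.
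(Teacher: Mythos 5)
Your reduction to ``the handle operator $H=\mu\comp\delta$ is invertible'' is the step that fails, and it is not merely the hardest step to carry out --- it is false that a $(1+1+1)$-extension forces it. What the three-dimensional layer of $\Bord_{1+1+1}$ actually provides is that the cap and cup are an \emph{adjoint} pair, with unit and counit given by surgery traces $\mu^\dagger$ and $\nu^\dagger$; the triangle identities force certain $2$-morphisms to be split monomorphisms, not any $1$-morphism to be invertible. Indeed, in the Reshetikhin--Turaev theories valued in $\catname{2Vect}$ the handle $1$-morphism acts on $Z(S^1)=\C$ as $-\tensor\bigdirectsum_i X_i\tensor X_i^*$, which is an equivalence only when $\C$ has a single simple object; so invertibility of $H$ is strictly stronger than extendability, and the analogue of the \cite{BDSV2015} obstruction is semisimplicity/separability data encoded in the unit and counit $2$-cells, not invertibility of $H$. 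Since your entire argument (invertible object of $(\D(X),\tensor,\Ox)$, restriction to a point having dimension $1$ versus $2^{\dim_x X}$) is downstream of this claim, the proof does not go through as written. The self-intersection computation in your third step is fine on the smooth locus, and your final decomposition step is fine, but they have nothing to feed on.

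The correct $3$-dimensional input, and the one the paper uses, is the unit $\mu^\dagger\colon \Id_{S^1}\Rightarrow \bordism[minitqft]{cup}\comp\bordism[minitqft]{cap}$ of the cap--cup adjunction: the triangle identity \autoref{eqn:MuNuComposition} forces $Z(\mu^\dagger)\comp Z(\bordism[minitqft]{cup})$ to be a monomorphism, hence $Z(\mu^\dagger)\neq 0$ as an element of $\Hom_\Var(\Od,\Oxx)$. One then shows by a direct computation with the diagonal bimodule over $R\tensor_k R$ (for $X=\Spec(R)$ irreducible affine with trivial Jacobson radical) that $\Hom(\Od,\Oxx)=0$ unless $R$ is a field, and globalises by restricting to irreducible affine opens $U\subset X$, using that $\Od$ is supported on the diagonal to see that the restriction of $Z(\mu^\dagger)$ to $U\times U$ remains non-zero. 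If you want to salvage a derived-category flavour closer to your computation, note that on the smooth locus Grothendieck duality gives $\Hom_{X\times X}(\Delta_*\Ox,\Ox\boxtensor\Ox)\isomorphic H^{-n}(X,\omega_X^{-1})=0$ for $n=\dim X>0$, which recovers the same vanishing; but the statement you need to contradict is the non-vanishing of a specific $2$-morphism, not the invertibility of a $1$-morphism.
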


Consequently, a $(1+1+1)$-TQFT valued in $\Var$ and based on a Noetherian reduced scheme must either truncate to a different Frobenius algebra object, or be trivial. 

Throughout this paper, we fix a field $k$; all schemes will be schemes over this field. The product of two schemes $X \times Y$ will mean the fibre product over $\Spec(k)$.

I would like to thank my supervisor Andr{\'a}s Juh{\'a}sz for his support throughout the development of this paper. I would also like to thank Andr{\'e} Henriques and Bal{\'a}zs Szendr{\"o}i for their advice and comments on the paper, and Nils Carqueville for discussions regarding the construction of the Rozansky-Witten TQFT. This project has received funding from the European Research Council (ERC) under the European Union's Horizon 2020 research and innovation programme (grant agreement No 674978).

\newcommand{\Shapes}{\catname{Sh}}
\newcommand{\Spaces}{\catname{Sp}}
\newcommand{\Darr}{\catname{D}_1}
\newcommand{\Dobj}{\catname{D}_0}
\newcommand{\fmcomp}{\odot}

\newcommand{\tikzcdsquare}[9][row sep=large,column sep=large]{
\begin{tikzcd}[#1]
	#2 \arrow[r,"{#8}"{name=E}] \arrow[d,"{#6}"] \pgfmatrixnextcell #3 \arrow[d,"{#7}"] \\
	#4 \arrow[r,"{#9}"{name=F}] \pgfmatrixnextcell #5
\end{tikzcd}
}

\newcommand{\dblver}[4][column sep=large,row sep=large]{%
\begin{tikzcd}[#1]
	#2 \arrow[d,"{#4}"] \\
	#3
\end{tikzcd}
}
\newcommand{\dblhor}[4][column sep=large,row sep=large]{
\begin{tikzcd}[#1]
	#2 \arrow[r,"{#4}"] \pgfmatrixnextcell #3
\end{tikzcd}
}
\newcommand{\dblcell}[2][column sep=large,row sep=large]{%
	\def\tempa{#2}
	\dblcellreal{#1}
}
\newcommand{\dblcellreal}[9]{
\begin{tikzcd}[#1]
	\tempa \arrow[r,"{#7}"{name=E}] \arrow[d,"{#5}"] \pgfmatrixnextcell #2 \arrow[d,"{#6}"] \\
	#3 \arrow[r,"{#8}"{name=F}] \pgfmatrixnextcell #4
\arrow[Rightarrow,from=F,to=E,"{#9}"]
\end{tikzcd}
}

\section{\texorpdfstring{$2$}{2}-category of schemes}
\label{sec:VarConstruction}

We briefly recall the description of the category $\Var$ that will serve as the target category for the TQFTs we construct. A full description of its construction, along with an introduction to the notions of $2$-categories we will use, can be found in \cite{CaldararuWillerton2010}.

The objects of $\Var$ are smooth schemes over $k$. Its hom-categories are given by
\[ \Hom_\Var(X, Y) = \D(X \times Y) \,, \]
where $\D(X \times Y)$ is the derived category of quasi-coherent sheaves of modules on $X \times Y$. Consequently, the $2$-morphisms in $\Var$ are $1$-morphisms in these derived categories, so vertical composition of these $2$-morphisms to be the usual composition in the derived category. The horizontal composition of two sheaves is defined by their composition as Fourier--Mukai kernels,
\[
	\e \fmcomp \f = \pi^{ABC}_{AC*}(\pi^{ABC*}_{AB}(\e) \tensor \pi^{ABC*}_{BC}(\f))
\]

This category can be viewed as a generalisation of the category $\Alg$, where objects are rings, $1$-morphism are bimodules and $2$-morphisms are morphisms of bimodules; indeed, this category is isomorphic to the full subcategory of affine schemes (see \autoref{sec:AffineSubcategory} for more details). 

In the following sections, we will show how this category can be endowed with the structure of a symmetric monoidal category, a necessary requirement for it to serve as the target for a TQFT. We construct a symmetric monoidal double category whose associated horizontal category is isomorphic to $\Var$; the monoidal structure of the double category then gives a monoidal structure on $\Var$.

This construction starts by considering a geometrically fibered (henceforth ``geofibered'') category \cite{Reich14}. A geofibered category includes the data of functor
\[
	F\colon \Shapes \to \Spaces
\]
from a category of shapes to a category of spaces; for any $f \in \Hom_\Spaces(A, B)$, it also gives a pair of functors
\[
	f_*\colon \Shapes_A \leftrightarrows \Shapes_B\colon f^* \,.
\]
Here, the notation $\Shapes_X$ indicates the fibre category of $\Shapes$ over the object $X$. The main focus of this paper is the case where $\Spaces$ is the category of smooth schemes and morphisms; $\Shapes$ is the category whose objects are tuples $(X, Y, \e)$, where $X, Y \in \Spaces$ and $\e \in \D(X \times Y)$; the functor $F$ sends $(X, Y, \e)$ to $X \times Y$; and $f_*$ and $f^*$ are the direct and inverse image functors respectively. However, the construction can equally be applied to other geofibered category. These include cases where morphisms are morphisms in the non-derived category of sheaves, and some types of stacks.

To give the double category its monoidal structure, it is necessary to introduce a notion of a monoidal structure for a geofibered category (see \autoref{def:MonoidalGeofiberedCat}). This structure is modelled on the external tensor product of sheaves; recall this is defined by the functor
\begin{align*}
	\boxtensor\colon \D(X) \times \D(Y) &\to \D(X \times Y) \\
		(\e, \f) &\mapsto \pi^{XY*}_X(\e) \tensor \pi^{XY*}_Y(\f) \,.
\end{align*}

\subsection{Geofibered categories}

We recall the following definitions from \cite{Reich14}.

\begin{definition}Let $f: X \to Y$ be a morphism of spaces. We call the functors $f_*$ and $f^*$ \emph{basic standard geometric functors}.

We define a \emph{standard geometric functor} to be to be any composition of basic standard geometric functors.
\end{definition}

Consider a Cartesian diagram
\[
	\tikzcdsquare{A}{B}{U}{V}{g'}{g}{f'}{f}
\]
There is a morphism $\text{bc}(f, g)\colon g^* \comp f_* \to f'_* \comp f'^*$, called the \emph{base-change morphism}, formed by applying adjunction transformations to the corner-swapping map
\[
f_* \comp (g')_* \isomorphic (f \comp g')_* = (g \comp f')_* \isomorphic g_* \comp f'_* \,.
\]
A class of morphisms $P$ is \emph{pull-geolocalizing} if:
\begin{itemize}
	\item it contains every isomorphism and is closed under composition; and
	\item for each $f \in \Sp, g \in P$, $\text{bc}(f, g)$ is an isomorphism and $g' \in P$.
\end{itemize}

An example of a pull-geolocalizing class is the collection of flat maps in the geofibered category of schemes~\cite[Proposition 3.9.5]{Lipman09}.

\subsection{Notation}

We will often represent standard geometric functors by directed graphs which are topologically linear. Each vertex is labelled by a space $X$, and each edge is labelled by a morphism $f$, from the space labelling the source vertex to the space labelling the target vertex. After choosing one of the leaves as the source, this determines a functor by taking direct image functors (when travelling in the direction of the edge) and inverse image functors (when travelling against the direction of the edge). For example, the diagram
\[ \begin{tikzcd} X & Y \arrow[l,"g"] \arrow[r,"f"] & Z \end{tikzcd} \]
represents the functor $f_* \comp g^*\colon \Sh_X \to \Sh_Z$ (when read left-to-right).

These diagrams can be stacked vertically, with labelled arrows between each layer, to represent natural transformations and compositions thereof. For example, the base-change morphism corresponding to the pullback diagram
\[
\begin{tikzcd}
    U \arrow[r, "f'"] \arrow[d, "g'"] & X \arrow[d, "g"] \\
    Y \arrow[r, "f"] & Z
\end{tikzcd}
\]
can be depicted as
\[
\begin{tikzcd}
Y \arrow[r,"f"] & Z\arrow[d, Rightarrow,"\text{bc}",red] & X \arrow[l,"g"] \\
Y & U \arrow[l,"g'"] \arrow[r,"f'"] & X
\end{tikzcd}
\]

We will assume that the category of spaces $\Spaces$ has a terminal object, denoted $\pt$ (since in the geofibered category of sheaves over smooth schemes, the terminal object $\pt = \Spec(k)$ is the scheme with a single point). The product of two spaces is then the fibre product over this space. For brevity, when considering products of spaces we will often drop the product symbol; thus the space $X \times Y$ may be denoted by $XY$. Similarly, when no ambiguity may arise, we may write $fg$ for the morphism $f \times g$; composition of morphisms will be explicitly denoted using $\comp$.

In the remainder of the paper, it will frequently be necessary to refer to projection and inclusion morphisms. We define a two families of such morphisms.

\begin{definition}
\label{def:ProjectionMorphisms}
Let $\setdef{X_i : i \in I}$ be a collection of spaces, and let $X_I = \prod X_i$. Define a family of projection morphisms, parameterised by subsets $J \subset K \subset I$, to be the projection $\pi^K_J\colon X_K \to X_J$ onto the factors denoted by the sets of indices $J$.
\end{definition}

\begin{definition}
\label{def:InclusionMorphisms}
Let $\setdef{X_i : i \in I}$ be a collection of spaces. Let $J \subset I$ and let $\setdef{J_j : j \in J}$ be a partition of $I$ such that if $j$ and $k$ are indices in the same partition, then $X_k = X_j$. Define a morphism
\[
i^J_\setdef{J_j}\colon X_J \to X_I
\]
by the property that for any $j \in J$ and $k \in J_j$, we have
\[
	\pi^I_k \comp i^J_\setdef{J_j} = \Id_{X_j} \,.
\]
For example, the morphism $i^1_{12}$ is the diagonal morphism.
\end{definition}

When using diagrams to represent functors, an unlabelled arrow may be used to represent the composition of a projection morphism and an inclusion morphism. This will only be used when the domain of such an arrow has factors that appear uniquely. The projection morphism projects onto any factor that appears in both the domain and codomain. The codomain is partitioned by grouping distinct spaces, and this partition is used to construct the inclusion morphism. For example, the diagram
\[
\begin{tikzcd} XYYZ & XYZ \arrow[l] \arrow[r] & XZ \end{tikzcd} \,,
\]
represents the composition $\pi^{124}_{14*} \comp i^{124*}_{1,23,4}$, where $X_1 = X, X_2 = X_3 = Y, X_4 = Z$.

For any space $X$, there is a unique map $\pi\colon X \to \pt$. We fix a sheaf $\O_\pt$ and define the structure sheaf over $X$ as $\Ox = \pi^*(\O_\pt)$. As an immediate consequence, there are isomorphism $\Ox \isomorphic f^*(\Oy)$ for any $f\colon X \to Y$, using the composition natural transformation.

Finally, we will sometimes use $\O_{\Delta_X}$ to denote the diagonal sheaf $\O_{\Delta_X} = i^1_{12*}(\Ox)$. If the space in question is clear, we may write this as $\Od$.

\subsection{Monoidal geofibered categories}

\begin{definition}
\label{def:MonoidalGeofiberedCat}
A monoidal structure on a geofibered category $F\colon \Shapes \to \Spaces$ is a functor
\[
	\boxtensor \colon \Shapes_X \times \Shapes_Y \to \Shapes_{XY}
\]
along with, for any morphisms $f$ and $g$ in $\Spaces$, the following natural transformations:
\begin{itemize}
	\item an associator $\alpha_\boxtensor: \boxtensor \comp (\Id \times \boxtensor) \to \boxtensor \comp (\boxtensor \times \Id)$;
	\item a left unitor transformation $i^{1*}_{12}(\e \boxtensor \Ox) \isomorphic \e$;
	\item a right unitor transformation $i^{1*}_{12}(\Ox \boxtensor \e) \isomorphic \e$;
	\item a natural transformation $\boxtensor \comp (f_* \times g_*) \isomorphic (f \times g)_* \comp \boxtensor$; and
	\item a natural transformation $\boxtensor \comp (f^* \times g^*) \isomorphic (f \times g)^* \comp \boxtensor$.
\end{itemize}
We require these to satisfy the usual unit and pentagon diagrams for a monoidal product, along with the additional requirement of compatible in the sense that the diagram
\[
\begin{tikzcd}
	\boxtensor \comp (\boxtensor \times \Id) \comp (f_* \times g_* \times h_*) \arrow[r] \arrow [d] &
		\boxtensor \comp (\Id \times \boxtensor) \comp (f_* \times g_* \times h_*) \arrow[d] \\
	\boxtensor \comp ((f \times g)_* \times h_*) (\boxtensor \times \Id) \arrow[d] &
		 \boxtensor \comp (f \times (g_* \times h_*)) (\Id \times \boxtensor) \arrow[d] \\
	(f \times g \times h)_* \comp \boxtensor \comp (\boxtensor \times \Id) \arrow[r] &
		(f \times g \times h)_* \comp \boxtensor \comp (\Id \times \boxtensor)
\end{tikzcd}
\]
commutes; similarly, we require the diagram formed from this by replacing all direct images by inverse images also commutes.
\end{definition}

\begin{remark}
This construction is a generalisation of the external tensor product in the category of sheaves. In this category, we can recover the usual tensor product from this data by the formula
\[	
	\e \tensor \f \isomorphic i^{1*}_{12}(\e \boxtensor \f)
\]
\end{remark}

In any monoidal geofibered category, we can define a notion of Fourier--Mukai transforms. Recall in the traditional setting, the Fourier--Mukai transform associated to a sheaf $\e \in \D(X \times Y)$ is the functor
\begin{align*}
	\Phi_\e\colon \D(X) &\to \D(Y) \\
		\x &\mapsto \pi^{XY}_{Y*}\left( \pi^{XY*}_{X}(\x) \tensor \e \right) \,.
\end{align*}
Since the tensor product can be expressed using the diagonal morphism $i^1_{12}$ and the external product, we can construct an equivalent functor for a monoidal geofibered category:
\begin{align}
	\Phi_\e\colon \Shapes_X &\to \Shapes_Y \nonumber \\
		\x &\mapsto \pi^{XY}_{Y*} \comp i^{1*}_{12} \comp (\pi^{XY}_{X} \times \Id_{XY})^*(\x \boxtensor \e) \label{eqn:FourierMukaiTransformDefinition} \,.
\end{align}

\begin{remark}
\label{rem:kernelsnotunique}
Note that in general, the functor does not determine the kernel \cite{CanonacoStellari2010}. However, there are certain cases where the kernel is determined. If $\Phi_\e\colon \Shapes_{\pt} \to \Shapes_{\pt}$ is a Fourier--Mukai transform, then  all of the morphisms in \autoref{eqn:FourierMukaiTransformDefinition} are isomorphisms, so the kernel can be recovered.
\end{remark}

We can generalise the projection formula for sheaves to monoidal geofibered categories (this is an immediate consequence of the proof given by Reich~\cite{Reich14}).

\begin{lemma}
\label{lem:ProjectionFormulaExterior}
Let $F\colon \Shapes \to \Spaces$ be a monoidal geofibered category. Let $f \in \Hom_\Spaces(X, Y)$. Then there is a natural isomorphism of functors
\[
	f_* \comp \Delta^* \comp (f \times \Id)^* \isomorphic \Delta^* \comp (\Id \times f)_*
\]
\end{lemma}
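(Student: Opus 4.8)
The plan is to collapse both composites into a single functor twisted by the \emph{graph} of $f$, and then to identify the remaining claim with the projection formula proved by Reich. Write $\Delta_X$ and $\Delta_Y$ for the two diagonals appearing in the statement, and let $\Gamma_f\colon X\to Y\times X$ be the graph morphism of $f$, characterised by $\pi^{YX}_Y\comp\Gamma_f=f$ and $\pi^{YX}_X\comp\Gamma_f=\Id_X$; equivalently $\Gamma_f=(f\times\Id_X)\comp\Delta_X$.

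First I would apply the composition natural transformation of the geofibered structure to this factorisation, obtaining a natural isomorphism $\Delta_X^*\comp(f\times\Id_X)^*\isomorphic\Gamma_f^*$ and hence $f_*\comp\Delta_X^*\comp(f\times\Id_X)^*\isomorphic f_*\comp\Gamma_f^*$. Next I would exhibit the square
\[
\begin{tikzcd}
 X \arrow[r,"\Gamma_f"] \arrow[d,"f"'] & Y\times X \arrow[d,"\Id_Y\times f"] \\
 Y \arrow[r,"\Delta_Y"] & Y\times Y
\end{tikzcd}
\]
and check it is Cartesian: the fibre product of $\Delta_Y$ and $\Id_Y\times f$ consists of pairs $(y,(y',x))$ with $y=y'=f(x)$, which maps isomorphically onto $X$ via $x\mapsto(f(x),x)=\Gamma_f(x)$, the two projections becoming $f$ and $\Gamma_f$. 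The base-change morphism attached to this square is a natural transformation
\[
 \text{bc}(\Id_Y\times f,\Delta_Y)\colon \Delta_Y^*\comp(\Id_Y\times f)_*\longrightarrow f_*\comp\Gamma_f^*\,,
\]
and the lemma is precisely the assertion that it is an isomorphism: combining it with the previous display then gives the stated natural isomorphism, naturality being automatic since every arrow in sight is a structure transformation of the geofibered category.

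It remains to see that this base change is invertible, and this is the only non-formal point. It does not follow from the geofibered axioms alone, which force $\text{bc}$ to be an isomorphism only when one leg of the square lies in a pull-geolocalizing class, whereas $\Delta_Y$ is not flat. Rather, it is exactly the content of the projection formula: feeding a box product $\g\boxtensor\f$ into the square, the top-left corner computes $(f^*\g)\tensor\f$ and the bottom-left corner computes $\g\tensor(f_*\f)$, so inverting $\text{bc}$ is the same as the classical isomorphism $f_*\bigl((f^*\g)\tensor\f\bigr)\isomorphic\g\tensor(f_*\f)$. Reich proves this in the geofibered setting in \cite{Reich14}, using only the compatibilities of $\boxtensor$ with $f_*$ and $f^*$ that are part of \autoref{def:MonoidalGeofiberedCat}; his argument therefore transfers verbatim, and I would simply invoke it.

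The main obstacle is thus squarely this last step. Were one to reprove it rather than cite Reich, the difficulty is to control $\text{bc}$ for the twisted-graph square in spite of the non-flatness of the diagonal: the usual remedy is to reduce to a class of objects generating $\D(Y\times X)$ on which $f_*$, $f^*$ and $\boxtensor$ can be computed affine-locally — where the statement degenerates to the elementary module-theoretic projection formula — and then bootstrap back using exactness and a continuity argument.
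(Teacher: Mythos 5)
Your proposal is correct and rests on the same foundation as the paper, which offers no argument beyond the parenthetical remark that the lemma is an immediate consequence of Reich's proof of the projection formula in the geofibered setting. Your extra scaffolding---rewriting $\Delta_X^*\comp(f\times\Id)^*$ as $\Gamma_f^*$ via the composition isomorphism, recognising the comparison map as the base-change morphism of the Cartesian graph square, and correctly observing that its invertibility is not supplied by the pull-geolocalizing axioms but is exactly Reich's projection formula---is a sound and somewhat more explicit packaging of the same citation.
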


\begin{corollary}
\label{lem:ProjectionFormulaSpecialised}
Let $\e \in \Shapes_Y$. There is an isomorphism
\[
	f_* \comp f^* (\e) \isomorphic \Delta^* \comp (\e \boxtensor f_*(\Ox))
\]
\end{corollary}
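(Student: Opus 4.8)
The plan is to obtain this as a direct consequence of \autoref{lem:ProjectionFormulaExterior}, by evaluating that natural isomorphism on a suitable external product and then collapsing the external products using the axioms of \autoref{def:MonoidalGeofiberedCat}. Write $f\colon X \to Y$. I would apply the isomorphism $f_* \comp \Delta^* \comp (f \times \Id)^* \isomorphic \Delta^* \comp (\Id \times f)_*$ to the object $\e \boxtensor \Ox \in \Shapes_{Y \times X}$, keeping in mind that the $\Delta$ on the left is the diagonal $X \to X \times X$ while the $\Delta$ on the right is the diagonal $Y \to Y \times Y$, and that $f \times \Id$ means $f \times \Id_X\colon X \times X \to Y \times X$ while $\Id \times f$ means $\Id_Y \times f\colon Y \times X \to Y \times Y$.

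For the left-hand side, I would first use the compatibility of $\boxtensor$ with inverse images to rewrite $(f \times \Id_X)^*(\e \boxtensor \Ox) \isomorphic f^*(\e) \boxtensor \Id_X^*(\Ox)$, and then replace $\Id_X^*(\Ox)$ by $\Ox$ using the isomorphism $\Ox \isomorphic \Id^*(\Ox)$ coming from the composition transformation. Applying $\Delta_X^* = i^{1*}_{12}$ and the left unitor $i^{1*}_{12}(\f \boxtensor \Ox) \isomorphic \f$ then identifies $\Delta_X^*(f^*(\e) \boxtensor \Ox)$ with $f^*(\e)$, so the left-hand side of \autoref{lem:ProjectionFormulaExterior} becomes $f_*(f^*(\e))$.

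For the right-hand side, I would dually use the compatibility of $\boxtensor$ with direct images to rewrite $(\Id_Y \times f)_*(\e \boxtensor \Ox) \isomorphic (\Id_Y)_*(\e) \boxtensor f_*(\Ox) \isomorphic \e \boxtensor f_*(\Ox)$, so that applying $\Delta_Y^*$ yields $\Delta^*(\e \boxtensor f_*(\Ox))$, which is precisely the right-hand side of the claimed isomorphism. Concatenating these identifications with the isomorphism provided by \autoref{lem:ProjectionFormulaExterior} gives the result.

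I do not expect a serious obstacle: the mathematical content sits entirely in \autoref{lem:ProjectionFormulaExterior}, and what remains is bookkeeping. The two points requiring care are keeping the two distinct diagonals (and the associated projections in $f \times \Id$ versus $\Id \times f$) straight so that they are not conflated, and checking that the coherence squares of \autoref{def:MonoidalGeofiberedCat} relating $\boxtensor$ to $f_*$ and $f^*$ make the resulting chain of isomorphisms natural in $\e$ rather than merely objectwise; but this is exactly what those compatibility axioms are designed to guarantee.
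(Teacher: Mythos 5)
Your proposal is correct and follows essentially the same route as the paper: apply \autoref{lem:ProjectionFormulaExterior} to $\e \boxtensor \Ox$, use the compatibility of $\boxtensor$ with direct images to rewrite the right-hand side as $\Delta^*(\e \boxtensor f_*(\Ox))$, and use the compatibility with inverse images together with the unitor to identify the left-hand side with $f_* \comp f^*(\e)$. The extra care you take in distinguishing the two diagonals and invoking naturality is fine but adds nothing beyond the paper's argument.
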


\begin{proof}
Applying \autoref{lem:ProjectionFormulaExterior} to $\e \boxtensor \Ox$ gives
\begin{align*}
	f_* \comp \Delta^* \comp (f \times \Id)^*(\e \boxtensor \Ox)
		&\isomorphic \Delta^* \comp (\Id \times f)_* (\e \boxtensor \Ox) \\
		&\isomorphic \Delta^* \comp (\e \boxtensor f_*(\Ox)) \,.
\end{align*}
The left-hand side is isomorphic to
\[
	f_* \comp \Delta^*(f^*(\e) \boxtensor \Ox) \isomorphic f_* \comp f^*(\e) \,,
\]
where we use the unitor isomorphism $\Delta^* (\e \boxtensor \Ox) \isomorphic \e$. 
\end{proof}

\subsection{Construction of double category}

\begin{proposition}
\label{prop:ConstructingDoubleCategory}
Let $F\colon \Shapes \to \Spaces$ be a monoidal geofibered category, where $\Spaces$ has a terminal object, and the class of projection morphisms is pull-geolocalizing. Then there is a double category with:
\begin{itemize}
	\item objects given by objects in $\Spaces$;
	\item vertical morphisms given by diagrams of the form $\dblver{A}{U}{f}$, where $f \in \Hom_{\Spaces}(A, U)$;
	\item horizontal morphisms given by diagrams $\dblhor{A}{B}{\e}$, where $\e \in \Shapes_{A \times B}$; and
	\item $2$-cells given by diagrams
\[
	\dblcell{A}{B}{U}{V}{f}{g}{\e}{\f}{\alpha}
\]
where:
	\begin{itemize}
		\item $A, B, U, V \in \Spaces$; 
		\item $f \in \Hom(A, U)$, $g \in \Hom(B, V)$;
		\item $\e \in \Shapes_{A \times B}$, $\f \in \Shapes_{U \times V}$; and
		\item $\alpha \in \Hom(\f, (f \times g)_*(\e))$.
	\end{itemize}
\end{itemize}
The double category is such that in its horizontal category, composition of $1$- and $2$-morphisms are given by the Fourier--Mukai composition,
\[
	\e \fmcomp \f = \pi^{ABC}_{AC} \comp i^{124*}_{1234}(\e \boxtensor \f) \,.
\]

Furthermore, this double category can be given the structure of a symmetric monoidal category, with monoidal product given on objects by
\[
	X \tensor Y = X \times Y
\]
and on horizontal morphisms by
\[ \e \tensor \f = \e \boxtensor \f \,. \]
The monoidal product also satisfies that the product of globular $2$-cells is given by
\[
	\dblcell{A}{B}{A}{B}{\Id_A}{\Id_B}{\e}{\p}{\alpha}
		\tensor \dblcell{C}{D}{C}{D}{\Id_C}{\Id_D}{\f}{\q}{\beta} =
	\dblcell{A \times C}{B \times D}{A \times C}{B \times D}{\Id_{A \times C}}{\Id_{B \times D}}{\e \boxtensor \f}{\p \boxtensor \q}{\alpha \boxtensor \beta} \,.
\]
\end{proposition}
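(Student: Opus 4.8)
The plan is to build the double category first, then bolt on the monoidal structure, checking axioms by reducing everything to properties of the external tensor product and the base-change/projection-formula lemmas already established.

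\textbf{Step 1: Assemble the double category.} First I would specify the composition laws. Vertical composition of vertical morphisms is just composition in $\Spaces$: given $f\colon A \to U$ and $f'\colon U \to U'$, compose to $f' \comp f$. Horizontal composition of horizontal morphisms $\e \in \Shapes_{AB}$ and $\f \in \Shapes_{BC}$ is the Fourier--Mukai composition $\e \fmcomp \f = \pi^{ABC}_{AC} \comp i^{124*}_{1234}(\e \boxtensor \f)$, and I must verify this is associative and unital up to coherent isomorphism. Associativity will follow by expanding $(\e \fmcomp \f) \fmcomp \g$ and $\e \fmcomp (\f \fmcomp \g)$ using the monoidal-geofibered structure: both reduce to a functor of the form $\pi^{ABCD}_{AD} \comp (\text{inclusions})^* (\e \boxtensor \f \boxtensor \g)$, using the compatibility diagrams in \autoref{def:MonoidalGeofiberedCat} to move the $\boxtensor$'s past the $\pi_*$'s and the base-change property (projections are pull-geolocalizing) to commute the middle pushforward past the pullback. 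Units are $\O_{\Delta_A} = i^1_{12*}(\O_A)$: the identity $\e \fmcomp \O_{\Delta_B} \isomorphic \e$ follows from \autoref{lem:ProjectionFormulaSpecialised} or a direct unitor computation. Vertical composition of $2$-cells: given $\alpha\colon \f \to (f \times g)_*(\e)$ stacked over $\alpha'\colon \f' \to (f' \times g')_*(\f)$, compose by applying $(f' \times g')_*$ to $\alpha$ and composing with $\alpha'$, using that $(f'f \times g'g)_* \isomorphic (f'\times g')_* \comp (f \times g)_*$. Horizontal composition of $2$-cells: given $\alpha\colon \f \to (f\times g)_*\e$ and $\beta\colon \q \to (g \times h)_*\f'$, produce a $2$-cell $\f \fmcomp \q \to (f \times h)_*(\e \fmcomp \f')$ by taking $\alpha \boxtensor \beta$, applying the inclusion pullback and projection pushforward, and invoking the base-change isomorphism to pull the $*$-pushforward $(f \times g \times h)_*$ out through $\pi^{ABC}_{AC}$ (this uses that projections are in the pull-geolocalizing class). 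The interchange law for the two compositions of $2$-cells then reduces to the naturality of base-change and the compatibility axioms.

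\textbf{Step 2: Define the monoidal product and check it is a (symmetric) monoidal double category.} On objects $X \tensor Y = X \times Y$; on vertical morphisms $(f\colon A \to U) \tensor (f'\colon A' \to U') = f \times f'$; on horizontal morphisms $\e \tensor \f = \e \boxtensor \f$ (with an implicit reindexing isomorphism $A \times B \times C \times D \isomorphic (A\times C)\times(B\times D)$); on $2$-cells by $\boxtensor$ of the underlying morphisms, post-composed with the coherence isomorphism $((f\times g)_*\e) \boxtensor ((f'\times g')_*\f) \isomorphic ((f\times f')\times(g\times g'))_*(\e \boxtensor \f)$ supplied by \autoref{def:MonoidalGeofiberedCat}. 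The functoriality of $\tensor$ as a functor of double categories — that it respects both horizontal and vertical composition — is exactly where the two compatibility hexagons in \autoref{def:MonoidalGeofiberedCat} (the one for direct images and the one for inverse images) get used; I would point to them explicitly. The associator and unitors of the double category are induced by $\alpha_\boxtensor$ and the left/right unitor transformations of the monoidal geofibered structure, and the pentagon and triangle axioms for the double category are inherited from those required in \autoref{def:MonoidalGeofiberedCat}. The symmetry $\e \boxtensor \f \isomorphic \f \boxtensor \e$ comes from the swap isomorphism on $\Spaces$ (which $F$ preserves), and the hexagon axioms again reduce to coherence of $\alpha_\boxtensor$ with the swap. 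The final displayed identity about products of globular $2$-cells is then immediate from the definition of $\tensor$ on $2$-cells restricted to the case $f = \Id$, $g = \Id$.

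\textbf{Step 3: Identify the horizontal category with $\Var$.} In the specialisation where $\Spaces$ is smooth schemes and $\Shapes_{XY} = \D(X\times Y)$, the horizontal $1$-morphisms are objects of $\D(X\times Y)$, horizontal composition is literally the Fourier--Mukai kernel composition $\pi^{ABC}_{AC*}(\pi^{ABC*}_{AB}\e \tensor \pi^{ABC*}_{BC}\f)$, and vertical-identity $2$-cells are morphisms in $\D(X\times Y)$; so the horizontal category is $\Var$ as described at the start of the section, and the monoidal product restricts to $X\tensor Y = X\times Y$, $\e \tensor \f = \e \boxtensor \f$ — which is the desired symmetric monoidal structure on $\Var$.

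\textbf{Main obstacle.} The genuinely laborious part is Step 1's interchange law and Step 2's functoriality check: each is a large pasting diagram that must be shown to commute, and both hinge on correctly threading the base-change isomorphisms (legitimate because projection morphisms are pull-geolocalizing) through the compatibility hexagons of \autoref{def:MonoidalGeofiberedCat}. I expect to organise this via the graphical calculus of stacked linear diagrams introduced in the Notation subsection, so that "moving a $\boxtensor$ past a $\pi_*$" and "applying base change across a pullback square of projections" become local rewrites, and the coherence then becomes a finite confluence check rather than an unstructured computation. The unit coherence (that $\O_\Delta$ really is a strict-enough unit and that the reindexing isomorphisms for $\boxtensor$ on the four-fold product are coherent) is fiddly but routine given the unitor axioms already imposed.
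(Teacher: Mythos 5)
Your proposal follows essentially the same route as the paper's own (sketch) proof: the same structure functors (vertical composition of $2$-cells via $(f'\times g')_*$, horizontal composition via $\boxtensor$ corrected by base-change isomorphisms legitimated by the pull-geolocalizing projections, units $\O_\Delta$, associators and unitors from the geofibered monoidal data, and the distributivity morphism $\mathfrak{d}$ for the tensor of $2$-cells), with the lengthy coherence verifications deferred just as the paper defers them. No substantive difference in approach.
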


\begin{proof}[Proof (sketch)]
We give the details of the structure functors for the double category, and some details of the natural transformations describing their coherences (e.g. associativity). For brevity, we do not give any details of the calculations required to verify that the necessary conditions are  met (e.g. Mac Lane's pentagon diagram), as verifying these conditions offers little insight but are cumbersome to check.

Let $\Dobj = \Spaces$ and $\Darr$ be the category with objects given by diagrams $\dblhor{A}{B}{\e}$ and morphisms given by diagrams 
\[
	\dblcell{A}{B}{U}{V}{f}{g}{\e}{\f}{\alpha}
\]
where the top and bottom lines are objects of $\Darr$, the morphisms $f$ and $g$ are morphisms in $\Spaces$, and
\[ \alpha\colon \F \to (f \times g)_*(\e) \]
is a morphism in $\Shapes_{U \times V}$. Composition of two diagrams (represented by vertically stacking cells) is defined by:
\[
\begin{tikzcd}[column sep=huge,row sep=huge,baseline=(\tikzcdmatrixname-2-1)]
A \arrow[r,"\e"{name=E}] \arrow[d,"f"] & B \arrow[d,"g"] \\
U \arrow[r,"\f"{name=F}] \arrow[d,"f'"] & V \arrow[d,"g'"] \\
X \arrow[r,"\g"{name=G}] & Y \\
\arrow[Rightarrow,from=F,to=E,"\alpha"]
\arrow[Rightarrow,from=G,to=F,"\beta"]
\end{tikzcd}
=
\begin{tikzcd}[column sep=15em,row sep=huge,baseline=(baseline)]
A \arrow[r,"\e"{name=E}] \arrow[dd,"f' \comp f"{name=baseline}] & B \arrow[dd,"g' \comp g"] \\
\\
X \arrow[r,"\g"{name=F}] & Y
\arrow[Rightarrow,from=F,to=E,"(f' \times g')_*(\alpha) \comp \beta"]
\end{tikzcd}
\]

The source and target functors $S, T: \Dobj \to \Darr$ of the double category are given by taking the left (resp. right) side of a diagram. The unit functor $U: \Dobj \to \Darr$ is given on objects by
\[
	U(A) = \dblhor{A}{A}{i^{1}_{12*}(\O_A)}
\]
and on morphisms by
\[
U\left(\dblver{X}{Y}{f} \right) = \dblcell[column sep=huge,row sep=huge]{X}{X}{Y}{Y}{f}{f}{\O_{\Delta_X}}{\O_{\Delta_Y}}{\mathfrak{u}_f} \,.
\]
The morphism $\mathfrak{u}_f$ is constructed from the sequence of natural transformations
\begin{equation}
\begin{tikzcd}
Y \arrow[rr,equal] & {} \arrow[d,Rightarrow,red,"\text{unit}"] & Y \arrow[r,"i^1_{12}"] & YY \\
Y & X \arrow[l,"f"'] \arrow[r,"f"] & Y\arrow[r,"i^1_{12}"] & YY \\
Y & X \arrow[l,"f"'] \arrow[rr,"i^1_{12} \comp f"{name=F}] &{}& YY \\
Y & X \arrow[l,"f"'] \arrow[r,"i^1_{12}"] & XX \arrow[u,Rightarrow,red,"\text{comp}"] \arrow[r,"f \times f"] & YY
\arrow[from=\tikzcdmatrixname-2-3,to=F,Rightarrow,red,"\text{comp}"]
\end{tikzcd}
\label{eqn:UnitCompositionNatxform}
\end{equation}
Evaluating this natural transformation at $\Oy$ and identifying $f^*(\Oy)$ with $\Ox$ gives a morphism $\mathfrak{u}_f: \O_{\Delta_Y} \to (f \times f)_*(\O_{\Delta_X})$.

The final structure functor needed to construct a double category is the horizontal composition functor $\odot\colon \Darr \times \Darr \to \Darr$. On objects, this is given by
\[
	\dblhor{A}{B}{\e} \odot \dblhor{B}{C}{\f} = \dblhor{A}{C}{\e \odot \f} \,,
\]
where $\e \odot \f = \pi^{ABC}_{AC*}(i^{124*}_{1234}(\e \boxtensor \f))$. On morphisms, the definition is more involved. The composition is given by
\[
	\dblcell{A}{B}{U}{V}{f}{g}{\e}{\p}{\alpha} \fmcomp \dblcell{B}{C}{V}{W}{g}{h}{\f}{\q}{\beta} = \dblcell[column sep=12em]{A}{C}{U}{W}{f}{h}{\e \fmcomp \f}{\p \fmcomp \q}{\eta_{\alpha,\beta} \comp (\alpha \fmcomp \beta)}
\]
where $\eta_{\alpha,\beta}$ is a ``correction morphism'' to account for the fact that $\alpha \fmcomp \beta$ has codomain
\[
	(f \times g)_* (\e) \fmcomp (g \times h)_* (\f) = \pi^{ABC}_{AC*} \comp i^{124*}_{1234}((f \times g)_*(\e) \boxtensor (g \times h)_*(\f)) \,.
\]
Using the distributivity of direct images over $\boxtensor$, the right-hand side is isomorphic to
\[
	\pi^{ABC}_{AC*} \comp i^{124*}_{1234} \comp (f \times g \times g \times h)_*(\e \boxtensor \f)
\]
There is a sequence of natural isomorphisms of functors
\[
\begin{tikzcd}
ABBC \arrow[r, "fg\Id\Id"] & UVBC \arrow[r, "\Id\Id gh"] & UVVW \arrow[d, Rightarrow,"\text{bc}",red] & UVW \arrow[l] \arrow[r] & UW \\
ABBC \arrow[r, "fg\Id_Y\Id_Z"] & UVBC \arrow[d,Rightarrow,red,"\text{counit}"] & UBC \arrow[l,"{\Id(g, \Id)\Id}"'] \arrow[r,"\Id_{U}gh"] & UVW \arrow[r] & UW \\
ABBC \arrow[r,"f\Id\Id\Id"] & UBBC \arrow[d,Rightarrow,red,"\text{bc}"] & UBC \arrow[l,"i^{124}_{1234}"'] \arrow[r,"\Id_{U}gh"] & UVW \arrow[r] & UW \\
ABBC & ABC \arrow[l,"i^{124}_{1234}"'] \arrow[r,"f\Id\Id"] & UBC \arrow[r,"\Id_{U}gh"] & UVW \arrow[r] & UW \\ 
ABBC & ABC \arrow[l,"i^{124}_{1234}"']\arrow[r,"\pi^{ABC}_{AAC}"] & AC \arrow[rr,"fh"{name=F}] & {} & UW \,.
\arrow[from=\tikzcdmatrixname-4-4,to=F,Rightarrow,red,"\text{comp}"]
\end{tikzcd}
\]
Applying the final functor to $\e \boxtensor \f$ gives $(f \times h)_*(\e \fmcomp \f)$, which is the desired codomain. We take $\eta_{\alpha,\beta}$ to be the composition of this sequence of morphisms.

The unitor natural isomorphisms $\mathfrak{l}\colon U_B \tensor M \to M$ and $\mathfrak{r}\colon M \tensor U_A \to M$ are given by the sequence of base-change natural transformations that are used to show that $\Od$ behaves as the identity under composition of Fouier-Mukai kernels (see e.g. \cite[Example~5.4]{Huybrechts06}). For any horizontally composable sequence
\[
\dblhor{A}{B}{\e}, \dblhor{B}{C}{\f}, \dblhor{C}{D}{\g}
\]
of objects in $\Darr$, the associator natural isomorphism is a $2$-cell of the form
\[
	\dblcell[column sep=huge]{A}{D}{A}{D}{\Id_A}{\Id_D}{(\e \fmcomp \f) \fmcomp \g}{\e \fmcomp (\f \fmcomp \g)}{\alpha_{\e,\f,\g}} \,.
\]
Now
\begin{align}
\e \fmcomp (\f \fmcomp \g)
	&= \pi^{126}_{16*} ( i^{126*}_{1,2,36}(\e \boxtensor (\f \fmcomp \g)) \nonumber \\
	&= \pi^{126}_{16*} \comp i^{126*}_{1,2,36} \big(\e \boxtensor \pi^{346}_{36*} \comp i^{346*}_{3,45,6} (\f \boxtensor \g) \big) \nonumber \\
	&\isomorphic \pi^{126}_{16*} \comp i^{126*}_{1,23,6} \comp \pi^{12346}_{1236*} \comp i^{12346*}_{1,2,3,45,6} (\e \boxtensor (\f \boxtensor \g)) \label{eqn:ConstructingDoubleCategoriesProof1}
\end{align}
where in the final line we use the natural isomorphisms from the definition of a monoidal structure on a geofibered category. Using the notation for standard geometric functors, this can be draw diagramatically as the result of applying the functor
\begin{equation}
\begin{tikzcd}
	ABBCCD & ABBCD \arrow[l,"i^{12346}_{1,2,3,45,6}"] \arrow[r,"\pi^{12346}_{1236}"] & ABBD & ABD \arrow[l,"i^{126}_{1,23,6}"] \arrow[r,"\pi^{126}_{16}"]& AD
\end{tikzcd}
\label{eqn:ConstructingDoubleCategoriesProof2}
\end{equation}
to the object $\e \boxtensor (\f \boxtensor \g)$. There is a natural isomorphism
\[
\begin{tikzcd}
	ABBCCD & ABBCD \arrow[l,"i^{12346}_{1,2,3,45,6}"] \arrow[r,"\pi^{12346}_{1236}"] & ABBD \arrow[d,Rightarrow,red,"\text{bc}"] & ABD \arrow[l,"i^{126}_{1,23,6}"] \arrow[r,"\pi^{126}_{16}"]& AD \\
	ABBCCD & ABBCD \arrow[l,"i^{12346}_{1,2,3,45,6}"] & ABCD \arrow[l,"i^{1246}_{1,23,4,6}"] \arrow[r,"\pi^{1246}_{126}"] & ABD \arrow[r,"\pi^{126}_{16}"] & AD \\
	ABBCCD && ABCD \arrow[ll,"i^{1246}_{1,23,45,6}"'{name=E}] \arrow[rr,"\pi^{1246}_{16}"{name=F}] && AD
\arrow[from=\tikzcdmatrixname-2-2,to=E,Rightarrow,red,"\text{comp}"]
\arrow[from=\tikzcdmatrixname-2-4,to=F,Rightarrow,red,"\text{comp}"]
\end{tikzcd}
\]
Combining this with \autoref{eqn:ConstructingDoubleCategoriesProof1}, we find
\[
\e \fmcomp (\f \comp \g) \isomorphic \pi^{1246}_{16*} \comp i^{1246*}_{1,23,45,6} (\e \boxtensor (\f \boxtensor \g)) \,.
\]
Applying a similar process, we also find
\[
(\e \fmcomp \f) \fmcomp \g \isomorphic \pi^{1246}_{16*} \comp i^{1246*}_{1,23,45,6} ((\e \boxtensor \f) \boxtensor \g) \,.
\]
Combining these two results gives the morphism $\alpha_{\e,\f,\g}$.

We now give the data for the symmetric monoidal structure. The category $\Dobj$ can be made into a symmetric monoidal category by taking
\[
	A \tensor B = A \times B \,.
\]
The tensor product of two objects in $\Darr$ is given by the external tensor product,
\[
	\dblhor{A}{B}{\e} \tensor \dblhor{C}{D}{\f} = \dblhor{A \times C}{B \times D}{\e \boxtensor \f} \,.
\]
On morphisms, the tensor product is given by taking the external product, then composing with the distributivity morphism:
\[
	\dblcell{A}{B}{U}{V}{f}{g}{\e}{\p}{\alpha}
		\tensor \dblcell{C}{D}{W}{X}{h}{i}{\f}{\q}{\beta}
	= \dblcell{A \times C}{B \times D}{U \times W}{V \times X}{f \times h}{g \times i}{\e \boxtensor \f}{\p \boxtensor \q}{\alpha \tensor \beta}
\]
where
\[
\alpha \tensor \beta = \mathfrak{d}_{fg,hi} \comp (\alpha \boxtensor \beta) \,.
\]
and
\[
	\mathfrak{d}_{fg,hi}\colon (f \times g)_* \boxtensor (h \times i)_* \isomorphic (f \times g \times h \times i)_* \comp \boxtensor \,.
\]
The associativity natural transformation for this is given by the associator $\alpha_\boxtensor$ for the external product.
\end{proof}

\subsection{Underlying horizontal \texorpdfstring{$2$}{2}-category}

We want to be able to use the monoidal structure on a double category to construct a monoidal structure on its horizontal $2$-category. Hansen and Shulman~\cite{HansenShulman19} provide a condition to permit this construction.

\begin{theorem}\cite{HansenShulman19}
If $\mathbb{D}$ is a monoidal double category, of which the monoidal constraints have
loosely strong companions, then its horizontal $2$-category is a monoidal bicategory.
If $\mathbb{D}$ is braided or symmetric, so is its horizontal $2$-category.
\end{theorem}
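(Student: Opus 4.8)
This is the Hansen–Shulman transfer theorem, so the plan is not to invent anything specific to $\Var$ but to sketch the general mechanism that extracts a monoidal bicategory from a monoidal double category whose constraints admit loosely strong companions. The starting point is that taking horizontal bicategories is functorial: there is an assignment $H\colon \mathbf{Dbl} \to \mathbf{Bicat}$ on (pseudo) double functors and their transformations. The tensor double functor $\otimes\colon \mathbb{D}\times\mathbb{D}\to\mathbb{D}$ and the unit $I\colon * \to \mathbb{D}$ therefore restrict directly to a tensor product $\otimes$ and unit object on $H\mathbb{D}$, since $H$ carries double functors to pseudofunctors of bicategories. The genuine obstruction, and the reason companions are needed, is that the associator, unitors, and (later) braiding of a monoidal double category are pseudonatural transformations \emph{of} double functors, whose components are \emph{vertical} $1$-morphisms together with $2$-cells; but a monoidal bicategory demands that its associator and unitors be \emph{horizontal} $1$-cells forming pseudonatural adjoint equivalences. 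The whole role of the companion structure is to convert this vertical data into horizontal data.

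First I would isolate the companion calculus as a black box. Recall that a companion of a vertical morphism $f$ is a horizontal morphism $\hat f$ equipped with two $2$-cells satisfying the companion identities, so that $\hat f$ behaves like $f$ rotated into the horizontal direction; companions compose up to canonical comparison $2$-cells $\widehat{g\circ f}\cong \hat g\circ\hat f$, identities have identity companions, and crucially the companion of a vertical \emph{equivalence} is a horizontal adjoint equivalence. The \emph{loosely strong} hypothesis is precisely what guarantees these comparison $2$-cells are coherent isomorphisms compatible with composition, which is the minimal input making the transfer well-defined. I would then define the associator $1$-cell of $H\mathbb{D}$ to be the companion of the vertical associator component of $\mathbb{D}$: since the double-categorical associator is an equivalence, its companion is automatically an adjoint equivalence in $H\mathbb{D}$, exactly as a monoidal bicategory requires, and the unitors are handled identically.

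Next I would assemble the higher coherence. A monoidal bicategory carries invertible modifications — the pentagonator $\pi$ and the unit modifications $\mu,\lambda,\rho$ — and these should be built by pasting the $2$-cell data of the coherence modifications of $\mathbb{D}$ (its pentagon and triangle) together with the companion comparison $2$-cells relating $\widehat{g\circ f}$ to $\hat g\circ\hat f$. The work is then to verify that the monoidal-bicategory axioms (the pentagon/triangle-type equations among these modifications) reduce to the pseudomonoid axioms of $\mathbb{D}$ in $\mathbf{Dbl}$, modulo the companion coherence identities. For the braided and symmetric cases the same companion mechanism applies: the braiding pseudonatural equivalence of $\mathbb{D}$ transfers to a braiding $1$-cell in $H\mathbb{D}$, with the two hexagon modifications coming from its naturality $2$-cells, and the syllepsis and symmetry of a symmetric monoidal double category yield the corresponding data and axiom in $H\mathbb{D}$.

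The hard part will be the bookkeeping: a monoidal bicategory has a large amount of tricategorical coherence data and a correspondingly large list of axioms, and checking each one by hand is where the difficulty genuinely lies rather than in any single conceptual step. The clean way to control this is to prove once and for all a single \emph{companion transfer lemma} stating that any equation between pastings of vertical morphisms and $2$-cells in $\mathbb{D}$ translates, under the companion construction, into the corresponding equation of globular pastings in $H\mathbb{D}$ — with the loosely strong condition supplying exactly the compatibility that makes this translation sound. Each monoidal-bicategory axiom would then be obtained by applying this lemma uniformly to the matching double-categorical axiom, so that the verification becomes a catalogue of instances of one result rather than an independent calculation for every coherence cell.
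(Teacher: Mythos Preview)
The paper does not prove this theorem: it is quoted verbatim from Hansen--Shulman \cite{HansenShulman19} as an external input, with no proof or sketch given. Your outline is a faithful summary of the Hansen--Shulman mechanism (companion transfer of vertical coherence data to horizontal adjoint equivalences, with a uniform lemma handling the tricategorical axioms), so there is simply nothing in the paper to compare it against.
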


Recall a transformation $\alpha$ has loosely strong companions if each component $\alpha_A$ has a loose companion, and the resulting colax transformation $\hat{\alpha}$ is pseudo-natural.

\begin{corollary}
\label{cor:HorizontalCategoryIsMonoidal}
The horizontal $2$-category of the double category constructed in \autoref{prop:ConstructingDoubleCategory} has a symmetric monoidal product, where the monoidal product is given on objects by
\[
	X \tensor Y = X \times Y
\]
and on hom-categories by the functor
\begin{align*}
	\boxtensor\colon \Hom_\Spaces(A, U) \times \Hom_\Spaces(B, V) &\to \Hom_\Spaces(A \tensor U, B \tensor V) \\
		(\e, \f) &\mapsto \e \boxtensor \f \\
		(f, g) &\mapsto f \boxtensor g \,.
\end{align*}
\end{corollary}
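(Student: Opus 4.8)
The plan is to deduce the corollary from the Hansen--Shulman theorem quoted above. By \autoref{prop:ConstructingDoubleCategory} the double category $\mathbb{D}$ constructed there is already a symmetric monoidal double category, so the only thing left to verify is that its monoidal constraints --- the associator, the two unitors and the symmetry --- have loosely strong companions. Granting this, Hansen--Shulman produce a symmetric monoidal structure on the horizontal $2$-category of $\mathbb{D}$; since the monoidal double functor $\tensor\colon \mathbb{D} \times \mathbb{D} \to \mathbb{D}$ acts on objects by $X \tensor Y = X \times Y$ and on loose arrows and globular $2$-cells by the external product (as recorded in \autoref{prop:ConstructingDoubleCategory}), unwinding their construction gives exactly the stated formulas for the monoidal product on objects and on hom-categories.

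The first step is to observe that every vertical morphism of $\mathbb{D}$ has a loose companion. A vertical morphism is a morphism of schemes $f\colon A \to U$, and its companion is the loose arrow $\hat f \in \Shapes_{A \times U}$ obtained by pushing the structure sheaf forward along the graph $(\Id_A \times f) \comp i^1_{12}\colon A \to A \times U$ --- equivalently, the Fourier--Mukai kernel whose transform is $f_*$. The two companion $2$-cells, and the verification of the zig-zag identities, are assembled from the adjunction unit and counit and the composition natural transformations of standard geometric functors, exactly as in the construction of the $2$-cell $\mathfrak{u}_f$ in \autoref{eqn:UnitCompositionNatxform}; the zig-zag identities reduce to the triangle identities of those adjunctions together with coherences already verified for the unit functor of $\mathbb{D}$. (This is the familiar fact that in double categories of ``Fourier--Mukai kernel'' type every vertical morphism has a companion, given by the graph of the morphism.)

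It remains to check that each monoidal constraint $\alpha$ is loosely strong. Its components at objects are vertical morphisms of $\mathbb{D}$; since the monoidal structure on $\Dobj = \Spaces$ is the fibre product over $\Spec(k)$, these components are the canonical associativity, unit and symmetry isomorphisms of the fibre product, hence isomorphisms in $\Spaces$. Its components at loose arrows are isomorphisms built from the coherence data of the monoidal geofibered structure (\autoref{def:MonoidalGeofiberedCat}) and from base change along projection morphisms: the former is invertible by definition, and the latter is invertible because the class of projection morphisms is pull-geolocalizing, a hypothesis of \autoref{prop:ConstructingDoubleCategory}. Consequently, replacing each $\alpha_A$ by its companion $\hat{\alpha_A}$ yields a colax transformation $\hat\alpha$ all of whose comparison $2$-cells are assembled from companion cells and invertible $2$-cells, so $\hat\alpha$ is pseudo-natural; moreover each $\hat{\alpha_A}$ is an equivalence in the horizontal $2$-category, since $\hat{\alpha_A} \fmcomp \widehat{\alpha_A^{-1}} \isomorphic \Od$ by the standard base-change computation showing the kernel of an isomorphism is invertible. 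This is precisely the loosely strong companions condition, so the quoted theorem applies and gives the symmetric monoidal structure as claimed.

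The main obstacle is the bookkeeping hidden in the last two paragraphs: checking that the companion $2$-cells for $\hat f$ satisfy the zig-zag identities, and that the comparison $2$-cells of each $\hat\alpha$ are the ones Hansen--Shulman require and are invertible. Neither is conceptually difficult --- the former unwinds to the triangle identities for $f^* \dashv f_*$, the latter to coherence of base change among projection and diagonal morphisms, all available from the construction in \autoref{prop:ConstructingDoubleCategory} --- but both are notationally heavy, and I would carry them out in the diagrammatic calculus of standard geometric functors rather than symbolically.
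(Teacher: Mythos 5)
Your proposal is correct and follows essentially the same route as the paper: invoke the Hansen--Shulman theorem, build companions by pushing the structure sheaf forward along the graph (using the $\mathfrak{u}_f$-type cells from the unit construction), and deduce pseudo-naturality of $\hat\alpha$ from the fact that the monoidal constraints have components that are isomorphisms of schemes and invertible naturality $2$-cells. The only cosmetic difference is that you assert companions for all vertical morphisms and add the (unneeded) observation that each $\hat{\alpha_A}$ is a horizontal equivalence, whereas the paper only needs, and explicitly tracks, invertibility of the companion cells when the vertical component is an isomorphism.
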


\begin{proof}
First, we construct companions for any isomorphism $f\colon A \to B$. Let
\[
\hat{f} = \dblhor[column sep=huge]{A}{B}{(\Id_A, f)_*(\O_A)} \,.
\]
Recall the definition \autoref{eqn:UnitCompositionNatxform} of the morphism $\mathfrak{u}_f\colon \O_{\Delta_B} \to (f \times f)_*\O_{\Delta_A}$. There is a natural isomorphism
\begin{equation}
	\gamma_1\colon (f \times f)_* \comp i^1_{12} \isomorphic (f \times \Id)_* (\Id_A, f)_* \,. \label{eqn:CompanionProof1}
\end{equation}
so there is a $2$-cell
\[
	\dblcell[column sep=9em]{A}{B}{B}{B}{f}{\Id_B}{\hat{f}}{\O_{\Delta_B}}{\gamma_{1, \O_A} \comp \mathfrak{u}_f} \,.
\]
We also have the $2$-cell
\[
	\dblcell[column sep=9em]{A}{A}{A}{B}{\Id_A}{f}{\O_{\Delta_A}}{\hat{f}}{\gamma_{2, \O_A}}
\]
where $\gamma_2$ is the natural isomorphism $\gamma_2\colon (\Id, f)_* \isomorphic (\Id \times f)_* \comp i^1_{12*}$. This pair of $2$-cells gives the auxiliary data needed for $\hat{f}$ to be a companion of $f$. Verifying the conditions that these diagrams must satisfy is an easy check using Theorem~2.4. Note that the second of these $2$-cells is always invertible; the first is invertible if $f$ is an isomorphism. Thus $\hat{f}$ is a companion of $f$ as required.

Thus it remains to show that the colax functor $\hat{\alpha}$ is in fact a pseudo-natural transformation; that is, that each $2$-cell $\hat{\alpha}_f$ is invertible. This component is defined~\cite{HansenShulman19} as the composition
\[
	\hat{\alpha}_f =
\begin{tikzcd}
	FA\arrow[r,"U_{FA}"{name=E}]\arrow[d,"\Id_{FA}"] & FA\arrow[r,"Ff"{name=F}] \arrow[d, "\alpha_A"] & FB\arrow[r,"\hat{\alpha_B}"{name=G}] \arrow[d,"\alpha_B"] & GB \arrow[d,"\Id_{GB}"] \\
	FA \arrow[r,"\hat{\alpha_A}"{name=P}] & GA \arrow[r,"Gf"{name=Q}] & GB \arrow[r,"U_{GB}"{name=R}] & GB
\arrow[Rightarrow,from=P,to=E,"\eta_{\hat{\alpha_A}}"]
\arrow[Rightarrow,from=Q,to=F,"\alpha_f"]
\arrow[Rightarrow,from=R,to=G,"\epsilon_{\hat{\alpha_A}}"]
\end{tikzcd}
\]
The left and right cells are invertible if $f$ is an isomorphism. The central cell is invertible if the morphism $\alpha_f$ is. These conditions are all met for the data defining the monoidal constraints, so they have loosely strict companions as required.
\end{proof}

\begin{corollary}
The $2$-category $\Var$ has a symmetric monoidal structure.
\end{corollary}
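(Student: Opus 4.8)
The plan is to exhibit $\Var$ as the horizontal $2$-category of the symmetric monoidal double category built in \autoref{prop:ConstructingDoubleCategory}, applied to the geofibered category $F\colon\Shapes\to\Spaces$ in which $\Spaces$ is the category of smooth $k$-schemes, $\Shapes$ has objects the triples $(X,Y,\e)$ with $\e\in\D(X\times Y)$, $F(X,Y,\e)=X\times Y$, and the basic standard geometric functors $f_*,f^*$ are the derived push-forward and pull-back. Once this identification is in place, \autoref{cor:HorizontalCategoryIsMonoidal} supplies the symmetric monoidal structure directly, with $X\tensor Y=X\times Y$ on objects and $(\e,\f)\mapsto\e\boxtensor\f$ on hom-categories; so the content of the corollary is really to verify the hypotheses of \autoref{prop:ConstructingDoubleCategory} for this $F$ and to recognise that the composition it produces is the one defining $\Var$.

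First I would check the hypotheses of \autoref{prop:ConstructingDoubleCategory}. The category of smooth $k$-schemes has terminal object $\pt=\Spec(k)$, and the fibre product over $\pt$ is the product used throughout. For the projection morphisms: each $\pi^K_J\colon X_K\to X_J$ with $J\subset K$ is, after permuting factors, the base change of $\prod_{i\in K\setminus J}X_i\to\pt$ along $X_J\to\pt$, hence flat, since every $k$-scheme is flat over $k$ and flatness is stable under base change. The class of flat morphisms contains the isomorphisms, is closed under composition, and has invertible base-change morphisms against arbitrary morphisms, so it is pull-geolocalizing by \cite[Proposition~3.9.5]{Lipman09}; since a base change of a projection is again a projection, the subclass of projections is itself pull-geolocalizing.

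Next I would endow $F$ with a monoidal structure in the sense of \autoref{def:MonoidalGeofiberedCat}, taking $\boxtensor\colon\D(X)\times\D(Y)\to\D(X\times Y)$ to be the external product $(\e,\f)\mapsto\pi^{XY*}_X(\e)\tensor\pi^{XY*}_Y(\f)$. The associator comes from associativity of $\tensor$ together with pseudofunctoriality of pull-back; the unitor isomorphisms $i^{1*}_{12}(\e\boxtensor\Ox)\isomorphic\e\isomorphic i^{1*}_{12}(\Ox\boxtensor\e)$ follow from $\pi^{12}_i\comp i^1_{12}=\Id$ together with $\f\tensor\Ox\isomorphic\f$; compatibility with inverse images is immediate from pseudofunctoriality of pull-back; and compatibility with direct images, $(f\times g)_*(\e\boxtensor\f)\isomorphic f_*(\e)\boxtensor g_*(\f)$, is the projection formula combined with flat base change along product projections, i.e. the concrete statement of which \autoref{lem:ProjectionFormulaExterior} is the abstract version (Reich~\cite{Reich14}). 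The pentagon, unit triangles, and the two hexagon-shaped compatibility diagrams of \autoref{def:MonoidalGeofiberedCat} then reduce to coherence of the symmetric monoidal structure on $\D(-)$ and naturality of base change; I would assert these without expanding them, as the paper does for \autoref{prop:ConstructingDoubleCategory} itself.

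With the hypotheses verified, \autoref{prop:ConstructingDoubleCategory} produces a symmetric monoidal double category whose objects are smooth schemes, whose horizontal $1$-morphisms $A\to B$ are the objects of $\D(A\times B)$, whose globular $2$-morphisms are morphisms in these derived categories with vertical composition the usual composition, and whose horizontal composition is $\e\fmcomp\f=\pi^{ABC}_{AC*}\comp i^{124*}_{1234}(\e\boxtensor\f)$, which unwinds to $\pi^{ABC}_{AC*}(\pi^{ABC*}_{AB}(\e)\tensor\pi^{ABC*}_{BC}(\f))$ --- precisely the composition law defining $\Var$. Thus the horizontal $2$-category of this double category is $\Var$, and \autoref{cor:HorizontalCategoryIsMonoidal} endows it with a symmetric monoidal structure of the stated form. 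The main obstacle is the same as for \autoref{prop:ConstructingDoubleCategory}: the bookkeeping needed to confirm that the monoidal geofibered coherence diagrams commute, and that the coherence $2$-cells supplied by \autoref{prop:ConstructingDoubleCategory} satisfy the monoidal-bicategory axioms once transported through the identification of $\Var$ with this horizontal $2$-category --- routine but notation-heavy, so I would only sketch them.
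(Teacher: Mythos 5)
Your proposal is correct and follows essentially the same route as the paper: verify that the geofibered category of derived sheaves over smooth schemes has a terminal object $\pt=\Spec(k)$, that the projections are flat and hence pull-geolocalizing, and that the external product gives a monoidal structure, then invoke \autoref{prop:ConstructingDoubleCategory} and \autoref{cor:HorizontalCategoryIsMonoidal}. You simply carry out the hypothesis checks (flatness via base change over $k$, the identification of $\e\fmcomp\f$ with the Fourier--Mukai composition defining $\Var$) in more detail than the paper, which states them in a few lines.
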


\begin{proof}
Consider the geofibered category of derived complexes of sheaves over smooth schemes. The category of spaces has a terminal object (the scheme with a single point, $\pt = \Spec(k)$), and the projection maps are flat, and hence form a pull-geolocalising class. The external tensor product gives this geofibered category a monoidal structure, so the result follows from \autoref{prop:ConstructingDoubleCategory} and \autoref{cor:HorizontalCategoryIsMonoidal}
\end{proof}
\section{Frobenius algebra objects}\label{sec:fao}


It is well-known~\cite{Kock2003} that $(1+1)$-TQFTs valued in the category of vector spaces are classified by Frobenius algebras. This proof immediately generalises to the case when the target category is any monoidal category. In this case, TQFTs are classified by \emph{Frobenius algebra objects}.

\begin{definition}\cite{Kock2003} Let $\C$ be a monoidal category with unit $I$. A Frobenius algebra object is a tuple $(A, \mu, \delta, \epsilon, \tau)$ such that:
\begin{enumerate}
    \item the tuple $(A, \mu, \epsilon)$ is a unital monoid with multiplication $\mu\colon A \tensor A \to A$ and unit $\epsilon\colon I \to A$; and
    \item the tuple $(A, \delta, \tau)$ is a counital comonoid with comultiplication $\delta\colon A \to A \tensor A$ and counit $\tau\colon A \to I$; and
    \item the Frobenius identities hold:
    \begin{align} (\Id_A \tensor \mu) \comp (\delta \tensor \Id_A) = \mu \comp \delta = (\mu \tensor \Id_A) \comp (\Id_A \tensor \delta)\,. \label{eqn:frobidentities}\end{align}
\end{enumerate}
\end{definition}

A $(1+1+1)$-TQFT induces a $(1+1)$-TQFT by truncation; that is, by considering $2$-manifolds only up to diffeomorphism, and taking the isomorphism class of the image of this object. We formalise this folklore process below as a first step to investigating the possible construction of a $(1+1+1)$-TQFT.

\begin{definition}Let $2\catname{-Cat}$ be the $1$-category of $2$-categories and functors, and let $\catname{Cat}$ be the $1$-category of $1$-categories and functors. For $\C \in 2\catname{-Cat}$, let  $\HC$ be the $1$-category with the same objects as $\C$, but with $1$-morphisms given by isomorphism classes of $1$-morphisms in $\C$. For a functor $\F\colon \C \to \D \in \Hom_{2\catname{-Cat}}(\C, \D)$, define
\begin{align*}
	\H\F: \HC &\to \H\D \,, \\
		x &\mapsto \F(x) \,, \\
		[f] &\mapsto [\F(f)] \,,
\end{align*}
where $[f]$ denotes the isomorphism class of the morphism $f$. This defines the core truncation functor
\begin{align*}
	\H\colon 2\catname{-Cat} &\to \catname{Cat} \,.
\end{align*}
\end{definition}

\begin{lemma}Let $\C$ be a $2$-category and $\HC$ be its core truncation. A $(1+1+1)$-TQFT $Z\colon \Bord_{1+1+1} \to \C$ induces a $(1+1)$-TQFT valued in the category $\HC$.\end{lemma}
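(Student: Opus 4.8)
The plan is to carry out the classical ``truncation'' argument for TQFTs in the setting of $2$-categories, making explicit the two things one must check: that the assignment on bordisms descends to diffeomorphism classes, and that the resulting functor is symmetric monoidal. First I would recall that $\Bord_{1+1+1}$ has objects closed $1$-manifolds, $1$-morphisms $2$-dimensional bordisms, and $2$-morphisms (up to iso) $3$-dimensional bordisms; the truncation $\Bord_{1+1}$ is the usual $1$-category of closed $1$-manifolds and diffeomorphism classes of $2$-bordisms. The key observation is that two $2$-bordisms $W_0, W_1\colon \Sigma_{\mathrm{in}} \to \Sigma_{\mathrm{out}}$ are diffeomorphic (rel boundary) if and only if there is an invertible $3$-bordism between them; applying $Z$ to such an invertible $3$-morphism yields an isomorphism $Z(W_0) \isomorphic Z(W_1)$ in the relevant hom-category of $\C$. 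Hence $[Z(W_0)] = [Z(W_1)]$ in $\HC$, so the composite $\H Z \comp (\text{inclusion of isomorphism classes})$ is well-defined on $\Bord_{1+1}$.

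Next I would assemble this into a functor. Define $\overline{Z}\colon \Bord_{1+1} \to \HC$ on objects by $\overline{Z}(\Sigma) = Z(\Sigma)$ and on a diffeomorphism class $[W]$ of $2$-bordisms by $\overline{Z}([W]) = [Z(W)] \in \Hom_{\HC}$; the previous paragraph shows this is independent of the representative $W$. Functoriality (preservation of composition and identities) follows from the functoriality of $Z$ together with the fact that $\H$ sends the coherence isomorphisms of the (weak) $2$-functor $Z$ to equalities in $\HC$ --- indeed, the compositor and unitor $2$-morphisms of $Z$ become trivial once we pass to isomorphism classes of $1$-morphisms. This is exactly the content of $\H$ being a functor $2\catname{-Cat} \to \catname{Cat}$ applied to $Z$, after first precomposing with the evident (non-full) functor $\Bord_{1+1} \to \HBord_{1+1+1}$ that realises $2$-bordisms as $1$-morphisms and records their diffeomorphism class.

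Finally I would check the symmetric monoidal structure. The monoidal product on $\Bord$ is disjoint union, and $Z$ is a symmetric monoidal $2$-functor, so it comes with (coherent, invertible) $2$-morphisms $Z(\Sigma) \tensor Z(\Sigma') \isomorphic Z(\Sigma \disjointunion \Sigma')$ and a unit comparison $I \isomorphic Z(\emptyset)$, plus braiding compatibility. Passing to $\HC$, these invertible comparison $2$-cells collapse: they show $\overline{Z}(\Sigma \disjointunion \Sigma')$ and $\overline{Z}(\Sigma) \tensor \overline{Z}(\Sigma')$ are literally equal objects of $\HC$ up to the chosen isomorphism, and the hexagon/coherence diagrams, being diagrams of $2$-morphisms, commute automatically after truncation. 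Thus $\overline{Z}$ is a symmetric monoidal functor, i.e.\ a $(1+1)$-TQFT valued in $\HC$.

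The step I expect to require the most care is the first one: justifying that diffeomorphic $2$-bordisms are related by an invertible $3$-morphism in $\Bord_{1+1+1}$ and that $Z$ sends it to an isomorphism. The cleanest route is to use the cylinder $\Sigma \times [0,1]$ construction --- a diffeomorphism $\phi\colon W_0 \to W_1$ fixing the boundary gives a mapping cylinder $3$-bordism which is invertible in $\Bord_{1+1+1}$ (its inverse coming from $\phi^{-1}$), and any (weak) $2$-functor preserves invertible $2$-morphisms, so $Z$ of it is an isomorphism in the hom-category. Everything after that is the routine bookkeeping of transporting a symmetric monoidal weak $2$-functor through the core-truncation functor $\H$, which I would not spell out in detail.
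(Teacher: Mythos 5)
Your proposal is correct and takes essentially the same route as the paper: the paper's proof likewise observes that a diffeomorphism of $2$-bordisms induces an invertible $2$-morphism in $\Bord_{1+1+1}$, so that there is a quotient functor $\Bord_{1+1} \to \HBord_{1+1+1}$, and then defines the induced TQFT as the composite with $\H Z$. You simply spell out in more detail (mapping cylinders, monoidality of the truncation) what the paper leaves implicit.
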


\begin{proof}
The category $\HBord_{1+1+1}$ is a quotient category of $\Bord_{1+1}$ (since $1$-morphisms in this latter category are bordisms considered up to diffeomorphism, and a diffeomorphism induces an invertible $2$-morphism in $\Bord_{1+1+1}$). The composition
\[ \Bord_{1+1} \to \HBord_{1+1+1} \xrightarrow{HZ} \HC \]
gives the induced $(1+1)$-TQFT.
\end{proof}

\subsection{Constructing Frobenius algebra objects}

\begin{lemma}\label{lem:monoidstructure} Let $X$ be a scheme. Recall $i^1_{123}\colon X \to X^3$ is the triagonal map. Let $\Odd = i^1_{123*} (\Ox)$ be the triagonal sheaf. Then $X \in \HVar$ can be given the structure of a monoid object by taking $\Odd$ as the multiplication map and $\O_{\pt \times X}$ to be the unit.\end{lemma}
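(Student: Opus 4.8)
The plan is to unwind what a monoid object in the monoidal $1$-category $\HVar$ is, and then verify the resulting isomorphisms of Fourier--Mukai kernels by the base-change and projection-formula calculus developed in \autoref{sec:VarConstruction}.

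First I would fix the shape of the problem. Since $\Var$ is symmetric monoidal (\autoref{cor:HorizontalCategoryIsMonoidal}), so is its core truncation $\HVar$, with $\tensor = \times$ on objects, $\boxtensor$ on $1$-morphisms, composition given by Fourier--Mukai composition $\fmcomp$, and monoidal unit $\pt = \Spec(k)$; under the canonical identifications of iterated products with powers $X^n$ its associator and unitors are induced by $\alpha_\boxtensor$ and are trivial. Recalling that $X$, being an object of $\Var$, is smooth, and taking $\mu := \Odd = i^1_{123*}(\Ox) \in \Hom_{\HVar}(X \times X, X) = \D(X^3)$ and $\epsilon := \O_{\pt \times X} \in \Hom_{\HVar}(\pt, X) = \D(\pt \times X)$ (which is $\Ox$ under $\pt \times X \cong X$), a monoid structure on $X$ then amounts precisely to an associativity isomorphism $(\mu \boxtensor \Od) \fmcomp \mu \isomorphic (\Od \boxtensor \mu) \fmcomp \mu$ in $\D(X^4)$ and unit isomorphisms $(\epsilon \boxtensor \Od) \fmcomp \mu \isomorphic \Od \isomorphic (\Od \boxtensor \epsilon) \fmcomp \mu$ in $\D(X^2)$, each compatible with the relevant coherence isomorphism; here $\Od = i^1_{12*}(\Ox)$ is the diagonal sheaf representing $\Id_X$. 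Since $\HVar$ has no $2$-cells, no further conditions appear.

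The engine of the argument is that every $1$-morphism occurring here --- the generators $\Odd$, $\Od$, $\epsilon$, and everything built from them by $\boxtensor$ and $\fmcomp$ --- is the direct image of a structure sheaf $\O_{X^m}$ along an inclusion morphism, i.e. along a closed immersion $X^J \hookrightarrow X^I$ cut out by diagonal equations and recorded by a partition of $I$. Using distributivity of direct image over $\boxtensor$ together with the isomorphisms $\Ox \boxtensor \Oy \isomorphic \O_{XY}$ and $f^*(\Oy) \isomorphic \Ox$ supplied by the monoidal geofibered structure, $\mu \boxtensor \Od$ and $\Od \boxtensor \mu$ become single such pushforwards into $X^5$; substituting into the Fourier--Mukai formula $\e \fmcomp \f = \pi^{ABC}_{AC*} \comp i^{124*}_{1234}(\e \boxtensor \f)$ and distributing again, each composite is the direct image of a structure sheaf along an inclusion morphism followed by a projection. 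A chain of base-change isomorphisms then collapses each composite to the direct image of $\Ox$ along a single inclusion morphism, and one reads off that in both cases all four surviving coordinates of $X^4$ are forced equal: for $(\mu \boxtensor \Od) \fmcomp \mu$ because $\mu \boxtensor \Od$ imposes $x_1 = x_2 = y$ and $x_3 = y'$ while the outer $\mu$ imposes $y = y' = z$, and symmetrically for $(\Od \boxtensor \mu) \fmcomp \mu$. Hence both composites equal the quadragonal kernel $i^1_{1234*}(\Ox)$, giving associativity. The unit laws are the same computation, shorter: $\epsilon \boxtensor \Od$ represents the functor $\e \mapsto \Ox \boxtensor \e$, and composing it with $\mu$ via the Fourier--Mukai formula, \autoref{lem:ProjectionFormulaSpecialised}, and the unitor $\Delta^*(\Ox \boxtensor \e) \isomorphic \e$ of \autoref{def:MonoidalGeofiberedCat} collapses $(\epsilon \boxtensor \Od) \fmcomp \mu$ to $\Od$; the right unit law is symmetric.

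The main obstacle is organisational rather than conceptual. The genuine mathematical input is just that Fourier--Mukai composition of these diagonal kernels reproduces a smaller diagonal, which is formal from base change and the projection formula --- together with the one point that the base-change squares involved are actually exact, i.e. that the diagonals of the smooth powers of $X$ in play are Tor-independent (the codimensions of the loci being intersected add up), which is where smoothness enters. The remaining work is purely bookkeeping: tracking which Cartesian squares, and which unitor and associator $2$-morphisms, are invoked, so as to confirm that the composites land on the claimed kernels on the nose and that the connecting isomorphisms are the ones induced by $\alpha_\boxtensor$ and the structure-sheaf unitors rather than some twist of them. As in the proof of \autoref{prop:ConstructingDoubleCategory}, I would display the relevant diagrams and identifications but omit the routine verification that the pasted natural transformations agree, since it is cumbersome and offers little insight.
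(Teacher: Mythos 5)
Your proposal is correct and follows essentially the same route as the paper: rewrite every kernel in sight as the pushforward of a structure sheaf along a diagonal-type inclusion, then collapse the Fourier--Mukai composites by distributivity of direct image over $\boxtensor$ and base change, identifying both associativity composites with the quadragonal kernel $i^1_{1234*}(\Ox)$ and the unit composites with $\Od$. If anything you are more careful than the paper, which writes out only the left unit law (deferring the rest to ``follows similarly'') and whose base-change step pairs two closed immersions, neither of which is flat, so that it is not literally covered by the pull-geolocalizing hypothesis and genuinely needs the Tor-independence of the diagonals that you flag.
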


\begin{remark}\label{rem:triagisdiagpullback}%
It is worth noting the intuition behind choosing the sheaf $\Odd$ to represent the multiplication. Huybrechts~\cite{Huybrechts06} shows that if $f\colon X \to Y$ and $\Gamma_f = (\Id_X, f)\colon X \to X \times Y$, then the Fouier--Mukai transform with kernel ${\Gamma_f}_*(\Ox) \in D(Y \times X)$ is the inverse image functor $f^*$. In particular, the Fouier--Mukai transform associated to $\Odd$ is the inverse image functor $i^{1*}_{12}$.

If $\e, \f \in \D(X)$ then $i^{1*}_{12}(\e \boxtensor \f) = \e \tensor f$, so this choice of multiplication recovers the tensor product on the subcategory $\D(X) \tensor \D(X) \subset \D(X \times X)$. A similar calculation shows that the Fourier--Mukai transform corresponding to the unit is the pullback functor $\pi^{1*}_\emptyset\colon \D(\pt) \to \D(X)$. 
\end{remark}

\begin{proof}
We first prove that the multiplication is unital; that is, that
\begin{equation}
	\Odd \fmcomp (\Od \boxtensor \Ox) \isomorphic \Od
	\label{eqn:MonoidStructureProof1}
\end{equation}
By \autoref{def:MonoidalGeofiberedCat}, there is an isomorphism
\[ i^1_{12*}(\Ox) \boxtensor \Ox \isomorphic i^{13}_{12,3*}(\Ox \boxtensor \Ox) \isomorphic i^{13}_{12,3*}(\Oxx) \,.
\]
Hence the left-hand side of \autoref{eqn:MonoidStructureProof1} can be written as
\[
	i^4_{456*}(\Ox) \fmcomp i^{13}_{12,3*}(\Oxx) = \pi^{1236}_{16*} \comp i^{1236*}_{1,24,35,6}\left( i^4_{456*}(\Ox) \boxtensor i^{13}_{123*}(\Oxx) \right)\,.
\]
Now
\begin{align*}
i^4_{456*}(\Ox) \boxtensor i^{13}_{123*}(\Oxx)
	&\isomorphic (i^4_{456} \times i^{13}_{123})_* (\Ox \boxtensor \Oxx) \\
	&= i^{134}_{123456*}(\O_{X^3})
\end{align*}
so
\begin{align*}
i^4_{456*}(\Ox) \fmcomp i^{13}_{12,3*}(\Oxx)
	&\isomorphic \pi^{1236}_{16*} \comp i^{1236*}_{1,24,35,6} \comp i^{134}_{12,3,456*}(\O_{X^3}) \\
	&\isomorphic \pi^{1236}_{16*} \comp i^{1}_{123456*} \comp i^{1*}_{134}(\O_{X^3}) \\
	&\isomorphic i^1_{16*}(\Ox)
\end{align*}
where in the second line we apply a base-change isomorphism, and in the third the composition of push-forward functors. Thus in $\HVar$, we have $[\Odd] \comp [\Od \boxtensor \Ox] = [\Od]$ as required.

The right unital identity follows similarly, as does associativity.
\end{proof}

\begin{lemma}Let $X$ be a scheme and let $\Odd = i^1_{123*} (\Ox)$ be the triagonal sheaf. Then $X \in \HVar$ can be given the structure of a comonoid object by taking $\Odd$ to be the comultiplication map and $\O_{X \times \pt}$ to be the counit.\end{lemma}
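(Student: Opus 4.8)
The plan is to deduce this lemma from \autoref{lem:monoidstructure} by a symmetry argument, so that essentially no new computation is needed. The key point is that $\Var$ (equivalently, the horizontal $2$-category of \autoref{prop:ConstructingDoubleCategory}) carries an involution $(-)^\dagger$ which is the identity on objects and which sends a $1$-morphism $\e \in \D(X \times Y) = \Hom_\Var(X, Y)$ to $\e^\dagger \in \D(Y \times X) = \Hom_\Var(Y, X)$ by pushing forward along the swap isomorphism $X \times Y \isomorphic Y \times X$. Since the Fourier--Mukai composition $\e \fmcomp \f = \pi^{ABC}_{AC*} \comp i^{124*}_{1234}(\e \boxtensor \f)$ and the external product $\boxtensor$ are assembled only from projections, inclusions of diagonal type and $\boxtensor$ itself, all of which are equivariant under relabelling factors, one checks that $(-)^\dagger$ reverses horizontal composition, $(\e \fmcomp \f)^\dagger \isomorphic \f^\dagger \fmcomp \e^\dagger$, is monoidal, $(\e \boxtensor \f)^\dagger \isomorphic \e^\dagger \boxtensor \f^\dagger$, and preserves units; after passing to isomorphism classes it descends to a symmetric monoidal functor $\HVar^{op} \to \HVar$ fixing objects.

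Then I would apply $(-)^\dagger$ to the monoid object $(X, [\Odd], [\O_{\pt \times X}])$ produced by \autoref{lem:monoidstructure}. A symmetric monoidal functor that is the identity on objects and reverses $1$-morphism composition carries unital monoids to counital comonoids: the multiplication $\mu = [\Odd] \colon X \tensor X \to X$ is sent to a comultiplication $\mu^\dagger \colon X \to X \tensor X$, the unit $\epsilon = [\O_{\pt \times X}] \colon \pt \to X$ is sent to a counit $\epsilon^\dagger \colon X \to \pt$, and the unit and associativity axioms of the monoid become exactly the counit and coassociativity axioms of the comonoid. Next I would identify the transported data. Because the triagonal map $i^1_{123}$ is invariant under permutations of its three target factors, the sheaf $\Odd = i^1_{123*}(\Ox)$ is fixed, up to canonical isomorphism, by the pushforward along any such permutation; in particular $\mu^\dagger$ is again represented by $\Odd$, now viewed as an object of $\D(X_1 \times X_2 X_3) = \Hom_\Var(X, X \tensor X)$. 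Similarly the swap carries $\O_{\pt \times X}$ to $\O_{X \times \pt}$, so $\epsilon^\dagger = [\O_{X \times \pt}]$, which is the claimed counit.

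The step requiring the most care, though it involves no genuinely new idea beyond what already underlies \autoref{prop:ConstructingDoubleCategory}, is verifying that $(-)^\dagger$ really is a symmetric monoidal functor reversing composition: one must check that the scheme-level swap isomorphisms are compatible with the iterated external products, direct images and diagonal-type inclusions appearing in the definitions of $\fmcomp$ and $\boxtensor$, which is a routine naturality check. Alternatively, and perhaps more in keeping with the proof of \autoref{lem:monoidstructure}, one can repeat that argument verbatim with the roles of the ``source'' and ``target'' factors of $\Odd$ interchanged: unravelling $\Odd \fmcomp$ applied to $\Od$ and to $\O_{X \times \pt}$ and simplifying by the same base-change and composition-of-pushforward isomorphisms yields $[\Od]$, and likewise for the second counit identity and for coassociativity.
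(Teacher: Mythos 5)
Your proposal is correct, but it takes a different route from the paper: the paper's entire proof of this lemma is the remark that ``these calculations can be performed in a manner similar to those in \autoref{lem:monoidstructure}'', i.e.\ it simply redoes (or asserts one can redo) the explicit kernel computations of the monoid case with the roles of source and target factors exchanged --- exactly your ``alternative'' second route. Your primary route instead packages the symmetry once and for all as an involution $(-)^\dagger$ of $\Var$, given on $1$-morphisms by pushforward along the swap $X \times Y \isomorphic Y \times X$, checks that it reverses Fourier--Mukai composition and is compatible with $\boxtensor$, and then transports the monoid object of \autoref{lem:monoidstructure} to a comonoid object; since $i^1_{123}$ is invariant under permuting its target factors, $\Odd$ is carried to itself viewed in $\Hom_\Var(X, X \tensor X)$, and $\O_{\pt \times X}$ to $\O_{X \times \pt}$, which identifies the transported data with the claimed comultiplication and counit. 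This is a legitimate and arguably cleaner argument: coassociativity and counitality then follow formally rather than by repeating base-change manipulations, and the dagger functor would be reusable elsewhere (for instance in discussing the self-duality built into the Frobenius object of \autoref{prop:diagonaltqft}). What it costs is the verification that $(-)^\dagger$ really is a composition-reversing monoidal functor on $\HVar$ --- the identity $(\e \fmcomp \f)^\dagger \isomorphic \f^\dagger \fmcomp \e^\dagger$ and compatibility with the external product --- which is a naturality check of roughly the same flavour and length as the direct computation the paper has in mind; you correctly flag this as the one step needing care, and since you also sketch the direct verification as a fallback, there is no gap.
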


\begin{proof}These calculations can be performed in a manner similar to those in \autoref{lem:monoidstructure}.\end{proof}

\begin{proposition}\label{prop:diagonaltqft} Let $X$ be a scheme over $k$ and $\Odd = i^{1}_{123*}(\Ox)$. Then the tuple $(X, \Odd, \Odd, \O_{\pt \times X}, \O_{X \times \pt})$ is a Frobenius algebra object in $\HVar$.\end{proposition}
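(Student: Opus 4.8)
The plan is to deduce the statement from the two preceding lemmas together with a direct verification of the Frobenius identities. By \autoref{lem:monoidstructure}, $(X, \Odd, \O_{\pt \times X})$ is a unital monoid object in $\HVar$, and by the lemma immediately preceding this proposition, $(X, \Odd, \O_{X \times \pt})$ is a counital comonoid object; so the only thing left to check is the Frobenius identities \eqref{eqn:frobidentities}. Taking $\mu = \delta = \Odd$ and $\Id_X = \Od$, these amount to showing that the three $1$-morphisms $X \tensor X \to X \tensor X$ in $\HVar$
\[
	(\Od \tensor \Odd) \comp (\Odd \tensor \Od), \qquad \delta \comp \mu, \qquad (\Odd \tensor \Od) \comp (\Od \tensor \Odd) \,,
\]
are isomorphic, where $\comp$ is the Fourier--Mukai composition $\fmcomp$, $\tensor = \boxtensor$ is the external product on $1$-morphisms, and the middle composite is $\delta$ after $\mu$ (i.e. first $\mu$, then $\delta$, with intermediate object $X$), so concretely it is $\Odd \fmcomp \Odd$. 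The distributivity correction appearing in the monoidal product of $2$-cells changes neither the source, the target, nor the isomorphism class of the underlying $1$-morphism, so it is invisible in $\HVar$ and plays no role here.

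I would compute each of the three composites by unwinding the definitions of $\boxtensor$ and $\fmcomp$ and then applying exactly the three manipulations used in the proof of \autoref{lem:monoidstructure}: distributivity of pushforward over $\boxtensor$, which turns an external product such as $\Odd \boxtensor \Odd$ or $\Odd \boxtensor \Od$ into the pushforward of $\O_{X^2}$ along an iterated diagonal $X^2 \hookrightarrow X^n$; base change along the Cartesian square formed from such an iterated diagonal and the inclusion morphism being pulled back, using that the scheme-theoretic intersection of two diagonals is again a smaller diagonal and that $\Ox$ pulls back to $\Ox$; and composition of pushforwards $\pi_* \comp i_* = (\pi \comp i)_*$ to absorb the surviving projection. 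The outcome in all three cases is that the composite is isomorphic to the pushforward $i^1_{1234*}(\Ox)$ of $\Ox$ along the small diagonal $X \hookrightarrow X^4$; hence the three composites coincide in $\HVar$, which is \eqref{eqn:frobidentities}, and $(X, \Odd, \Odd, \O_{\pt \times X}, \O_{X \times \pt})$ is a Frobenius algebra object.

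The main obstacle is bookkeeping rather than anything conceptual: one must keep track, at each stage of each composite, of which of the many product factors are being identified, so that the Cartesian squares are set up correctly and the base-change transformations really are isomorphisms (the hypotheses that make them isomorphisms in \autoref{lem:monoidstructure} apply verbatim here, since every map that occurs is a projection or an iterated diagonal of $X$). Since the three computations are entirely parallel --- and the two ``zig-zag'' composites are moreover interchanged by the symmetric braiding of $\Var$ together with the invariance of $\Odd$ under permuting its three factors --- I would write out $\Odd \fmcomp \Odd$, the shortest, in full detail and indicate that the remaining two go through in the same way.
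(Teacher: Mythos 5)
Your proposal follows the paper's proof essentially verbatim: reduce to the Frobenius identities via the two preceding lemmas, then compute $\Odd \fmcomp \Odd$ and the zig-zag composites by distributing pushforwards over $\boxtensor$, applying base change to the resulting iterated diagonals, and composing pushforwards, obtaining the small diagonal pushforward $X \hookrightarrow X^4$ of $\Ox$ in each case. The paper likewise writes out one zig-zag explicitly and dismisses the other by symmetry, so your plan matches both the strategy and the level of detail of the actual proof.
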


In the remainder of the paper, the TQFT corresponding to this Frobenius algebra object will be denoted $Z_X$.

The claim that this construction gives a TQFT was originally given by Roberts and
Willerton~\cite{RobertsWillerton2003}. They expressed the product map as taking the tensor product with the diagonal sheaf and pushing forward; this can be seen to be equivalent to applying $i^{1*}_{12}$ since
\begin{align*}
\pi^{12}_{2*}(\e \tensor \Od) = \pi^{12}_{2*}(\e \tensor i^{1}_{12*}(\Ox))
    &\isomorphic \pi^{12}_{2*} \comp i^{1}_{12*} \comp i^{1*}_{12}(\e)
    = i^{1*}_{12}(\e) \,,
\end{align*}
where the isomorphism follows from the projection formula \autoref{lem:ProjectionFormulaSpecialised}. From \autoref{rem:triagisdiagpullback}, this functor has Fourier--Mukai kernel $\Odd$.

\begin{proof}
We have seen that this tuple gives rise to a commutative monoid and a cocommutative comonoid, so it remains to show that the Frobenius identities hold.

Consider first the composition $\Odd \fmcomp \Odd = i^{1}_{123*}(\Ox) \fmcomp i^4_{456*}(\Ox)$. We have an isomorphism
\[
	i^{1}_{123*}(\Ox) \boxtensor i^4_{456*}(\Ox) \isomorphic i^{14}_{123,456*}(\Oxx) \,.
\]
so the composition is given by
\begin{align*}
	i^{1}_{123*}(\Ox) \fmcomp i^4_{456*}(\Ox)
		&\isomorphic \pi^{13456}_{1346*} \comp i^{12356*}_{1,2,34,5,6} \comp i^{14}_{123,456*}(\Oxx) \\
		&\isomorphic \pi^{13456}_{1346*} \comp i^1_{13456*} \comp i^{1*}_{14}(\Oxx) \\
		&\isomorphic i^1_{1346*}(\Ox)
\end{align*}
where in the second line we use a base-change isomorphism.

We now simplify $(\Od \boxtensor \Odd) \fmcomp (\Odd \boxtensor \Od)$ to the same form. The two sheaves that we are composing are given by
\[
	\Od \boxtensor \Odd \isomorphic i^{01}_{023,14*}(\Oxx)
\]
and
\[
	\Odd \boxtensor \Od \isomorphic i^{89}_{58,679*}(\Oxx) \,.
\]
The external product of these is given by
\[
	(\Od \boxtensor \Odd) \boxtensor (\Odd \boxtensor \Od) \isomorphic i^{0189}_{023,14,58,679*}(\O_{X^4})
\]
Hence the composition is given by
\begin{align*}
(\Od \boxtensor \Odd) \fmcomp (\Odd \boxtensor \Od)
	&\isomorphic \pi^{0123489}_{0189*} \comp i^{0123489*}_{0,1,25,36,47,8,9} \comp i^{0189}_{023,14,58,679*}(\O_{X^4}) \\
	&\isomorphic \pi^{0123489}_{0189*} \comp i^0_{0123489*} \comp i^{0*}_{0189}(\O_{X^4}) \\
	&\isomorphic i^{0}_{0189*}(\Ox)
\end{align*}
where in the second line we use a base-change isomorphism. Hence the left Frobenius identity holds; the right identity follows similarly.
\end{proof}

\subsection{Properties of Frobenius algebra objects}

\begin{remark}
Until this point, all calculations could have been performed using only the information from the geofibered category structure. From now on, we will use results specific to the category $\Var$.
\end{remark}

\begin{lemma}Let $\Sigma_g$ be a genus $g$ surface. Then
\[ Z_X(\Sigma_g) \isomorphic H^*\left(\Od^{\tensor(g+1)}\right) \,, \]
where we view the right-hand side as a chain complex
\[
	H^n(\Od^{\tensor(g+1)}) \xrightarrow{0} H^{n-1}(\Od^{\tensor(g+1)}) \xrightarrow{0} \cdots \xrightarrow{0} H^0(\Od^{\tensor(g+1)}) \,.
\]
\end{lemma}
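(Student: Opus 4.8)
The plan is to compute $Z_X(\Sigma_g)$ by cutting the genus $g$ surface into elementary bordisms and propagating Fourier--Mukai kernels. Regard $\Sigma_g$ as a bordism $\emptyset \to \emptyset$ and write it in the standard way as a cup $\emptyset \to S^1$, then $g$ copies of the handle $S^1 \to S^1$ (each built from a reverse pair of pants followed by a pair of pants), then a cap $S^1 \to \emptyset$; this is exactly the decomposition underlying the Frobenius-algebra classification of $(1+1)$-TQFTs invoked in \autoref{sec:fao}. Under $Z_X$ the cup and cap are the unit $\O_{\pt \times X}$ and counit $\O_{X \times \pt}$, and the handle is $\Odd \fmcomp \Odd$. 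Since $Z_X(\Sigma_g)$ is a $1$-morphism $\pt \to \pt$, it is (the isomorphism class of) an object of $\D(\pt)$, and by \autoref{rem:kernelsnotunique} it is the value at $\O_\pt$ of the associated Fourier--Mukai transform; hence it suffices to compose the transforms of the pieces. By \autoref{rem:triagisdiagpullback}, the transform of the multiplication $\Odd\colon X^2\to X$ is the inverse image $i^{1*}_{12}$ and that of the comultiplication $\Odd\colon X\to X^2$ is the direct image $i^1_{12*}$ along the diagonal; the transform of the cup is the pullback $\pi^*\colon \D(\pt)\to\D(X)$ along the structure map $\pi\colon X\to\pt$, and that of the cap is the pushforward $\pi_*\colon \D(X)\to\D(\pt)$. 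Composing, $Z_X(\Sigma_g)\isomorphic \pi_*\big((i^{1*}_{12}\comp i^1_{12*})^g(\Ox)\big)$.

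The next step is to recognise the handle functor $i^{1*}_{12}\comp i^1_{12*}$ as tensoring by a single object. The projection formula gives $i^1_{12*}(\f)\isomorphic \Od\tensor(\pi^{12}_1)^*(\f)$ for $\f\in\D(X)$; applying $i^{1*}_{12}$ and using $\pi^{12}_1\comp i^1_{12}=\Id_X$ yields $i^{1*}_{12}(i^1_{12*}(\f))\isomorphic \f\tensor\omega$, where $\omega:=i^{1*}_{12}(\Od)\in\D(X)$. Iterating $g$ times and then pushing to a point, $Z_X(\Sigma_g)\isomorphic \pi_*(\omega^{\tensor g})\isomorphic H^*\big(X,\,i^{1*}_{12}(\Od^{\tensor g})\big)$, since $i^{1*}_{12}$ is monoidal and so $\omega^{\tensor g}\isomorphic i^{1*}_{12}(\Od^{\tensor g})$. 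One last application of the projection formula identifies this with the claimed expression: $\Od^{\tensor(g+1)}=\Od^{\tensor g}\tensor i^1_{12*}(\Ox)\isomorphic i^1_{12*}\big(i^{1*}_{12}(\Od^{\tensor g})\big)$, and since $H^*(X^2,\,i^1_{12*}(-))\isomorphic H^*(X,\,-)$ (the structure map of $X$ factors through $i^1_{12}$), we obtain $H^*(\Od^{\tensor(g+1)})\isomorphic H^*\big(X,\,i^{1*}_{12}(\Od^{\tensor g})\big)\isomorphic Z_X(\Sigma_g)$.

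It remains to put this in the stated form. As $Z_X(\Sigma_g)$ is an object of $\D(\pt)$, the derived category of $k$-vector spaces, and $k$ is a field, it is formal: it is isomorphic to $\bigdirectsum_n H^n(\Od^{\tensor(g+1)})[-n]$ with vanishing differentials, which is precisely the chain complex in the statement. The only genuinely delicate point is the bookkeeping in the first paragraph --- matching each Fourier--Mukai kernel with the correct functor and fixing composition conventions so that the handle functor really is $i^{1*}_{12}\comp i^1_{12*}$ and not its opposite; once that is pinned down, the rest reduces to two applications of the projection formula.
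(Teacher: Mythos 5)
Your proof is correct, but it slices the surface differently from the paper. The paper's decomposition (\autoref{fig:decomposegsurface}) keeps \emph{two} boundary circles throughout the middle of the surface: it is a pair of cups, then $g+1$ copies of (reverse pair of pants followed by pair of pants) viewed as a bordism $S^1 \sqcup S^1 \to S^1 \sqcup S^1$, then a pair of caps. The handle operator is therefore the endofunctor $\e \mapsto \e \tensor \Od$ of $\D(X\times X)$, and applying it $g+1$ times to $\Oxx$ produces $\Od^{\tensor(g+1)}$ on $X\times X$ immediately, so that $Z_X(\Sigma_g) \isomorphic \pi^{12}_{\emptyset*}(\Od^{\tensor(g+1)})$ drops out with no further manipulation. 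You instead use the textbook one-circle decomposition, so your handle operator is $\f \mapsto \f \tensor \omega$ on $\D(X)$ with $\omega = i^{1*}_{12}(\Od)$, and you then need two extra applications of \autoref{lem:ProjectionFormulaSpecialised} (plus monoidality of $i^{1*}_{12}$ and the identification $H^*(X^2, i^1_{12*}(-)) \isomorphic H^*(X,-)$) to repackage $\pi_*(\omega^{\tensor g})$ into the stated form $H^*(\Od^{\tensor(g+1)})$. Both arguments rest on the same ingredients --- \autoref{rem:kernelsnotunique}, \autoref{rem:triagisdiagpullback}, and the projection formula --- so the difference is one of bookkeeping: the paper's choice of decomposition is tailored so that the answer appears in the form the lemma states, while yours passes through the perhaps more natural intermediate object $\omega^{\tensor g} \in \D(X)$ (which is also what \autoref{prop:PropertyDStateSpaceCalculation} ultimately computes). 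Your closing remark that objects of $\D(\pt)$ over a field are formal, justifying the zero-differential presentation, is a point the paper leaves implicit.
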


\begin{proof}
\begin{figure}
\centering
\begin{tikzpicture}[tqft]
        \node[tqft/pair of pants,draw]   (pants0) {};
        \node[tqft/reverse pair of pants,draw,anchor=outgoing boundary 1] (cpants0) at (pants0.incoming boundary 1) {};
        \node[tqft/cup,draw,anchor=incoming boundary 1] (cup0) at (pants0.outgoing boundary 1) {};
        \node[tqft/cup,draw,anchor=incoming boundary 1] (cup1) at (pants0.outgoing boundary 2) {};

        \node[tqft/pair of pants,draw,anchor=outgoing boundary 1,yshift=2em]   (pantsg) at (cpants0.incoming boundary 1) {};
        \node[tqft/reverse pair of pants,draw,anchor=outgoing boundary 1] (cpantsg) at (pantsg.incoming boundary 1) {};
        
        \path (cpants0.incoming boundary 1) -- node[yshift=0.5em] {$\vdots$} (pantsg.outgoing boundary 1);
        \path (cpants0.incoming boundary 2) -- node[yshift=0.5em] {$\vdots$} (pantsg.outgoing boundary 2);
        \node[tqft/cap,draw,anchor=outgoing boundary 1] (cap0) at (cpantsg.incoming boundary 1) {};
        \node[tqft/cap,draw,anchor=outgoing boundary 1] (cap1) at (cpantsg.incoming boundary 2) {};
        
        \path (cpantsg.incoming boundary 1) ++(2,0) node (v0) {};
        \path (pants0.outgoing boundary 1)  ++(2,0) node (v1) {};
        \draw[decorate,decoration={brace,amplitude=10pt}] (v0) -- (v1) node[midway,xshift=4em] {$g+1$ copies};
\end{tikzpicture}
\caption{A decomposition of the genus $g$ surface}
\label{fig:decomposegsurface}

\end{figure}
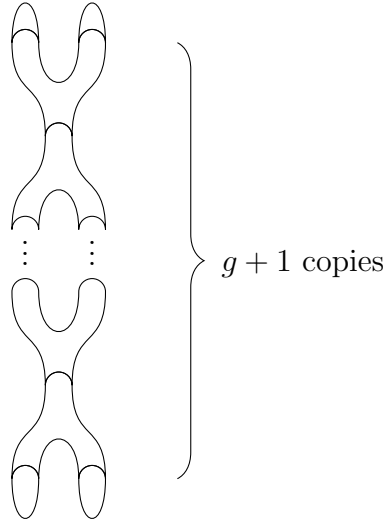

Consider the decomposition of a genus $g$ surface as shown in \autoref{fig:decomposegsurface}. Applying $Z_X$ gives
\[
Z_X(\Sigma_g) = Z_X\left(\bordismtensor[minitqft]{cap}{cap}\right) \comp Z_X\left(\bordismtwo[minitqft]{reverse pair of pants}{pair of pants}\right)^{g+1} \comp Z_X\left(\bordismtensor[minitqft]{cup}{cup}\right) \,.
\]

Now consider the Fourier--Mukai transform $\Phi_{Z_X(\Sigma_g)}$ associated to the kernel $Z_X(\Sigma_g)$. By \autoref{rem:kernelsnotunique}, $Z_X(\Sigma_g) \isomorphic \Phi_{Z_X(\Sigma_g)}(k)$, so to determine $Z_X(\Sigma_g)$ it is sufficient to compute the functor $\Phi_{Z_X(\Sigma_g)}$. Let $\c_1 = Z_X(\bordismtensor[minitqft]{cap}{cap}), \d_1 = Z_X(\bordism[minitqft]{pair of pants}), \d_2 = Z_X(\bordismcopants)$ and $\c_2 = Z_X(\bordismtensor[minitqft]{cup}{cup})$. 
Now
\[ \c_1 = \Ox \boxtensor \Ox = \Oxx \]
and likewise
\[ \c_2 = \Ox \boxtensor \Ox = \Oxx \,; \]
hence the corresponding Fourier--Mukai transforms are
\begin{align*}
\Phi_{\c_1}(\e) &= \pi^{12}_{\emptyset*}(\pi^{12*}_{12}(\e) \tensor \Oxx) = \pi^{12}_{\emptyset*}(\e) \,,\\
\Phi_{\c_2}(\e) &= \pi^{12}_{12*}(\pi^{12*}_{\emptyset}(\e) \tensor \Oxx) = \pi^{12*}_{\emptyset}(\e)\,.
\end{align*}

By \autoref{rem:triagisdiagpullback}, we have $\Phi_{\d_1} = i^1_{12*}$ and similarly $\Phi_{\d_2} = i^{1*}_{12}$. By \autoref{lem:ProjectionFormulaSpecialised},
\[
i^1_{12*} \comp i^{1*}_{12}(\e) \isomorphic i^{1*}_{12}(\e \boxtensor i^1_{12*}(\Ox)) \isomorphic \e \tensor i^1_{12*}(\Ox) \,,
\]
so $\Phi_{\d_1} \comp \Phi_{\d_2}(\e) = \e \tensor i^1_{12*}(\Ox) = \e \tensor \Od$.
Combining these results gives
\begin{align*}
\Phi_{Z(\Sigma_g)}(k) &= \Phi_{\c_1} \comp (\Phi_{\d_1} \comp \Phi_{\d_2})^{g+1} \comp \Phi_{\c_2}(k) \\
	&= \pi^{12}_{\emptyset*}(\pi^{12*}_{\emptyset}(k) \tensor \Od^{\tensor (g+1)}) \\
	&= \pi^{12}_{\emptyset*}(\Od^{\tensor (g+1)}) \,.
\end{align*}
Since the direct image functor with codomain a single point equals the global sections functor, and sheaf cohomology is the derived global sections functor, this expression is exactly $H^*\left(\Od^{\tensor(g+1)}\right)$.
\end{proof}

We now look to give an explicit computation of the derived tensor product of the diagonal sheaf. To do this, we will find a flat resolution of the diagonal. Pragacz, Srinivas and Pati~\cite{PSP2007} give such a resolution in the case where $X$ is a smooth variety which satisfies a certain property.

\begin{definition}\cite{PSP2007} Let $X$ be a smooth variety. We say $X$ has property $(D)$ when there exists a vector bundle $\e$ of rank equal to $\dim(X)$ on $X \times X$ and a section $s$ of $\e$, such that the zero scheme of $s$ is the diagonal subscheme $\Delta \subset X \times X$.
\end{definition}

In particular, if $X$ is a projective surface which is birational to a $K3$ surface with two disjoint rational curves, then $X$ satisfies property $(D)$. The property is also closed under taking fibre products.

\begin{proposition}\label{prop:PropertyDStateSpaceCalculation} Suppose that $X$ is a scheme which has property $(D)$. Then 
\[ Z_X(\Sigma_g) \isomorphic H^*\left(\left({\bigwedge}^* \Omega\right )^{\tensor g}\right) \,, \]
where $\Omega$ denotes the cotangent bundle and $\bigwedge^* \Omega$ is the chain complex with the sheaf $\wedge^i \Omega$ in degree $i$ and zero differential.
\end{proposition}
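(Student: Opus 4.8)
The plan is to combine the previous lemma computing $Z_X(\Sigma_g) \isomorphic H^*(\Od^{\tensor(g+1)})$ with an explicit flat resolution of the diagonal sheaf $\Od$ coming from property $(D)$. If $\e$ is a rank-$n$ vector bundle on $X \times X$ (with $n = \dim X$) admitting a section $s$ whose zero scheme is the diagonal $\Delta$, then the Koszul complex $K^\bullet(s) = (\bigwedge^\bullet \e^\vee, \iota_s)$ is a locally free resolution of $i^1_{12*}(\Ox) = \Od$, since $s$ is a regular section (the zero locus has the expected codimension $n$ because $X$ is smooth and $\Delta$ has codimension $n$). So first I would record that $\Od$ is quasi-isomorphic to this Koszul complex, and hence is already represented by a bounded complex of locally free (in particular flat) sheaves on $X^2$.

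Next I would compute the derived tensor power $\Od^{\tensor(g+1)}$ using this flat resolution. The key simplification is that the restriction of $\e$ to the diagonal is $N_{\Delta/X^2} \isomorphic T X$ (the normal bundle of a smooth diagonal in a smooth square is the tangent bundle), so $\e^\vee|_\Delta \isomorphic \Omega$, the cotangent bundle. A clean way to organise the computation: $\Od \tensor^{\mathbf{L}} \e \isomorphic \Od \tensor \e$ on the nose since $\e$ is locally free, and $\Od \tensor \e \isomorphic i^1_{12*}(i^{1*}_{12}\e) \isomorphic i^1_{12*}(TX)$ by the projection formula (\autoref{lem:ProjectionFormulaSpecialised} or its sheaf version); more generally $\Od \tensor^{\mathbf{L}} \bigwedge^j \e^\vee \isomorphic i^1_{12*}(\bigwedge^j \Omega)$. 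Feeding the Koszul resolution of one factor of $\Od$ into the tensor product, one finds $\Od \tensor^{\mathbf{L}} \Od \isomorphic i^1_{12*}(\bigwedge^* \Omega)$ with zero differential (the Koszul differential $\iota_s$ vanishes after restriction to $\Delta$, since $s|_\Delta = 0$). Iterating, $\Od^{\tensor(g+1)} \isomorphic i^1_{12*}\big((\bigwedge^* \Omega)^{\tensor g}\big)$, again with zero differential, where the $g$ (rather than $g+1$) appears because the last tensor factor of $\Od$ only contributes the pushforward $i^1_{12*}(\Ox)$ and not another copy of $\bigwedge^*\Omega$.

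Finally I would take cohomology. Since $i^1_{12}$ is a closed immersion it is affine, so $H^*(X^2, i^1_{12*}\F) \isomorphic H^*(X, \F)$ for any $\F$ on $X$; applying this to $\F = (\bigwedge^* \Omega)^{\tensor g}$ and using that the differential is zero throughout gives $Z_X(\Sigma_g) \isomorphic H^*\big((\bigwedge^* \Omega)^{\tensor g}\big)$ as a complex with zero differential, which is the claimed formula. Combined with the Dolbeault description $H^q(X, \F) \isomorphic H^q_{\overline\partial}(X, \F)$ and $\bigwedge^* \Omega^\vee \isomorphic \bigwedge^* T^{1,0}X$ (up to the grading convention), this is exactly the Rozansky--Witten state space, which is presumably the point of stating it.

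The main obstacle, and the step requiring the most care, is the bookkeeping of the iterated derived tensor product: one must justify that at each stage exactly one Koszul differential survives as zero and that no higher Tor-terms contribute beyond those already encoded in $\bigwedge^*\Omega$. Concretely, I expect to argue by induction, using that each $\Od$-factor has the explicit locally free Koszul resolution so that all tensor products may be computed underived on the level of complexes, and then invoking $s|_\Delta = 0$ to kill every differential. A secondary point to check carefully is the identification $\e|_\Delta \isomorphic TX$, and hence $\e^\vee|_\Delta \isomorphic \Omega$; this follows from the conormal exact sequence for the smooth closed immersion $\Delta \hookrightarrow X^2$ together with the fact that $\e$ has the minimal possible rank $n = \dim X = \operatorname{codim}\Delta$, forcing the section $s$ to be regular and $\e|_\Delta$ to be the conormal-dual, i.e. the normal bundle $N_{\Delta/X^2} \isomorphic TX$.
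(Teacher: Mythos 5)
Your proposal is correct and follows essentially the same route as the paper: reduce to $\Od^{\tensor(g+1)}$ via the preceding lemma, resolve $\Od$ by the Koszul complex of the section $s$ furnished by property $(D)$, use the projection formula to rewrite the tensor powers as $\Delta_*\Delta^*$, and observe that $\Delta^*$ of the resolution is $\bigwedge^*\Omega$ with zero differential since $s|_\Delta = 0$. The only cosmetic differences are that you organise the tensor power inductively one factor at a time (the paper applies $\Delta^*$ to $\Od^{\tensor g}$ in one step) and you re-derive $\e^\vee|_\Delta \isomorphic \Omega$ from the conormal sequence where the paper cites it.
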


\begin{proof}
For $g = 0$, we have
\[
Z_X(\Sigma_0) = \pi^{12}_{\emptyset*}(i^{1}_{12*}(\Ox)) = \pi^1_{\emptyset*}(\Ox) \isomorphic H^*(\Ox) \,.
\]
Now consider the case $g \geq 1$. Under the assumption that $X$ has property $(D)$, we can find a Koszul resolution of the diagonal sheaf of the form 
\begin{align} \Od \isomorphic C_\bullet = 0 \to \det(\e^*) \to \cdots \to \e^* \xrightarrow{s^*} \Oxx \to 0 \,, \label{eqn:diagresolution} \end{align}
where $\e$ is a locally free sheaf and $s$ is a section of $\e$ which vanishes on the diagonal of $X \times X$. Here, $\e^*$ is the dual sheaf and $s^*\colon \e^* \to \Oxx$ is the dual map given by evaluating at the section $s$.

Since $C_\bullet$ is a complex of free modules, the derived functor $\Delta_X^*$ is the same as applying the non-derived functor $\Delta_X^*$ termwise. From~\cite{PSP2007} we have $\Delta_X^*(\e^*) = \Omega_X$, so
\begin{align}
\Delta_X^* (C_\bullet) = 0 \to \det(\Omega_X) \to \cdots \to \Omega_X \xrightarrow{0} \Ox \to 0 = {\bigwedge}^* \Omega \label{eqn:diagpullback} \,.
\end{align}
Thus
\[ \Delta_X^*(\Od^{\tensor g}) \isomorphic (\Delta_X^*(\Od))^{\tensor g} \isomorphic \left({\bigwedge}^* \Omega\right)^{\tensor g} \,. \]
By \autoref{lem:ProjectionFormulaSpecialised}, we have
\[ \Od^{\tensor g} \tensor \Od \isomorphic \Delta_{X*} \comp \Delta_X^* (\Od^{\tensor g})\,, \]
so
\begin{align*}
\Od^{\tensor (g+1)}
	&\isomorphic \Delta_{X*} \comp \Delta_X^* (\Od^{\tensor g}) \\
	&\isomorphic \Delta_{X*} \left( \left({\bigwedge}^* \Omega\right)^{\tensor g} \right) \,,
\end{align*}
and hence
\[ Z(\Sigma_g) = \pi^{12}_{\emptyset*}(\Od^{\tensor (g+1)}) = \pi^1_{\emptyset*}\left(\left({\bigwedge}^* \Omega\right)^{\tensor g}\right) = H^*\left(\left({\bigwedge}^* \Omega\right)^{\tensor g}\right) \]
as required.
\end{proof}

\begin{corollary}Under the hypothesis of \autoref{prop:PropertyDStateSpaceCalculation}, the state spaces of the TQFT $Z_X$ are isomorphic to those of the Rozansky--Witten TQFT.\end{corollary}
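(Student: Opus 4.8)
The plan is to match the computation of \autoref{prop:PropertyDStateSpaceCalculation}, namely
\[ Z_X(\Sigma_g) \isomorphic H^*\left(\left({\bigwedge}^* \Omega\right)^{\tensor g}\right) \,, \]
against the conjectural Rozansky--Witten formula
\[ Z(\Sigma_g) = \bigdirectsum_{q=0}^{\dim_\mathbb{C}(X)} H^q_{\overline{\partial}}\left(X, (\wedge T^{1,0} X)^{\tensor g}\right) \]
recalled in the introduction, by showing that the two right-hand sides are isomorphic as graded vector spaces.

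First I would unwind the left-hand side. The complex ${\bigwedge}^* \Omega$ has zero differential, hence so does its $g$-fold tensor power, so the hypercohomology $H^*\left(\left({\bigwedge}^* \Omega\right)^{\tensor g}\right)$ splits as the direct sum of the ordinary sheaf cohomology groups of the graded pieces of the complex, each placed in the grading determined by the position of that piece. The underlying sheaf of $\left({\bigwedge}^* \Omega\right)^{\tensor g}$ is $\bigdirectsum_{i_1,\dots,i_g} \wedge^{i_1}\Omega \tensor \cdots \tensor \wedge^{i_g}\Omega$, which is precisely $(\wedge\Omega)^{\tensor g}$ where $\wedge\Omega = \bigdirectsum_p \wedge^p \Omega$ is the full exterior bundle of the cotangent sheaf. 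Thus, as graded vector spaces,
\[ Z_X(\Sigma_g) \isomorphic \bigdirectsum_q H^q\left(X, (\wedge\Omega)^{\tensor g}\right) \,. \]
Since $X$ is projective of dimension $\dim_\mathbb{C}(X)$, Grothendieck's vanishing theorem gives $H^q(X, -) = 0$ for $q > \dim_\mathbb{C}(X)$, so the sum may be truncated at $q = \dim_\mathbb{C}(X)$, which is exactly the range appearing on the Rozansky--Witten side.

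It then remains to identify the sheaves and cohomology theories involved. The Rozansky--Witten invariants are attached to a holomorphic symplectic manifold $X$, and contraction with the symplectic form $\omega$, $v \mapsto \iota_v\omega$, is an isomorphism $T^{1,0}X \isomorphic \Omega$ of holomorphic vector bundles; hence $(\wedge T^{1,0}X)^{\tensor g} \isomorphic (\wedge\Omega)^{\tensor g}$. For a holomorphic vector bundle on a smooth projective variety over $\mathbb{C}$, Dolbeault cohomology coincides with coherent sheaf cohomology by the Dolbeault theorem, and the latter coincides with algebraic coherent cohomology by GAGA; thus $H^q_{\overline{\partial}}\left(X, (\wedge T^{1,0}X)^{\tensor g}\right) \isomorphic H^q\left(X, (\wedge\Omega)^{\tensor g}\right)$ for every $q$. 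Stringing these isomorphisms together yields $Z_X(\Sigma_g) \isomorphic Z(\Sigma_g)$; the case $g = 0$ is covered as well, where both sides reduce to $\bigdirectsum_q H^q(X, \Ox) \isomorphic H^*(\Ox)$, the value already recorded inside the proof of \autoref{prop:PropertyDStateSpaceCalculation}.

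The mathematical content is supplied entirely by \autoref{prop:PropertyDStateSpaceCalculation}; what remains is bookkeeping, and the only point needing care is the set of standing hypotheses on $X$. The step $T^{1,0}X \isomorphic \Omega$ genuinely uses the holomorphic symplectic structure underlying the Rozansky--Witten model — which does hold for the property-$(D)$ examples of interest, such as $K3$ surfaces — and reconciling the grading of the complex ${\bigwedge}^*\Omega$ with the bigrading of the Dolbeault groups by exterior degree and cohomological degree is a harmless relabelling. Neither step is a genuine obstacle.
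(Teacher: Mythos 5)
Your argument is correct and is essentially the comparison the paper intends: the corollary is stated without proof as an immediate consequence of \autoref{prop:PropertyDStateSpaceCalculation} and the Rozansky--Witten formula from the introduction, and your splitting of the zero-differential complex, the identification $T^{1,0}X \isomorphic \Omega$ via the holomorphic symplectic form, and the Dolbeault/GAGA step are exactly the implicit bookkeeping being taken for granted. No substantive gap; the only caveat, which you already flag, is that the compactness/projectivity and holomorphic symplectic hypotheses come from the Rozansky--Witten side rather than from property $(D)$ itself.
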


\section{The affine subcategory \texorpdfstring{$\AffVar$}{AffVar}}\label{sec:AffineSubcategory}

Consider the full subcategory $\AffVar \subset \Var$ whose objects are affine schemes over $k$. Recall that there is a simple description of quasi-coherent sheaves of modules over an affine scheme $\Spec(R)$: they are determined by their global sections, which are exactly $R$-modules. We use this to give an equivalence between the categories $\AffVar$ and the category $\DAlg$ formed of algebras, complexes of bimodules and chain maps (see \autoref{def:DAlg} for the full definition).

\subsection{Non-derived equivalence} \label{sec:AffineSubcategoryNonDerived}

Let $R$ be a $k$-algebra. Recall the global sections functor
\[ \Gamma\colon \Sh(\Spec(R)) \to \Rmod \,\]
sends a sheaf to its global sections. This functor has a quasi-inverse functor
\[ \widetilde{\hphantom{M}}\colon \Rmod \to \Sh(\Spec(R)) \,\]
which sends a module $M$ to the $\O_{\Spec(R)}$-module $\widetilde{M}$. This gives an equivalence of categories~\cite{Hartshorne1977}
\begin{equation}
\begin{tikzcd}
R\catname{-mod}
\arrow[r, "\widetilde{\hphantom{M}}"{name=F}, yshift=1mm] &
\Sh(\Spec(R)) \,,
\arrow[l, "\Gamma"{name=G}, yshift=-1mm]
\end{tikzcd} \label{eqn:GlobalSectionsTildeEquivalence}
\end{equation}
which also gives an equivalence of the derived categories.

It will be useful to translate tensor product functor and the direct and inverse image functors to this algebraic setting.

\begin{lemma}\label{lem:GlobalSectionsDistributeOverTensor} The global sections functor distributes over the tensor product; that is, $\Gamma(\e \tensor \f) \isomorphic \Gamma(\e) \tensor \Gamma(\f)$. Similarly, $\widetilde{M \tensor N} \isomorphic \widetilde{M} \tensor \widetilde{N}$.
\end{lemma}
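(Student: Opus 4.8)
The plan is to establish the second isomorphism, $\widetilde{M \tensor N} \isomorphic \widetilde M \tensor \widetilde N$, and then recover the first by applying $\Gamma$ and invoking the equivalence \autoref{eqn:GlobalSectionsTildeEquivalence}, under which every quasi-coherent $\e$ satisfies $\e \isomorphic \widetilde{\Gamma(\e)}$ and every $R$-module $L$ satisfies $\Gamma(\widetilde L) \isomorphic L$. Write $X = \Spec(R)$. First I would produce the canonical comparison morphism: for each open $U \subset X$ the $\Ox(U)$-bilinear pairing $\widetilde M(U) \times \widetilde N(U) \to (\widetilde M \tensor \widetilde N)(U)$ assembles into a map of presheaves from $U \mapsto \widetilde M(U) \tensor_{\Ox(U)} \widetilde N(U)$ into the sheaf $\widetilde M \tensor \widetilde N$; evaluating at $U = X$ gives an $R$-linear map $M \tensor_R N \to \Gamma(\widetilde M \tensor \widetilde N)$, and applying $\widetilde{(-)}$ together with the counit isomorphism $\widetilde{\Gamma(-)} \isomorphic \Id$ on quasi-coherent sheaves turns this into a morphism of $\Ox$-modules $\theta\colon \widetilde{M \tensor N} \to \widetilde M \tensor \widetilde N$.

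Next I would verify that $\theta$ is an isomorphism by checking it on stalks. At a prime $\mathfrak p \in X$ one has $(\widetilde L)_\mathfrak p \isomorphic L_\mathfrak p = L \tensor_R R_\mathfrak p$, and the stalk of a tensor product of sheaves is the tensor product of the stalks over the local ring; hence $(\widetilde M \tensor \widetilde N)_\mathfrak p \isomorphic M_\mathfrak p \tensor_{R_\mathfrak p} N_\mathfrak p$, while $(\widetilde{M \tensor N})_\mathfrak p \isomorphic (M \tensor_R N)_\mathfrak p$. The map induced by $\theta$ is the standard localization comparison $(M \tensor_R N)_\mathfrak p \to M_\mathfrak p \tensor_{R_\mathfrak p} N_\mathfrak p$, which is an isomorphism because localization commutes with tensor products. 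Since a morphism of sheaves that is an isomorphism on every stalk is an isomorphism, $\theta$ is one; applying $\Gamma$ and $\Gamma\widetilde{(-)} \isomorphic \Id$ then yields $\Gamma(\e \tensor \f) \isomorphic \Gamma(\e) \tensor \Gamma(\f)$ for quasi-coherent $\e, \f$.

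There is no real obstacle here: the whole content is the standard commutative-algebra facts that localization commutes with tensor products and that $\widetilde{(-)}$ and sheaf tensor product have the expected stalks, and the only mild subtlety is pinning down $\theta$ so that the stalk computation identifies exactly the morphism in question. If one prefers to avoid that bookkeeping, an alternative is to observe that both functors $(M,N) \mapsto \widetilde{M \tensor N}$ and $(M,N) \mapsto \widetilde M \tensor \widetilde N$ preserve colimits in each argument (since $\widetilde{(-)}$, $\tensor_R$ and sheaf $\tensor$ all do), that they agree compatibly with $\theta$ on $M = N = R$ where both give $\Ox$, and that every $R$-module is a colimit of copies of $R$; naturality then propagates the isomorphism to all $M$ and $N$. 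Finally, I would remark that the derived version used elsewhere in the paper follows at once: over an affine scheme both $\widetilde{(-)}$ and $\Gamma$ are exact on quasi-coherent sheaves, so they commute with the flat (e.g. free) resolutions that compute the derived tensor product.
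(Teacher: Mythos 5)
Your proof is correct: the canonical comparison map $\widetilde{M \tensor N} \to \widetilde M \tensor \widetilde N$ checked on stalks via the fact that localization commutes with tensor products is the standard argument, and transporting it through the equivalence of \autoref{eqn:GlobalSectionsTildeEquivalence} gives the $\Gamma$-statement. The paper itself offers no proof of this lemma, treating it as a standard fact about affine schemes (it is essentially Hartshorne, Proposition II.5.2), so there is nothing to diverge from; your closing remark that exactness of $\Gamma$ and $\widetilde{(-)}$ on quasi-coherent sheaves over an affine scheme carries the isomorphism to the derived tensor product is a worthwhile addition, since the lemma is applied in the derived setting later in the paper.
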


\begin{definition}\label{def:DirectInverseImageFunctorsRings}
Let $f^\#\colon R \to S$ be a ring morphism, and let $f\colon \Spec(S) \to \Spec(R)$ be the induced morphism of schemes. Define
\begin{align*}
	f^\#_*\colon \Smod &\to \Rmod\,, \\
	M &\mapsto M_R \,,
\end{align*}
where $M_R$ denotes the $S$-module $M$ considered as an $R$-module by $r s = f(r)s$, and
\begin{align*}
	f^{\#*}\colon \Rmod &\to \Smod\,, \\
	N &\mapsto N \tensor_R S \,,
\end{align*}
where $R$ acts on $S$ by $rs = f^\#(r)s$.
\end{definition}

\begin{lemma}\label{lem:AffineImageFunctorsCorrespondance}\cite{Hartshorne1977}
The functors $f^{\#*}$ and $f^\#_*$ correspond to the inverse image and direct image functors respectively; explicitly, we have commutative diagrams of functors
\[
\begin{tikzcd}
\Sh(\Spec(S)) \arrow[r,"f_*"] \arrow[d,"{\Gamma}"] & \Sh(\Spec(R)) \arrow[d,"{\Gamma}"] \\
\Smod \arrow[r, "f^\#_*"] & \Rmod
\end{tikzcd}
\]
and
\[
\begin{tikzcd}
\Sh(\Spec(R)) \arrow[r,"f^*"] \arrow[d,"{\Gamma}"] & \Sh(\Spec(S))\arrow[d,"{\Gamma}"] \\
\Rmod \arrow[r, "f^{\#*}"]  & \Smod  \\
\end{tikzcd} \,.
\]
\end{lemma}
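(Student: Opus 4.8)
The plan is to establish the two squares separately, using the equivalence of categories in \autoref{eqn:GlobalSectionsTildeEquivalence} to translate the statement into pure algebra. For the direct image square I would compute global sections directly. Given a ring map $f^\#\colon R \to S$ and a quasi-coherent sheaf $\mathcal{F}$ on $\Spec(S)$, note first that $f$ is an affine morphism, so $f_*\mathcal{F}$ is again quasi-coherent and the diagram makes sense. Then
\[
	\Gamma\big(\Spec(R), f_*\mathcal{F}\big) = \Gamma\big(f^{-1}(\Spec(R)), \mathcal{F}\big) = \Gamma\big(\Spec(S), \mathcal{F}\big)\,,
\]
and the $\O_{\Spec(R)}$-module structure on $f_*\mathcal{F}$ is obtained from the $\O_{\Spec(S)}$-module structure by restriction along the unit map $\O_{\Spec(R)} \to f_*\O_{\Spec(S)}$, which on global sections is exactly $f^\#\colon R \to S$. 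Hence $\Gamma(f_*\mathcal{F})$ is $\Gamma(\mathcal{F})$ regarded as an $R$-module through $f^\#$, that is, $f^\#_*(\Gamma(\mathcal{F}))$; naturality in $\mathcal{F}$ is immediate. Equivalently one checks $f_*\widetilde{M} \isomorphic \widetilde{M_R}$ by comparing sections over distinguished opens $D(g) \subset \Spec(R)$, using $f^{-1}(D(g)) = D(f^\#(g))$ and the identification of the localization of the $S$-module $M$ at $f^\#(g)$ with the localization of $M_R$ at $g$.

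For the inverse image square I would deduce it from the first by an adjunction argument rather than repeat a sections computation. Since $f^*$ preserves quasi-coherence and, as just noted, $f_*$ does too, the pair $(f^*, f_*)$ restricts to an adjunction between quasi-coherent sheaves on $\Spec(R)$ and on $\Spec(S)$; on the algebraic side, extension of scalars $f^{\#*} = (-)\tensor_R S$ is left adjoint to restriction of scalars $f^\#_*$. The first square says that under the equivalences $\Gamma$ the functor $f_*$ corresponds to $f^\#_*$, and since an equivalence of categories carries adjunctions to adjunctions and adjoints are unique up to canonical isomorphism, the left adjoints $f^*$ and $f^{\#*}$ correspond as well; this is precisely the second square. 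If one prefers an explicit verification, for $N \in \Rmod$ and any quasi-coherent $\mathcal{F}$ on $\Spec(S)$ with $M = \Gamma(\mathcal{F})$ one has
\[
	\Hom(f^*\widetilde{N}, \mathcal{F}) \isomorphic \Hom(\widetilde{N}, f_*\mathcal{F}) \isomorphic \Hom_R(N, M_R) \isomorphic \Hom_S(N \tensor_R S, M) \isomorphic \Hom(\widetilde{N \tensor_R S}, \mathcal{F})\,,
\]
naturally in $\mathcal{F}$ (using the $f^*\dashv f_*$ adjunction, the first square together with full faithfulness of $\Gamma$ on quasi-coherent sheaves, tensor--hom adjunction, and the equivalence again), whence $f^*\widetilde{N} \isomorphic \widetilde{N \tensor_R S}$ by the Yoneda lemma.

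The steps that remain are routine: the localization bookkeeping in the first square, and checking that the displayed isomorphisms are natural. The only point that genuinely needs attention is that $\Gamma$ and $\widetilde{\hphantom{M}}$ are mutually inverse only on quasi-coherent sheaves, so one must invoke that $f$ is affine --- hence that $f_*$ of a quasi-coherent sheaf is quasi-coherent --- before any of these identifications at the level of sheaves of modules is legitimate. Once this is in place there is no real obstacle, which is why the result is standard.
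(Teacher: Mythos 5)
Your proposal is correct and follows the standard route: a direct computation of global sections (equivalently, comparison over distinguished opens) for the pushforward square, and the uniqueness of left adjoints under the equivalence $\Gamma$ for the pullback square. The paper offers no proof of its own, merely citing Hartshorne (Prop.\ II.5.2), where essentially these same arguments appear, so there is nothing to compare beyond noting that your attention to quasi-coherence of $f_*\mathcal{F}$ (via affineness of $f$) is exactly the point that makes the citation legitimate.
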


\subsection{\texorpdfstring{$2$}{2}-categorical equivalence}

Recall\cite{BDSV2015} the symmetric monoidal $2$-category $\Alg$ is defined to have objects $k$-algebras and categories of $1$-morphisms given by
\[ \Hom_\Alg(A, B) = \ABBimod, \]
where $\ABBimod$ is the category of $A$-$B$-bimodules. The composition in this category is given by relative tensor product: given $M \in \Hom_\Alg(A, B)$, $N \in \Hom_\Alg(B, C)$ their composition is $M \comp N = M \tensor_B N$. The monoidal structure is the tensor product over $k$.

Since we are working with derived categories, we want to consider a derived version of this category. Since $\ABBimod$ is equivalent to $(A \tensor_k B^\text{op})\catname{-mod}$, which in turn (since $B$ is commutative) is isomorphic to the category $\ABmod$, we replace $\ABBimod$ with $\DABBimod$.

\begin{definition}\label{def:DAlg}
Let $\DAlg$ be the $2$-category with the same objects as $\Alg$, and with categories of $1$-morphisms
\[ \Hom_\DAlg(A, B) = \DABBimod \,. \]
Vertical composition is the usual composition of morphisms in $\DABBimod$. The composition functor is given by the derived tensor product
\[ \tensor_B\colon \D((B \tensor_k C)\catname{-mod}) \times \D((A \tensor_k B)\catname{-mod}) \to \D((A \tensor_k C)\catname{-mod}) \,, \]
where we view an $(A \tensor_k B)$-module as a $B$-module by the inclusion of algebras $B \hookrightarrow A \tensor_k B$ (and likewise for $(B \tensor_k C)$-modules). The monoidal structure is given by taking the tensor product over $k$.
\end{definition}

We want to use the equivalence of $1$-categories in \autoref{eqn:GlobalSectionsTildeEquivalence} to construct an equivalence of $2$-categories between $\DAlg$ and $\AffVar$. Let $\AlgInclFn\colon \DAlg \to \AffVar$ be defined on objects by $\AlgInclFn(A) = \Spec(A)$. On $\Hom$-categories, define
\begin{align*}
	\AlgInclFn\colon \Hom_\DAlg(A, B)  &\to \Hom_\AffVar(\AlgInclFn(A), \AlgInclFn(B)) 
\end{align*}
to be the derived functor of the (exact) functor $\sheaffunctor$.

\begin{lemma}The map $\Phi$ is a symmetric monoidal $2$-functor.\end{lemma}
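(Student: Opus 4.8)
The plan is to verify each piece of the symmetric monoidal $2$-functor structure on $\AlgInclFn$ by transporting it across the equivalence of $1$-categories $\widetilde{\hphantom{M}}\colon \Rmod \to \Sh(\Spec(R))$ from \autoref{eqn:GlobalSectionsTildeEquivalence}, using the compatibility lemmas already established. First I would check that $\AlgInclFn$ is well-defined as a $2$-functor: on a $1$-morphism $M \in \DABBimod$, recalling that $\ABBimod \isomorphic \ABmod$ since $B$ is commutative, the functor $\sheaffunctor$ produces a sheaf on $\Spec(A \tensor_k B) = \Spec(A) \times \Spec(B)$, which is precisely an object of $\Hom_\AffVar(\Spec A, \Spec B)$; since $\sheaffunctor$ is exact it descends to the derived categories, and on $2$-morphisms (chain maps of bimodules) it acts in the obvious way. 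Functoriality of this assignment on $2$-morphisms is immediate from functoriality of $\sheaffunctor$.

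The main content is compatibility with horizontal composition. I would show that for $M \in \DABBimod$ and $N \in \D((B \tensor_k C)\text{-mod})$ there is a natural isomorphism $\widetilde{M \tensor_B N} \isomorphic \widetilde{M} \fmcomp \widetilde{N}$. Unwinding the Fourier--Mukai composition $\e \fmcomp \f = \pi^{ABC}_{AC*}(i^{124*}_{1234}(\e \boxtensor \f))$ on affine schemes, one uses \autoref{lem:GlobalSectionsDistributeOverTensor} to handle $\boxtensor$ (external tensor product corresponds to tensoring modules over $k$), \autoref{lem:AffineImageFunctorsCorrespondance} to translate $\pi_{AC*}$ and $i^*_{1234}$ into the algebraic operations of \autoref{def:DirectInverseImageFunctorsRings} — restriction of scalars along $A\tensor_k C \hookrightarrow A \tensor_k B \tensor_k C$ and base change along $A \tensor_k B \tensor_k C \to A \tensor_k B \tensor_k B \tensor_k C$ composed with the multiplication-on-the-middle-$B$-factor map — and then observes that the composite of these module operations, applied to $M \tensor_k N$, is exactly the relative tensor product $M \tensor_B N$ (this is the standard presentation of $\tensor_B$ as a coequalizer, i.e. a base change along $B \tensor_k B \to B$). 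The unit comparison is similar but easier: $\AlgInclFn$ sends the identity bimodule $A$ to $\widetilde{A}$ on $\Spec(A)$, and one checks via \autoref{lem:AffineImageFunctorsCorrespondance} that this agrees with the triagonal/diagonal unit sheaf $\O_{\Delta_{\Spec A}} = i^1_{12*}(\O_{\Spec A})$ used in \autoref{prop:ConstructingDoubleCategory}, since $i^1_{12}$ corresponds on rings to the multiplication map $A \tensor_k A \to A$ and $i^1_{12*}$ is restriction of scalars along it.

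For the symmetric monoidal structure, the monoidal product on $\DAlg$ is $\tensor_k$ on objects and $\tensor_k$ on bimodules, while on $\AffVar$ it is $\times = \Spec(- \tensor_k -)$ on objects and $\boxtensor$ on sheaves; the comparison $\widetilde{M} \boxtensor \widetilde{N} \isomorphic \widetilde{M \tensor_k N}$ is again \autoref{lem:GlobalSectionsDistributeOverTensor}, now read on $\Spec A \times \Spec B$. I would then assemble the associator, unitor, braiding and the various coherence $2$-cells from the corresponding data in $\DAlg$, checking that under $\widetilde{\hphantom{M}}$ they map to the associator/unitors/braiding of $\AffVar$ coming from \autoref{prop:ConstructingDoubleCategory}; the coherence diagrams (the monoidal-functor pentagon and hexagon, and the compatibility between the composition-comparison and the monoidal-comparison cells) then commute in $\AffVar$ because they commute in $\DAlg$ and $\widetilde{\hphantom{M}}$ is an equivalence reflecting isomorphisms. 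The main obstacle is the bookkeeping in the horizontal-composition comparison: matching the iterated projection/diagonal pattern $\pi^{ABC}_{AC*} \comp i^{124*}_{1234} \comp (- \boxtensor -)$ against the algebraic description of the relative tensor product requires care with which tensor factors the middle algebra $B$ acts on (the two copies of $B$ indexed by $2$ and $4$ being glued by the diagonal), and verifying this compatibly with the associator so that the comparison is coherent rather than merely a pointwise isomorphism.
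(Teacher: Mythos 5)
Your proposal is correct and follows essentially the same route as the paper: translate the Fourier--Mukai composition into algebraic operations via \autoref{lem:AffineImageFunctorsCorrespondance} and \autoref{lem:GlobalSectionsDistributeOverTensor}, identify the result with the relative tensor product $\tensor_B$, check the unit via the diagonal sheaf corresponding to $A$ as an $(A \tensor_k A)$-module, and handle monoidality via $\Spec(A \tensor_k B) \isomorphic \Spec(A) \times \Spec(B)$. The only cosmetic difference is that the paper makes the derived comparison explicit by passing to free resolutions $M_\bullet, N_\bullet$ and computing $(N_\bullet \tensor_{BC} ABC) \tensor_{ABC} (M_\bullet \tensor_{AB} ABC) \isomorphic (N_\bullet \tensor_B M_\bullet)_{AC}$, where you invoke the coequalizer presentation of $\tensor_B$ instead.
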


\begin{proof}
We will show that $\Phi$ respects the identity morphism and composition of morphisms in $\DAlg$ up to $2$-morphism. For $A \in \DAlg$ and $X = \Spec(A)$, we have 
\[ \Gamma(\Od) = A \,,\]
where $A$ has the structure of an $(A \tensor_k A)$-bimodule where the left and right actions are both multiplication. This is exactly the identity morphism $\Id_A \in \Hom_\DAlg(A, A)$. Hence there is an isomorphism $\Od \isomorphic \widetilde{\Gamma(\Od)} = \widetilde{\Id_A}$.

Now let $M \in \Hom_\DAlg(A, B)$, $N \in \Hom_\DAlg(B, C)$. For brevity of notation, let $AB = A \tensor_k B$ and likewise for $AC, BC$ and $ABC$. Let $X_1 = \Spec(A), X_2 = \Spec(B), X_3 = \Spec(C)$. Following the notation of \autoref{def:DirectInverseImageFunctorsRings}, we have
\begin{align*}
\pi^{123\#}_{12}\colon AB &\to ABC\,, \\
				  a \tensor b &\mapsto a \tensor b \tensor 1_C\,,
\end{align*}
and can calculate $\pi^{123\#}_{13}$ and $\pi^{123\#}_{23}$ similarly. Then 
\begin{align*}
\Gamma\left(\Phi(N) \comp \Phi(M)\right)
	&= \Gamma \comp \pi^{123}_{13*}\left(\pi^{123*}_{23}(\Phi(N)) \tensor \pi^{123*}_{12}(M)\right) \\
	&= \pi^{123\#}_{13*} \left(\pi^{123\#*}_{23}(\Gamma \comp \Phi(N)) \tensor \pi^{123\#*}_{12}(\Gamma \comp \Phi(M))\right) \\
	&= \left((N \tensor_{BC} ABC) \tensor_{ABC} (M \tensor_{AB} ABC) \right)_{AC} \,,
\end{align*}
where in the second line we used \autoref{lem:AffineImageFunctorsCorrespondance} and \autoref{lem:GlobalSectionsDistributeOverTensor}.

Let $N_\bullet$ be a free resolution of $N$ and $M_\bullet$ be a free resolution of $M$ as $ABC$-modules. Then
\[ N_\bullet \tensor_{BC} ABC = N_\bullet \tensor_k A \]
and
\[ M_\bullet \tensor_{AB} ABC = M_\bullet \tensor_k C \,, \]
so
\begin{align*}
\Gamma(\Phi(N) \comp \Phi(M))
	&\isomorphic \left( (N_\bullet \tensor_{BC} ABC) \tensor_{ABC} (M_\bullet \tensor_{AB} ABC) \right)_{AC} \\
	&\isomorphic \left((N_\bullet \tensor_k A) \tensor_{ABC} (M_\bullet \tensor_k C) \right)_{AC} \\
	&= \left(N_\bullet \tensor_B M_\bullet \right)_{AC} \,.
\end{align*}
On the other hand,
\begin{align*}
N \comp M
	&= (N \tensor_B M)_{AC} \\
	&\isomorphic (N_\bullet \tensor_B M_\bullet)_{AC} \,.
\end{align*}

Thus $\Gamma(\Phi(N) \comp \Phi(M)) \isomorphic N \comp M$. Applying $\Phi$ to both sides and using the fact that $\Phi$ and $\psi$ are quasi-inverse, we find $\Phi(N) \comp \Phi(M) \isomorphic \Phi(N \comp M)$ as required.

We now show that $\Phi$ is monoidal. Let $A, B \in \DAlg$. Then
\[ \Phi(A \tensor B) = \Spec(A \tensor B) \,, \]
while
\[ \Phi(A) \tensor \Phi(B) = \Spec(A) \times \Spec(B) \isomorphic \Spec(A \tensor B)  \]
as required. Finally, by \autoref{lem:GlobalSectionsDistributeOverTensor} we find $\widetilde{M \tensor N} \isomorphic \widetilde{M} \tensor \widetilde{N}$.
\end{proof}

We now construct a quasi-inverse to $\AlgInclFn$. Let $\AlgProjFn\colon \AffVar \to \DAlg$ take a scheme to the global sections of its structure sheaf. On morphisms, we take this to be the derived functor of the (exact) functor $\Gamma$.

On objects, $\AlgInclFn \comp \AlgProjFn(X) = \Spec(\Gamma(X)) \isomorphic X$, and $\AlgProjFn \comp \AlgInclFn(A) = A$. Further, the functors $\AlgInclFn_{AB}$ and $\AlgProjFn_{\Phi(A), \Phi(B)}$ define equivalences of categories. This, along with the fact that $\AlgInclFn$ is a $2$-functor, is enough for $\C$ and $\D$ to be equivalent $2$-categories. Formally, we have the following folklore result:

\begin{lemma}Let $\C, \D$ be $2$-categories, with $\Phi\colon \C \to \D$ a $2$-functor. Suppose that $\Phi$ is surjective up to invertible $1$-morphisms; that is, for any $d \in \D$ there is some $c \in \C$, $f \in \Hom_\D(\Phi(c), d)$, $g \in \Hom_D(d, \Phi(c))$ and an invertible $2$-morphism $fg \Rightarrow \Id_d$. Suppose further that
\[ \Phi_{AB} \colon \Hom_\C(A, B) \to \Hom_\D(\Phi(A), \Phi(B)) \]
is an equivalence of categories for all $A, B \in \C$. Then $\C, \D$ are equivalent $2$-categories.
\end{lemma}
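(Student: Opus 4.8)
The plan is to exhibit a pseudofunctor $\Psi\colon \D \to \C$ together with pseudonatural equivalences $\Psi \comp \Phi \simeq \Id_\C$ and $\Phi \comp \Psi \simeq \Id_\D$; this data is precisely what it means for $\C$ and $\D$ to be equivalent as $2$-categories. Note that even though $\Phi$ is strict, its weak inverse $\Psi$ will in general only be a pseudofunctor, so all coherence $2$-cells must be carried along. I would begin with the choices defining $\Psi$ on objects: for each $d \in \D$ fix, using the surjectivity hypothesis, an object $\Psi(d) := c_d \in \C$ and an equivalence $f_d\colon \Phi(c_d) \to d$ in $\D$ with a chosen quasi-inverse $g_d\colon d \to \Phi(c_d)$ and invertible $2$-cells $\eta_d\colon \Id_{\Phi(c_d)} \Rightarrow g_d \comp f_d$ and $\varepsilon_d\colon f_d \comp g_d \Rightarrow \Id_d$. (The one-sided datum in the statement, together with the local equivalences, promotes to such an equivalence; in the application of the lemma it is in fact witnessed by an honest isomorphism $\Phi(c_d) \isomorphic d$.)

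Next I would transport $1$- and $2$-morphisms along these equivalences. Given $h\colon d \to d'$ in $\D$, the composite $g_{d'} \comp h \comp f_d$ lies in $\Hom_\D(\Phi(c_d), \Phi(c_{d'}))$; since $\Phi_{c_d, c_{d'}}$ is essentially surjective I choose $\Psi(h) \in \Hom_\C(c_d, c_{d'})$ with a chosen isomorphism $\theta_h\colon \Phi(\Psi(h)) \isomorphic g_{d'} \comp h \comp f_d$. For a $2$-cell $\alpha\colon h \Rightarrow h'$ I let $\Psi(\alpha)$ be the unique $2$-cell (existence and uniqueness from fullness and faithfulness of $\Phi_{c_d, c_{d'}}$) whose image under $\Phi$ is $\theta_{h'}^{-1} \comp (g_{d'}\,\alpha\, f_d) \comp \theta_h$; faithfulness makes $\Psi$ strictly functorial on $2$-cells vertically. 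The compositor and unitor isomorphisms of $\Psi$ come from the same descent principle: $\Phi(\Psi(h') \comp \Psi(h)) \isomorphic \Phi(\Psi(h')) \comp \Phi(\Psi(h)) \isomorphic g_{d''} h' (f_{d'} g_{d'}) h f_d \isomorphic g_{d''} (h' h) f_d \isomorphic \Phi(\Psi(h' h))$, using $\theta$, strictness of $\Phi$ and $\varepsilon_{d'}$, descends along the full and faithful $\Phi_{c_d, c_{d''}}$ to $\Psi(h') \comp \Psi(h) \isomorphic \Psi(h' h)$; likewise $\Phi(\Psi(\Id_d)) \isomorphic g_d \comp f_d \isomorphic \Id_{\Phi(c_d)} = \Phi(\Id_{c_d})$ (via $\eta_d^{-1}$) descends to $\Psi(\Id_d) \isomorphic \Id_{c_d}$.

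Then I would assemble the two pseudonatural equivalences. For $\Phi \comp \Psi \simeq \Id_\D$ the component at $d$ is the equivalence $f_d\colon \Phi(\Psi(d)) \to d$, and the structure $2$-cell at $h\colon d \to d'$ is built from $\theta_h$; it is invertible since $\theta_h$ and the $f_d$ are. For $\Psi \comp \Phi \simeq \Id_\C$ the component at $c$ is the essentially unique lift $\ell_c\colon \Psi(\Phi(c)) \to c$ of $f_{\Phi(c)}\colon \Phi(c_{\Phi(c)}) \to \Phi(c)$ along the equivalence $\Phi_{c_{\Phi(c)},\, c}$; it is itself an equivalence because $f_{\Phi(c)}$ is and a locally-equivalence $2$-functor reflects equivalences of $1$-morphisms. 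Its structure $2$-cells, and the invertible modifications identifying the round-trip composites with the identity transformations, are obtained by transporting the corresponding (already constructed) data for $\Phi \comp \Psi$ through the local equivalences.

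The bulk of the work — and the main obstacle — is the coherence bookkeeping: one must verify the pseudofunctor axioms for $\Psi$ (the associativity pentagon and the two unit triangles relating the $\theta$-compositors to the unitors) and the pseudonaturality and modification axioms for the four structure transformations just described. No genuinely new idea is required: each such identity, after applying the full and faithful $\Phi_{AB}$ on the relevant hom-category, becomes an identity of $2$-cells in $\D$ or $\C$ that follows formally from the coherence axioms there together with the triangle identities for the chosen equivalences $f_d \dashv g_d$. There are simply many such diagrams to check, which is why the statement is usually quoted as folklore rather than proved in full.
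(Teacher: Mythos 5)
The paper offers no proof of this lemma at all --- it is stated as folklore, with only the remark that it adapts the one-categorical ``fully faithful and essentially surjective'' criterion --- so your write-up supplies something the paper omits. Your construction of the quasi-inverse pseudofunctor $\Psi$ (choose preimage objects with equivalence data, transport $1$- and $2$-cells along $g_{d'} \comp (-) \comp f_d$, descend the compositors and unitors through the full and faithful local functors, and assemble the two pseudonatural equivalences) is the standard and correct proof that a $2$-functor which is biessentially surjective and a local equivalence is a biequivalence, and your assessment that the remaining work is coherence bookkeeping is accurate.

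The one genuine gap is your parenthetical claim that the one-sided datum in the statement ($f$, $g$ and an invertible $2$-cell $fg \Rightarrow \Id_d$ only) ``promotes'' to a two-sided equivalence using the local equivalences. It does not: that datum only exhibits $d$ as a retract of $\Phi(c)$ up to isomorphism, so $g\comp f$ is merely a pseudo-idempotent on $\Phi(c)$, which descends to a pseudo-idempotent on $c$ that need not be isomorphic to the identity. Concretely, let $\C$ be the one-object $2$-category on the monoidal poset $\{0 \le 1\}$ under meet, and let $\D$ adjoin a second object $y$ splitting the idempotent $0$ (all hom-categories involving $y$ terminal, $f \comp g = \Id_y$, $g \comp f = 0$): the inclusion satisfies both hypotheses as literally stated, yet $y$ is equivalent to no object of the image and $\C \not\simeq \D$, since $0 \not\cong \Id_x$. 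So the lemma is false as written, and your argument genuinely requires the stronger hypothesis that both $fg \cong \Id_d$ and $gf \cong \Id_{\Phi(c)}$ hold. This defect is inherited from the paper's formulation rather than introduced by you, and --- as you correctly note --- the only invocation of the lemma is with an honest isomorphism $\Phi(c) \cong d$, where the two-sided data is available and your construction goes through.
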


This is a adaptation of the statement that a functor between $1$-categories is an equivalence if and only if it is fully faithful and essential surjective.

\section{Extended TQFTs valued in \texorpdfstring{$\Var$}{Var}}

\subsection{Affine extended TQFTs}\label{sec:AffineETQFT}

In this section, we will prove the result of \autoref{thm:ETQFTDiscrete} under the additional assumption that $X = Z(S^1)$ is affine and irreducible.

In $\Bord_{1+1+1}$, the cap $\bordism[minitqft]{cap}$ and cup $\bordism[minitqft]{cup}$ bordisms are an adjoint pair. Following the notation of \cite{BDSV2015}, the unit and counit are given by the morphisms $\mu^\dagger$ and $\nu^\dagger$, defined as follows. The morphism
\[
\mu^\dagger \colon
\begin{tikzpicture}[minitqft,baseline=(baselinecoord)]
	\node [tqft/cylinder,tqft/cobordism height=2.4em,draw] (cyl0) {};
	\path [blue,draw] (cyl0.east) to[bend right=30] (cyl0.west);
	\path [blue,dashed,draw] (cyl0.east) to[bend left=30] (cyl0.west);
	\coordinate (baselinecoord) at ([yshift=-1mm]cyl0.west);
\end{tikzpicture}
\to
\begin{tikzpicture}[minitqft,baseline=(baselinecoord)]
	\node[tqft/cup,draw] (cup0) {};
	\node[tqft/cap,draw,anchor=incoming boundary 1] (cup1) at (cup0.outgoing boundary 1)  {};
	\coordinate (baselinecoord) at ([yshift=-1mm]cup0.outgoing boundary 1);
\end{tikzpicture}
\,
\]
is the trace of surgery around the blue circle. The morphism
\[
\nu^\dagger \colon \bordismtwo[minitqft]{cap}{cup} \to \bordismbox[minitqft]
\]
is given by the trace of surgery on the $2$-sphere.

Since these are a unit-counit pair, we have the relation
\begin{equation}
\begin{tikzpicture}[halftqft,baseline={([yshift=0.6em]cap0.incoming boundary 1)}]
	\node [tqft/cylinder,draw,tqft/cobordism height=2.4em] (cyl0) {};
	\node [tqft/cup,draw,anchor=incoming boundary 1] (cap0) at (cyl0.outgoing boundary 1) {};
\end{tikzpicture}
\xrightarrow{\mu^\dagger \comp \bordism[minitqft]{cup}}
\begin{tikzpicture}[halftqft,baseline={([yshift=0.6em]cup1.incoming boundary 1)}]
	\node [tqft/cup,draw] (cup0) {};
	\node [tqft/cap,draw,anchor=incoming boundary 1] (cap0) at ([yshift=2.4em]cup0.outgoing boundary 1) {};
	\node [tqft/cup,draw,anchor=incoming boundary 1] at (cap0.outgoing boundary 1) (cup1) {};
\end{tikzpicture}
\xrightarrow{\bordism[minitqft]{cup} \comp \nu^\dagger}
\begin{tikzpicture}[halftqft,baseline={([yshift=0.6em]cup1.incoming boundary 1)}]
	\node [tqft/cup,draw] (cup0) {};
	\node [tqft/cap,anchor=incoming boundary 1] (cap0) at ([yshift=2.4em]cup0.outgoing boundary 1) {};
	\node [tqft/cup,anchor=incoming boundary 1] at (cap0.outgoing boundary 1) (cup1) {};
	\node [coordinate] (topleft)  at ([xshift=-6pt,yshift=-1.5em]cap0.north) {};
	\node [coordinate] (botright) at ([xshift= 6pt,yshift= 1.2em]cup1.south) {};
	\node [fit=(topleft)(botright),rectangle, draw, dotted] (box0) {};
\end{tikzpicture}
=
\bordism[halftqft]{cup}
\xrightarrow{\Id}
\bordism[halftqft]{cup} \,.
\label{eqn:MuNuComposition}
\end{equation}
where we use whiskering to compose the $1$-morphism $\bordism[minitqft]{cup}$ with the $2$-morphisms $\mu^\dagger$ and $\nu^\dagger$ (recall in the bordism category, this whiskering is performed by taking the product of the $1$-morphism with the unit interval $I$ to give the identity $2$-morphism over it, and then gluing the $2$-morphisms together along their boundary). In particular, $Z(\mu^\dagger) \comp Z(\bordism[minitqft]{cup})$ is a monomorphism.

Suppose $Z$ is a $(1+1+1)$-TQFT such that the induced $(1+1)$-TQFT is $Z_X$ for some $X$. Then there are isomorphisms
\[ Z(\bordism[minitqft]{cylinder}) \isomorphic \Od \]
and
\[ Z(\bordismcapcup) \isomorphic \Oxx \,, \]
so we have
\[ Z(\mu^\dagger) \in \Hom(Z(\bordism[minitqft]{cylinder}), Z(\bordismcapcup)) \isomorphic \Hom(\Od, \Oxx) \,. \]
We will show that $\Hom(\Od, \Oxx) = \setdef{0}$ unless $X$ is a field.

\newcommand{\Fm}{{\mathbb{F}_\m}}
\newcommand{\Fn}{{\mathbb{F}_\n}}
\begin{definition}
For a ring $R$ and a maximal ideal $\m \in \Specm(R)$, let $\Fm = R/\m$ be the residue field. Let $\Ev_{\m}\colon R \to \Fm$ be the map formed by taking the quotient of $R$ by $\m$. Define $\Ev_{\m,\n}\colon R \tensor_k R \to \Fm \tensor \Fn$ by
\[ \Ev_{\m,\n} = (\Id_\Fm \tensor \Ev_\n) \comp (\Ev_m \tensor \Id_R) \,.\]
\end{definition}

\begin{lemma}\label{lem:zeroonepointimpliesdirectsum}
Let $R$ be a ring with $\jacobsonradical(R) = 0$ and $\m \ideal R$ be a fixed maximal ideal. If
\[ J_\m = \bigintersect_{\n \in \Specm(R), \n \neq \m} \n \neq \setdef{0}\,, \]
then $R = \m \directsum J_\m$.
\end{lemma}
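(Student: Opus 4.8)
The plan is to show that under the stated hypotheses, $\m$ and $J_\m$ are comaximal ideals whose intersection is zero, so that the Chinese Remainder Theorem gives $R \isomorphic R/\m \times R/J_\m$ and hence an internal direct sum decomposition $R = \m \directsum J_\m$ as $R$-modules (equivalently, as ideals). First I would verify that $\m \intersect J_\m = \setdef{0}$: by definition $J_\m$ is the intersection of all maximal ideals other than $\m$, so $\m \intersect J_\m = \bigintersect_{\n \in \Specm(R)} \n = \jacobsonradical(R) = 0$ by hypothesis. Next I would show $\m + J_\m = R$. Since $\m$ is maximal, the only ideals containing $\m$ are $\m$ and $R$, so it suffices to show $J_\m \not\subset \m$. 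This is exactly where the hypothesis $J_\m \neq \setdef{0}$ is used: if $J_\m \subset \m$ then $J_\m \subset \m \intersect J_\m = \setdef{0}$, contradicting $J_\m \neq \setdef{0}$. Hence $\m + J_\m = R$.

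With comaximality and trivial intersection established, I would invoke the Chinese Remainder Theorem to conclude that the natural map $R \to R/\m \directsum R/J_\m$ is an isomorphism of rings, and in particular that every element of $R$ is uniquely a sum of an element of $\m$ and an element of $J_\m$. Concretely, comaximality gives $e \in \m$ and $f \in J_\m$ with $e + f = 1$; then for any $r \in R$ we have $r = re + rf$ with $re \in \m$ and $rf \in J_\m$, and uniqueness of this decomposition follows from $\m \intersect J_\m = \setdef{0}$. This is precisely the statement $R = \m \directsum J_\m$.

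I do not anticipate a serious obstacle here; the only subtlety is making sure the hypothesis $\jacobsonradical(R) = 0$ is invoked correctly (it is what forces $\m \intersect J_\m = 0$ rather than merely $\m \intersect J_\m \subset \jacobsonradical(R)$), and that the hypothesis $J_\m \neq \setdef{0}$ is what rules out the degenerate case where $\m$ is the unique maximal ideal or where all other maximal ideals already contain $\m$. Both uses are short. The one thing to be careful about is that CRT as usually stated requires the two ideals to be comaximal; the intersection being zero is then automatic from $\jacobsonradical(R)=0$ combined with comaximality only if we already know $\m \intersect J_\m = \jacobsonradical(R)$, which holds here by the very definition of $J_\m$, so the logic is self-contained.
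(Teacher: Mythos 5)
Your proposal is correct and follows essentially the same route as the paper: both establish $\m \intersect J_\m = \jacobsonradical(R) = 0$ and use $J_\m \neq \setdef{0}$ together with maximality of $\m$ to get $\m + J_\m = R$, whence the direct sum decomposition. The invocation of the Chinese Remainder Theorem is just a named packaging of the final step that the paper carries out directly.
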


\begin{proof}Pick some $r \in J_\m \setminus \setdef{0}$. Then
\[ J_\m \intersect \m = \jacobsonradical(R) =\setdef{0}\,, \]
so $r \not\in \m$. Hence $\m \subsetneq \m + J_\m$, but since $\m$ is a maximal ideal we must have $\m + J_\m = R$. Since $J_\m \intersect \m = \jacobsonradical(R) = \setdef{0}$ we have $R = \m \directsum J_\m$.
\end{proof}

\begin{lemma}\label{lem:PropertiesOfMapsStoD}
Let $R$ be an algebra with $\jacobsonradical(R) = \setdef{0}$. Suppose $R$ cannot be written as a non-trivial direct sum. Let $S = R \tensor_k R$ and let $D$ be the $S$-module with underlying abelian group $R$ and with $r \tensor r' \in S$ acting by multiplication by $rr'$. Then 
\[ \Hom_S(D, S) = \begin{cases} R & R\text{ is a field} \\ \setdef{0} & \text{otherwise} \end{cases} \]
\end{lemma}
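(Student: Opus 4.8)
The plan is to analyze $\Hom_S(D,S)$ by first understanding the annihilator structure of $D$ as an $S$-module and then using it to pin down where any homomorphism $\phi\colon D \to S$ can land. Write $\mu\colon S = R \tensor_k R \to R$ for the multiplication map, $r \tensor r' \mapsto rr'$; this is precisely the $S$-module structure map for $D$, so $D \isomorphic S/\ker\mu$ as $S$-modules, and $\ker\mu$ is the ideal $I$ generated by the elements $r \tensor 1 - 1 \tensor r$. Consequently $\Hom_S(D,S) = \Hom_S(S/I, S) \isomorphic \{ s \in S : I s = 0 \} = \operatorname{Ann}_S(I)$, the annihilator of $I$ in $S$. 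So the task reduces to computing this annihilator and showing it is $R$ (embedded diagonally) when $R$ is a field and $\{0\}$ otherwise.

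First I would handle the field case: if $R = \F$ is a field, then $S = \F \tensor_k \F$ need not be a field, but $D \isomorphic \F$ is a simple $S$-module (it is one-dimensional over $\F$), and a homomorphism $D \to S$ is determined by the image of $1 \in D$, which must be an element $s$ with $(r\tensor 1 - 1 \tensor r)s = 0$ for all $r$; one checks directly that such $s$ form an $\F$-line (spanned by the idempotent cutting out the diagonal component, when it exists, or more carefully, $\operatorname{Ann}_S(I)$ is a copy of $\F$). Then I would turn to the main case. Here the key input is \autoref{lem:zeroonepointimpliesdirectsum}: since $\jacobsonradical(R) = 0$ and $R$ is not a nontrivial direct sum, for every maximal ideal $\m$ we have $J_\m = \bigcap_{\n \neq \m} \n = \{0\}$ (otherwise $R = \m \directsum J_\m$ would be a nontrivial decomposition). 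The idea is to suppose $\phi \neq 0$, let $s = \phi(1) \in \operatorname{Ann}_S(I) \subset S$ be nonzero, and derive a contradiction by evaluating at pairs of maximal ideals. Applying $\Ev_{\m,\n}\colon S \to \Fm \tensor \Fn$ to the identity $(r \tensor 1 - 1 \tensor r)s = 0$ shows that $\Ev_{\m,\n}(s)$ is killed by $(\Ev_\m(r)\tensor 1 - 1 \tensor \Ev_\n(r))$ for all $r \in R$; when $\m \neq \n$ there exists $r$ with $\Ev_\m(r) \neq \Ev_\n(r)$ inside a common extension, and this forces $\Ev_{\m,\n}(s) = 0$ whenever $\m \neq \n$. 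I would then argue that $s$ lying in the kernel of $\Ev_{\m,\n}$ for all $\m \neq \n$, combined with $\jacobsonradical(R) = 0$ (so that $R$ embeds in $\prod_\m \Fm$ and hence $S$ "sees enough" of these evaluations), forces $s$ to be supported only on "diagonal" pairs $(\m,\m)$; but an element supported purely on the diagonal locus, together with $s \in \operatorname{Ann}_S(I)$ and the non-decomposability of $R$, forces either $s=0$ or $R$ to be local with residue field equal to $R$, i.e.\ $R$ a field.

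The step I expect to be the main obstacle is making the "support" argument rigorous: passing from the pointwise vanishing $\Ev_{\m,\n}(s)=0$ for $\m\neq\n$ to an honest algebraic conclusion about $s\in S$. The evaluation maps $\Ev_{\m,\n}$ need not be jointly injective on $S = R\tensor_k R$ (there can be "infinitesimal" or higher-transcendence phenomena, and $\Fm \tensor_k \Fn$ can itself be complicated), so I cannot simply say "$s$ vanishes at all off-diagonal points, hence $s$ is a multiple of a diagonal idempotent." The way I would try to get around this is to localize: for a fixed $\m$, consider the component of $s$ after base-changing the first factor to $R_\m$ or to $\Fm$, use that $D \tensor_R \Fm$ becomes a module over $\Fm \tensor_k R$ whose support (by the off-diagonal vanishing and $\jacobsonradical(R)=0$) is contained in the closed point corresponding to $\m$ in the second factor, and then invoke non-decomposability of $R$ to conclude $R$ is local, hence (being reduced with trivial radical) a field. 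Carrying this out cleanly — rather than through the somewhat delicate tensor-product-of-residue-fields bookkeeping sketched above — is where the real work lies; the rest is formal manipulation of adjunctions and the cited \autoref{lem:zeroonepointimpliesdirectsum}.
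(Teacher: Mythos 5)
Your overall route is the same as the paper's: reduce everything to the element $g_1 = g(1 \tensor 1)$, which must be annihilated by the elements $r \tensor 1 - 1 \tensor r$, then evaluate at pairs of distinct maximal ideals via $\Ev_{\m,\n}$ and try to bring $J_\m$ and \autoref{lem:zeroonepointimpliesdirectsum} into play. However, there are two gaps. The smaller one is in the off-diagonal vanishing step: arguing that some $r$ has $\Ev_\m(r) \neq \Ev_\n(r)$ ``in a common extension'' only tells you that $\Ev_\m(r) \tensor 1 - 1 \tensor \Ev_\n(r)$ is a nonzero element of $\Fm \tensor_k \Fn$, and that ring generally has zero divisors, so you cannot conclude $\Ev_{\m,\n}(g_1) = 0$ from it. The fix is the paper's choice: take $r_0 \in \m \setminus \n$, so the element becomes $-(1 \tensor \Ev_\n(r_0))$ with $\Ev_\n(r_0)$ invertible in the field $\Fn$, hence a unit in $\Fm \tensor \Fn$.

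The larger gap is exactly the step you flag as ``where the real work lies'': your localization sketch does not fill it. The observation that $D \tensor_R \Fm$ is supported at the closed point $\m$ in the second factor is automatic (the second copy of $R$ acts through $\Ev_\m$) and carries no information about $g_1$, and ``indecomposable implies local'' is not a usable implication here. The paper closes the argument as follows: set $g_\m = (\Ev_\m \tensor \Id_R)(g_1) \in \Fm \tensor R$; the off-diagonal vanishing gives $(\Id_\Fm \tensor \Ev_\n)(g_\m) = 0$, i.e.\ $g_\m \in \Fm \tensor \n$ for every $\n \neq \m$, hence $g_\m \in \Fm \tensor J_\m$ (tensoring with a fixed $k$-vector space preserves kernels and intersections). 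Then there is a dichotomy: if for some $\m$ we have $g_\m \not\in \Fm \tensor \m$, then $J_\m \not\subseteq \m$, in particular $J_\m \neq \setdef{0}$, so \autoref{lem:zeroonepointimpliesdirectsum} gives $R = \m \directsum J_\m$, which by indecomposability forces $\m = \setdef{0}$ and $R$ is a field; otherwise $g_\m \in \Fm \tensor (\m \intersect J_\m) = \Fm \tensor \jacobsonradical(R) = 0$ for every $\m$, whence $g_1 \in \bigintersect_\m (\m \tensor R) = \jacobsonradical(R) \tensor R = \setdef{0}$ and $g = 0$. Without this dichotomy (or an equivalent) your argument does not reach the conclusion. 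Finally, your field-case claim via ``the diagonal idempotent'' is only a gesture (for instance, for a purely transcendental extension there is no such idempotent); the paper does not treat that case either, and only the vanishing direction is used later, but as written it should not be presented as checked.
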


\begin{proof}
Say $g \in \Hom_S(D, S)$ and let $g_1 = g(1 \tensor 1)$. Fix maximal ideals $\m, \n \in \Specm(R)$, $\m \neq \n$. Then $\m \setminus \n \neq 0$ (else $\m \subset \n$ and since $\m$ is maximal this would give $\m = \n$) so pick some $r_0 \in \m \setminus \n$. Then $\Ev_\m(r_0) = 0$, $\Ev_\n(r_0) \neq 0$. Now
\[ 0 = (r_0 \tensor 1 - 1 \tensor r_0) \cdot (1 \tensor 1) \in D  \,, \]
so
\[ 0 = g(0) = (r_0 \tensor 1 - 1 \tensor r_0) g_1 \in S \,. \]
Applying $\Ev_{\m,\n}$ gives
\[ 0 = \Ev_{\m, \n}\left((r_0 \tensor 1 - 1 \tensor r_0) g_1\right) = (-1 \tensor \Ev_\n(r_0)) \Ev_{\m,\n}(g_1)\,, \]
and hence $\Ev_{\m, \n}(g_1) = 0$. Let $g_\m = (\Ev_\m \tensor \Id_R)(g_1)$, so $(\Id_\Fm \tensor \Ev_\n)(g_\m) = 0$ and hence $g_\m \in \Fm \tensor \n$. As this holds for all $\n \neq \m$, we have $g_\m \in \Fm \tensor J_\m$.

Suppose first $g_\m \not\in \Fm \tensor \m$. Then there is some $r_\m \in J_\m \setminus \m$ (and in particular $J_\m \neq \setdef{0}$). By \autoref{lem:zeroonepointimpliesdirectsum}, $R$ can be written as a direct sum $R = \m \directsum J_\m$. By assumption this direct sum must be trivial, so $\m = 0$ and $R$ is a field.

Suppose instead $g_\m \in \Fm \tensor \m$ for all $\m \in \Specm(R)$. Then $g_\m \in \Fm \tensor \jacobsonradical(R) = 0$, and hence
\[ g_1 \in \bigcap_{\m \in \Spec(R)} \ker(\Ev_\m \tensor \Id_R) = \bigcap_{\m \in \Spec(R)} \m \tensor R = \jacobsonradical(R) \tensor R = \setdef{0} \]
and so $g = 0$.
\end{proof}

\begin{corollary}\label{lem:HomSetDToS}
Let $X$ be an affine irreducible scheme. Then
\[
\Hom_\AffVar(\Od, \Oxx) = 
\begin{cases}
	\mathbb{F} & X \isomorphic \Spec(\mathbb{F})\text{ for some field }\mathbb{F} \\
	0 & \text{otherwise}
\end{cases}
\]
\end{corollary}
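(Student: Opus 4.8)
The plan is to transport the computation through the equivalence of $2$-categories $\AlgProjFn\colon \AffVar \to \DAlg$ built in \autoref{sec:AffineSubcategory} and then quote \autoref{lem:PropertiesOfMapsStoD}. Write $X = \Spec(R)$ and $S = R \tensor_k R$; here $\Hom_\AffVar(\Od, \Oxx)$ means the $2$-morphisms between the $1$-morphisms $\Od$ and $\Oxx$, that is, $\Hom_{\D(X \times X)}(\Od, \Oxx)$. Since the hom-functor of $\AlgProjFn$ is an equivalence $\Hom_\AffVar(X, X) \to \Hom_\DAlg(R, R) = \D((R \tensor_k R)\catname{-mod})$, hence fully faithful, we get $\Hom_{\D(X \times X)}(\Od, \Oxx) \isomorphic \Hom_{\D((R \tensor_k R)\catname{-mod})}(\AlgProjFn(\Od), \AlgProjFn(\Oxx))$. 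Because $\Gamma$ is exact on affines, $\AlgProjFn$ acts termwise, giving $\AlgProjFn(\Oxx) = S$ (the free rank-one module) and $\AlgProjFn(\Od) = \Gamma(\Od)$, which is $R$ with $r \tensor r'$ acting by multiplication by $rr'$ --- precisely the module $D$ of \autoref{lem:PropertiesOfMapsStoD}, as already noted in the proof that $\AlgInclFn$ is a monoidal $2$-functor. As both objects sit in degree zero, the derived hom collapses to $\Hom_S(D, S)$.

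It then remains to check the hypotheses of \autoref{lem:PropertiesOfMapsStoD} for $R$. As an object of $\Var$, $X$ is smooth and hence reduced; with irreducibility this makes $R$ an integral domain, so $R$ has no non-trivial idempotents and therefore no non-trivial direct sum decomposition as a ring. For $\jacobsonradical(R) = \setdef{0}$ I would use that $R$ is a finitely generated $k$-algebra (the standing finite-type hypothesis on $X$), so $R$ is a Jacobson ring and $\jacobsonradical(R) = \sqrt{(0)} = \setdef{0}$ by reducedness. Then \autoref{lem:PropertiesOfMapsStoD} gives $\Hom_S(D, S) \isomorphic R$ when $R$ is a field and $\Hom_S(D, S) = \setdef{0}$ otherwise; transporting back and noting that $R$ is a field exactly when $X \isomorphic \Spec(\mathbb{F})$ for a field $\mathbb{F}$ (necessarily $\mathbb{F} = R$) yields the stated dichotomy.

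The genuine work is all inside \autoref{lem:PropertiesOfMapsStoD}, so the only delicate point is the reduction step --- in particular, that finiteness is genuinely needed to force $\jacobsonradical(R) = 0$, since otherwise a domain such as a discrete valuation ring would fall outside the hypotheses of that lemma. Everything else is bookkeeping about the equivalence of \autoref{sec:AffineSubcategory} and the identification of $\Od$ and $\Oxx$ with $D$ and $S$.
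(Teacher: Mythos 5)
Your proposal is correct and follows the route the paper intends: the corollary is stated without proof precisely because it is the transport of \autoref{lem:PropertiesOfMapsStoD} through the affine equivalence of \autoref{sec:AffineSubcategory}, identifying $\Gamma(\Od)$ with $D$ and $\Gamma(\Oxx)$ with $S$ and using that both are concentrated in degree zero. Your explicit verification of the lemma's hypotheses (irreducible plus reduced gives a domain, hence no non-trivial decomposition; finite type plus reduced gives $\jacobsonradical(R)=0$ via the Jacobson property) is exactly the bookkeeping the paper leaves implicit.
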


We are now in position to prove \autoref{thm:ETQFTDiscrete} under the assumption that $Z(S^1)$ is affine and irreducible.

\begin{proof}[Partial proof of \autoref{thm:ETQFTDiscrete}] \label{sec:ProofAffineETQFTDiscrete}
Let $R$ be such that $X = \Spec(R)$. Then $Z(\mu^\dagger) \in \Hom_\AffVar(\Od, \Oxx)$. Since $Z(\mu^\dagger) \comp Z(\bordism[minitqft]{cup})$ is the unit of an adjunction, it must be non-zero, so by \autoref{lem:HomSetDToS} $R$ is a field.
\end{proof}

\subsection{Extended TQFTs}

Let $Z$ be a TQFT valued in $\Var$. We say that $Z$ is \emph{based} on the scheme $X = Z(S^1)$. Given such a TQFT, it is natural to ask if we can use this to construct a TQFT based on an open affine subscheme $U \subset X$.

Let $\iota\colon U \hookrightarrow X$. Then we have $(\iota^l)^*\colon \D(X^l) \to \D(U^l)$, and we can use this to construct functors
\[ \iota^{m+n}\colon \Hom_\Var(X^m, X^n) = \D(X^{m+n}) \to \D(U^{m+n}) = \Hom_\Var(U^m, U^n) \,.\] 
However, this need not respect composition of morphisms. For example, take $X = \mathbb{P}^1$ to be the projective line and $U$ to be an affine patch. Let $\e = \Ox \in \Hom(\pt, X)$ and $\f = \Ox \in \Hom(X, \pt)$. Then $(\iota^0)^*(\e \comp \f) = \pi^{1}_{\emptyset*}(\Ox) = k \directsum k$, but $(\iota^1)^*(\Ox) \tensor (\iota^1)^*(\Ox) = \pi^1_\emptyset\O_U = k$.

However, the inverse image functors $\iota^*$ do allow us to partially construct a TQFT based on $U \subset X$. This idea allows us to prove the main theorem.

\begin{theorem}
Let $Z$ be a $(1+1+1)$-TQFT valued in $\Var$ such that the induced $(1+1)$-TQFT corresponds to the Frobenius algebra object in \autoref{prop:diagonaltqft}, where $X = Z(S^1)$. If $X$ is of finite type and reduced, then it must be discrete. In this case, $Z$ is isomorphic to a direct sum of extended TQFTs, each of which sends $S^1$ to a single point.
\end{theorem}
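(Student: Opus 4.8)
The plan is to localise the argument of Section~\ref{sec:AffineETQFT} to a carefully chosen affine open subscheme of $X$, so that \autoref{lem:HomSetDToS} applies even though $X$ is in general neither affine nor irreducible.

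First I would isolate the only property of $Z$ that is needed. As recalled in Section~\ref{sec:AffineETQFT}, the cap and cup are an adjoint pair in $\Bord_{1+1+1}$ with unit $\mu^\dagger$, and the zig--zag identity \autoref{eqn:MuNuComposition} shows that $Z(\mu^\dagger) \comp Z(\mathrm{cup})$ is a split monomorphism in $\Hom_\Var(\pt, X) = \D(X)$; in particular it is non-zero. Since $Z$ truncates to $Z_X$ we have $Z(\mathrm{cup}) \isomorphic \Ox$, $Z(\mathrm{cylinder}) \isomorphic \Od$, and $Z(\mu^\dagger) \in \Hom_{\D(X\times X)}(\Od, \Oxx)$. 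Using the Fourier--Mukai composition law of \autoref{prop:ConstructingDoubleCategory} and the projection formula, the whiskering is identified with
\[
Z(\mu^\dagger) \comp Z(\mathrm{cup}) \isomorphic \pi_{2*}\bigl(Z(\mu^\dagger)\bigr) \colon \Ox \longrightarrow \pi_{2*}(\Oxx),
\]
where $\pi_2 \colon X\times X \to X$ is the second projection (here $\pi_{2*}\Od \isomorphic \Ox$ because $\pi_2$ restricted to the diagonal is the identity). So this last morphism is a non-zero split monomorphism with source $\Ox$.

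Next, suppose for contradiction that $X$ is not discrete; since $X$ is reduced and of finite type this means $\dim X \geq 1$, so some irreducible component $X_0$ has $\dim X_0 \geq 1$. As $X$ has finitely many components, $V := X \setminus \bigcup_{i\neq 0} X_i$ is a non-empty open subscheme of $X$ that is integral of dimension $\geq 1$; I would choose a non-empty affine open $U = \Spec(R) \subseteq V$, so $R$ is a finitely generated reduced $k$-algebra which is a domain but not a field. Write $\iota \colon U \hookrightarrow X$. Applying the exact functor $\iota^*$ to the split monomorphism above and using flat base change along $\pi_2$ (projection morphisms are pull-geolocalising) yields a split monomorphism
\[
\pi_{U*}\bigl((\mathrm{Id}_X \times \iota)^* Z(\mu^\dagger)\bigr) \colon \O_U \longrightarrow \pi_{U*}(\O_{X\times U}), \qquad \pi_U \colon X \times U \to U,
\]
which is non-zero because $\O_U \neq 0$. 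Now $\Od$ is supported on the diagonal, so $(\mathrm{Id}_X\times\iota)^*\Od$ is supported on the graph of $\iota$, which lies in the open subscheme $U\times U \subseteq X\times U$; hence restriction along $U\times U \hookrightarrow X\times U$ induces a bijection
\[
\Hom_{\D(X\times U)}\bigl((\mathrm{Id}_X\times\iota)^*\Od,\, \O_{X\times U}\bigr) \isomorphic \Hom_{\D(U\times U)}\bigl(\O_{\Delta_U},\, \O_{U\times U}\bigr)
\]
sending $(\mathrm{Id}_X\times\iota)^*Z(\mu^\dagger)$ to $(\iota\times\iota)^*Z(\mu^\dagger)$. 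But $U$ is affine, integral, and not the spectrum of a field, so by \autoref{lem:HomSetDToS} the right-hand side is zero; hence $(\mathrm{Id}_X\times\iota)^*Z(\mu^\dagger) = 0$, so the displayed split monomorphism vanishes -- a contradiction. Therefore $X$ is zero-dimensional, hence (being reduced and of finite type) $X \isomorphic \bigsqcup_{i=1}^N \Spec(\mathbb{F}_i)$ for finite field extensions $\mathbb{F}_i / k$; in particular it is discrete.

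For the final assertion I would use that disjoint union of schemes is a biproduct in $\Var$ -- the bifunctor $\Hom_\Var$ turns disjoint unions into products in each slot -- so the idempotents of $\O(X) = \prod_i \mathbb{F}_i$ give an orthogonal family of idempotent $1$-endomorphisms of $Z(S^1)$ summing to the identity and compatible with the structure sheaves $\Od$, $\Odd$ (each supported on the relevant diagonal locus); a standard argument then exhibits $Z$ as a direct sum $\bigoplus_{i=1}^N Z_i$ of extended TQFTs with $Z_i(S^1) = \Spec(\mathbb{F}_i)$ a single point. \emph{The main obstacle} is the localisation step of the previous paragraph: the restriction functors $(\iota^{\times \ell})^*$ do not respect Fourier--Mukai composition in general (witness the $\mathbb{P}^1$ example of Section~\ref{sec:AffineETQFT}), so one must check by hand that they respect the one composite that matters here, the whiskering of $Z(\mu^\dagger)$ by the cup. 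This rests on flat base change for the projections appearing in that whiskering together with the fact that $\Od$ is supported on the diagonal, which is precisely what confines $(\mathrm{Id}_X\times\iota)^*Z(\mu^\dagger)$ -- and hence all the data needed -- to the affine open $U\times U$ where \autoref{lem:HomSetDToS} is available.
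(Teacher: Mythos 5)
Your proposal is correct and follows essentially the same route as the paper: restrict to an irreducible affine open $U \subset X$, use flat base change and the fact that $\Od$ is supported on the diagonal to identify the relevant restriction of $Z(\mu^\dagger)$ with a morphism $\O_{\Delta_U} \to \O_{U \times U}$, invoke \autoref{lem:HomSetDToS}, and derive a contradiction with the non-vanishing of $Z(\mu^\dagger) \comp Z(\mathrm{cup})$ guaranteed by the adjunction zig-zag. The differences are only organisational: you run the argument by contradiction over one carefully chosen integral $U$ rather than over every irreducible affine open, and you additionally sketch the final direct-sum decomposition, which the paper's proof leaves implicit.
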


\begin{proof}
Let $U \subset X$ be an irreducible open affine subset and $\iota\colon U \to X$ be the inclusion map. There is a fibre square
\[
\begin{tikzcd}
	U \arrow[r,"i^1_{12}"] \arrow[d,"\iota"] & U^2\arrow[d,"\iota^2"] \\
	X \arrow[r,"i^1_{12}"] & X^2
\end{tikzcd}
\]
where in the top line we abuse notation and use the same notation for the map $i^1_{12}: U \to U^2$ as the map $i^1_{12}: X \to X^2$. Using the base-change formula, we see
\[ \iota^{2*} \comp i^1_{12} \isomorphic i^1_{12*} \comp \iota^*\,, \]
so $\iota^{2*}(\Od) = i^1_{12*}(\O_U)$. Thus we have
\begin{align*}
\iota^{2*} \comp Z(\mu^\dagger)
	&\in \Hom(i^1_{12*}(\O_U), \O_{U^2}) \\
	&\isomorphic \Hom(Z_U(\bordism[minitqft]{cylinder}), Z_U(\bordismcapcup))
	\,.
\end{align*}
In particular, we are in the situation to apply \autoref{lem:HomSetDToS}: either $U$ is a point, or $\iota^{2*} \comp Z(\mu^\dagger) = 0$.

Let $f = Z(\mu^\dagger)$. Then
\begin{align*}
Z(\mu^\dagger \comp \bordism[minitqft]{cup})
	&= \pi_{2*}\left( \pi_1^*(\Ox) \tensor f\right) \\
	&= \pi_{2*} \left(f\right) \,.
\end{align*}
Since $Z(\mu^\dagger \comp \bordism[minitqft]{cup})$ is a monomorphism, and $\iota^*$ is exact, we see $\iota^* \comp Z(\mu^\dagger \comp \bordism[minitqft]{cup}) = \iota^* \comp \pi_{2*}(f)$ is also a monomorphism.

Let $f = Z(\mu^\dagger) \in \Hom(\Od, \Oxx)$. Suppose that $\iota^{2*}(f) = 0$. Then since $\Od$ is supported on the diagonal, $f$ is determined by a neighbourhood of the diagonal, so $(\Id \times \iota)^*(f) = 0$. There is a fibre diagram
\[
\begin{tikzcd}
	X \times U \arrow[r,"\pi_2"] \arrow[d,"\Id \times \iota"] & U \arrow[d,"\iota"] \\
	X \times X \arrow[r,"\pi_2"] & X \,.
\end{tikzcd}
\]
This gives a natural isomorphism $\pi_{2*} \comp (\Id \times \iota) \isomorphic \iota^* \comp \pi_{2*}$, and hence $\iota^* \comp \pi_{2*}(f) = 0$, giving a contradiction.

Thus we have $\iota^{2*}(f) \neq 0$. Since $U$ is an irreducible affine open, by \autoref{lem:HomSetDToS} we must have $U = \Spec(\mathbb{F})$ for some field $\mathbb{F}$. This holds for any irreducible affine open $U$, so $X$ is union of discrete points as required.
\end{proof}

\subsection{Chain maps of non-zero degree}

A natural modification of the category $\Var$ would be to allow the $2$-morphisms to be formed from chain maps with possibly non-zero degree (that is, elements of $\Ext(\e, \f)$, rather than $\Hom(\e, \f)$). However, the map $\mu^\dagger$ can be seen to have degree $0$ as follows. The composition in \autoref{eqn:MuNuComposition} is the identity, and hence has degree $0$. In particular, the degrees of $\nu^\dagger$ and $\mu^\dagger$ sum to zero. Now
\[ \mu^\dagger \in \Ext(\Od, \Oxx) \]
and since $\Od$ and $\Oxx$ are complexes concentrated in degree $0$, we see that $\mu^\dagger$ has non-negative degree. Similarly, we find
\[ \nu^\dagger \in \Ext(H^*(\Ox), k) \]
and since $k$ is concentrated in degree $0$ and all the terms in $H^*(\Ox)$ are in non-negative degrees we again find that the degree of $\nu^\dagger$ is non-negative. Since the degrees of $\mu^\dagger$ and $\nu^\dagger$ sum to $0$, they must both be zero and we can apply the previous result.

\printbibliography

\end{document}